\documentclass[11pt,leqno]{amsart}

\usepackage{fullpage}
\usepackage{amssymb}
\usepackage{amsmath}
\usepackage{amsxtra}
\usepackage{amscd}
\usepackage{graphicx}
\usepackage{amsfonts}
\usepackage{pb-diagram}

\usepackage{float}
\usepackage{enumerate}
\usepackage{dsfont}
\usepackage{multirow}
\usepackage{color}
\usepackage{rotating}
\usepackage{url}
\usepackage{hyperref}
\usepackage[utf8]{inputenc}
\usepackage[all]{xy}
\usepackage{tikz}
\usetikzlibrary{arrows, decorations.markings}
\usepackage[british,UKenglish,USenglish,english,american]{babel}
\usepackage{bicaption}

\textheight 210mm
\textwidth 140mm

\setlength{\topmargin}{0cm}
\setlength{\textheight}{22cm}
\setlength{\textwidth}{16cm}
\setlength{\oddsidemargin}{-0.1cm}
\setlength{\evensidemargin}{-0.1cm}

\makeatletter
\let\@wraptoccontribs\wraptoccontribs
\makeatother
\definecolor{darkgreen}{rgb}{0,0.4,0.1}
\hypersetup{
   colorlinks=true,       	
   linkcolor=red,          
   citecolor=darkgreen,    
   filecolor=magenta,      
   urlcolor=blue			
}

\def\leq{\leqslant}
\def\geq{\geqslant}

\numberwithin{equation}{section}
\newtheorem{thm}{Theorem}

\newtheorem{cor}{Corollary}
\newtheorem{prop}{Proposition}

\newtheorem{exmp}{Example}

\newtheorem{rem}{Remark}

\def\Z{{\mathbb Z}}

\def\N{{\mathbb N}}

\def\YH{{\rm Y}_{d,n}}

\begin{document}

\title[New skein invariants of links]
  {New skein invariants of links}

\author{Louis H.~Kauffman}
\address{Department of Mathematics, Statistics and Computer Science,
University of Illinois at Chicago,
851 South Morgan St., Chicago IL 60607-7045,  U.S.A.}
\email{kauffman@math.uic.edu}
\urladdr{http://www.}

\author{Sofia~Lambropoulou}
\address{Departament of Mathematics,
National Technical University of Athens,
Zografou campus, GR-157 80 Athens, Greece.}
\email{sofia@math.ntua.gr}
\urladdr{http://www.math.ntua.gr/~sofia}

\keywords{classical links, Yokonuma--Hecke algebras, mixed crossings, Reidemeister moves, stacks of knots, Homflypt polynomial, Kauffman polynomial,  Dubrovnik polynomial, skein relations, skein invariants, 3-variable link invariant, closed combinatorial formulae.}

\subjclass[2010]{57M27, 57M25}

\thanks{ The authors acknowledge with pleasure valuable conversations with Akinori John Nakaema and Aristides Kontogeorgis about the structure of the proof of Proposition~\ref{orderxings}. 
 This research  has been co-financed by the European Union (European Social Fund - ESF) and Greek national funds through the Operational Program ``Education and Lifelong Learning" of the National Strategic Reference Framework (NSRF) - Research Funding Program: THALES: Reinforcement of the interdisciplinary and/or inter-institutional research and innovation, MIS: 380154. L. K. is pleased to thank the Simons Foundation, USA, for partial support for this research under his Collaboration Grant for Mathematicians (2016 - 2021), Grant number 426075. }


\date{}

\begin{abstract}
We introduce new skein invariants of  links based on a procedure where we first apply the skein relation only to crossings of distinct components, so as to produce collections of unlinked knots. We then evaluate the resulting knots using a given invariant.  A skein invariant can be computed on each link solely by the use of skein relations and a set of initial conditions.  The new procedure, remarkably, leads to  generalizations of the known skein invariants. We make skein  invariants  of classical links, $H[R]$, $K[Q]$  and $D[T]$, based on the invariants of knots, $R$, $Q$  and $T$, denoting the regular isotopy version of the Homflypt polynomial, the Kauffman polynomial and the Dubrovnik polynomial. 
We provide skein theoretic proofs of the well-definedness of these invariants. 
These invariants  are also reformulated into summations of the generating invariants ($R$, $Q$, $T$) on sublinks of a  
 given link $L$, obtained by partitioning $L$ into collections of sublinks. 
 \end{abstract}

\maketitle

\setcounter{tocdepth}{1}
\begin{center}
\begin{minipage}{10cm}
{\scriptsize\tableofcontents}
\end{minipage}
\end{center}

\section*{Introduction} \label{s:intro}

Skein theory was introduced by John Horton Conway in his remarkable paper \cite{Conway} and in numerous conversations and lectures that Conway gave in the wake of
publishing this paper. He not only discovered a remarkable normalized recursive method to compute the classical Alexander polynomial, but he formulated a generalized invariant
of knots and links that is called {\it skein theory}. In the general form of this theory  an ambient isotopy class of an oriented knot or link diagram $K$ is represented by placing brackets around it
as in $\{K\}$ and is described by Conway as a three-dimensional ``room" containing the knot or link $K.$ If $K_{+}, K_{-}, K_{0}$ denote three diagrams that differ at the site of one crossing, with a positive crossing at $K_{+},$ a negative crossing at $K_{-}$ and a smoothed crossing at $K_{0}$ (this relationship is described in detail below), then Conway writes 
$$
\{ K_{+} \} = \{ K_{-} \} \oplus \{ K_{0} \}
$$ 
where it is understood that $\oplus$ is a non-associative operation that brings together two diagrams (in the indicated isotopy types) and makes them equivalent to a single diagrammatic isotopy type as indicated by the equation. Similarly one writes
$$\{ K_{-} \} = \{ K_{+} \} \ominus \{ K_{0} \}.$$ A knot or link can be decomposed into an expression in these operations acting on rooms filled only with unknots and unlinks.
Such an expression is called a skein decomposition. We call two links $L$ and $L'$ {\it skein equivalent} if they have identical skein decompositions. It is an open problem to this day to determine the skein equivalence class of a given link.

Conway gave examples of specific representations of the skein. The most famous of these is the representation, called the Conway-Alexander polynomial, that behaves as follows: $$\nabla(A \oplus B) = \nabla(A) + z \nabla(B)$$ and $\nabla$ is equal to $1$ on the unknot and $0$ on unlinks of more than one component. Later, in the wake of the 
discovery of the Jones polynomial other representations of the skein were discovered, including the famous Homflypt polynomial and the Kauffman polynomial defined for a related 
unoriented skein that had not been defined by Conway.

\smallbreak

In this paper we introduce new {\it link skein theories}  in the form of specific procedures for handling the skein equivalence. We only allow 
$$
\{ L_{+} \} = \{ L_{-} \} \oplus \{ L_{0} \}$$or $$\{ L_{-} \} = \{ L_{+} \} \ominus \{ L_{0} \}
$$ 
when the arcs that are switched are on different link components. This means that the ``bottom" of our new skein consists in rooms filled with disjoint unions of possibly non-trivial knots. We then ask {\it what is the link skein equivalence class of a link}?
We do not know the answer in general, but we do generalize the Homflypt and Kauffman polyomials to this link skein theory and show that our new invariants are indeed stronger than the
original skein polynomials.

A study such as ours and the general formulation of new skein theories is of interest to knot theorists and to topologists generally. The idea of skein decompositon and equivalence  can be carried beyond classical knot theory (This will be the subject of other papers that we write.) and the specific constructions in the present paper will be a guide to further exploration. The reader may want to compare these remarks with the constructions of Seongjeong Kim  in his paper that uses the Conway algebra \cite{Kim}.

 More precisely, in this paper we introduce generalized skein invariants of links, $H[R]$, $K[Q]$  and $D[T]$, based on the regular isotopy version of the 2-variable Jones or Homflypt polynomial, the Kauffman polynomial and the Dubrovnik polynomial, respectively. By a {\it skein invariant} we mean that it can be computed on each link solely by the use of certain specific linear skein relations and a set of initial conditions.  The invariant $H[R]$ was originally discovered via Yokonuma--Hecke algebra traces and was called $\Theta$ \cite{chjukala}. We give here, for the first time, a direct skein theoretic treatment of $\Theta$ (as $H[R]$). It was further  discovered by Lickorish that  $\Theta$ on a given link can be expressed by a summation over products of Homflypt evaluations of sublinks \cite[Appendix]{chjukala}. We give   new generalizations of the Kauffman and Dubrovnik polynomials by skein theory and by  Lickorish-type formulae (see \S~\ref{closedforms}). This paper, formulating the invariants in pure skein theory and combinatorics, provides a foundation for this research and, we hope, a springboard for further interaction with knot algebras.

In making a skein theory for an invariant of knots and links one uses a linear  formula that involves a crossing in a diagram, the switched version of that crossing and other replacements for the crossing. The key idea in our skein theory for the invariants in this paper is to order skein operations so that crossings between different components are given precedence in the order of switching.  These constructions alter the philosophy of classical skein-theoretic techniques, whereby mixed as well as self-crossings in a link diagram would get indiscriminantly switched.  In the present approach one first unlinks all components using the skein relation of  a known skein invariant and then one evaluates  that skein invariant on  unions of unlinked knots, at the same time introducing a new variable. Such a skein decomposition is indicated for a simple 2-component link in Figure~\ref{skeindecomp}, where $U$ indicates the unknot and $3_1$ a trefoil knot.

\smallbreak
\begin{figure}[!ht]
\begin{center}
\includegraphics[width=6cm]{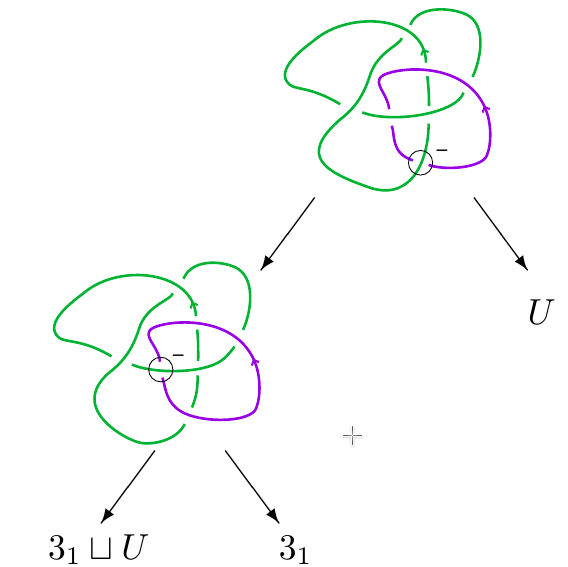}
\caption{A skein decomposition to the component knots}
\label{skeindecomp}
\end{center}
\end{figure}

The invariant $H[R]$ uses the Homflypt skein relation  for crossings between different components and the Homflypt polynomial for evaluating on  knots. More precisely, we abstract the skein relation of the  regular isotopy version of the Homflypt polynomial,   denoted  $R$, and use it as the basis of a new skein algorithm comprising two computational levels : on the first level we only apply the skein relation between {\it mixed crossings}, that is, crossings of different components, so as to produce {\it unions of unlinked knots}. On the second level we evaluate the invariant $H[R]$ on unions of unlinked knots, by applying a new rule,  which is based on the evaluation of  $R$ on unions of unlinked knots and which introduces a new variable. The invariant $R$ is evaluated on unions of unlinked knots through its evaluation  on individual knots. 
 Thus the invariant $H[R]$ generalizes the Homflypt polynomial. 
  This method generalizes the skein invariants Homflypt and Kauffman (Dubrovnik) to new invariants of links.

We note that there are few known skein invariants  in the literature for classical knots and links. Skein invariants include: the Alexander--Conway polynomial \cite{al,co}, the Jones polynomial \cite{jo1}, and the Homflypt polynomial \cite{oc,jo2, LM,HOMFLY,PT}, which specializes to both the Alexander--Conway and the Jones polynomial; there is also the bracket polynomial \cite{kau2}, the Brandt--Lickorish--Millett--Ho polynomial \cite{BLM}, the Dubrovnik polynomial and the Kauffman polynomial \cite{kau4}, which specializes to both the bracket and the Brandt--Lickorish--Millett--Ho polynomial. Finally, we have the  Juyumaya--Lambropoulou  family of invariants $\Delta_{d,D}$ \cite{jula2}, and the analogous Chlouveraki--Juyumaya--Karvounis--Lambropoulou family of invariants  $\Theta_d(q, \lambda_d)$ and their generalization $\Theta(q, \lambda, E)$ \cite{chjukala}, which specializes to the Homflypt polynomial. One avenue for further research is to extend the approach of the present paper to the area of skein modules and invariants of links in three-manifolds, and invariants of three-manifolds (see for example \cite{Tu,P,HK,La1,La2,GM,DL3,chpa2,fljula}). 

	\smallbreak
	
Let $\mathcal{L}$ denote the set of classical oriented link diagrams. Let also $L_+$ be an oriented diagram with a positive crossing specified and let $L_-$ be the same diagram but with that crossing switched. Let also $L_0$ indicate the same  diagram but with the smoothing which is compatible with the orientations of the emanating arcs in place of the crossing, see (\ref{triple}). The diagrams $L_+, L_-, L_0$  comprise a so-called {\it oriented Conway triple}. 

\begin{equation}\label{triple}
\raisebox{-.1cm}{\begin{tikzpicture}[scale=.2]
\draw [line width=0.35mm]  (-1,-1)-- (-0.22,-0.22);
\draw  [line width=0.35mm](-1,1)--(0,0);
\draw  [line width=0.35mm] (0.22,0.22) -- (1,1)[->];
\draw [line width=0.35mm]   (0,0) -- +(1,-1)[->];
\end{tikzpicture}}
\qquad \qquad
 \raisebox{-.1cm}{\begin{tikzpicture}[scale=.2]
\draw  [line width=0.35mm] (-1,-1)-- (0,0) ;
\draw [line width=0.35mm] (-1,1)--(-0.22,0.22);
\draw [line width=0.35mm] (0,0) -- (1,1)[->];
\draw [line width=0.35mm]   (0.22,-0.22) -- +(.8,-.8)[->];
\end{tikzpicture}} 
\qquad \qquad
\raisebox{-.1cm}{\begin{tikzpicture}[scale=.2, mydeco/.style = {decoration = {markings, 
                                                       mark = at position #1 with {\arrow{>}}}
                                       }]
\draw [line width=0.35mm, postaction = {mydeco=.6 ,decorate}] plot [smooth, tension=2] coordinates { (-1,.8) (0, 0.5) (1,.8)};
\draw [line width=0.35mm, postaction = {mydeco=.6 ,decorate}] plot [smooth, tension=2] coordinates { (-1,-.8) (0, -0.5) (1,-.8)};
\end{tikzpicture}}
\end{equation}
\[ 
L_+ \qquad \qquad L_- \qquad  \qquad L_0
\]

\noindent We then prove the following:

\begin{thm} \label{hofr}
Let $H (z,a)$ denote the regular isotopy version of  the Homflypt polynomial and let $R (w,a)$ denote  the same invariant but with a different indeterminate $w$ in place of $z$.  Then there exists a unique regular isotopy invariant of classical oriented links $H[R]: \mathcal{L} \rightarrow \Z[z, w, a^{\pm 1}, E^{\pm 1}]$, where $z, \, w , \, a$ and $E$ are indeterminates, defined by the following rules:
\begin{enumerate}
\item On crossings involving different components the following mixed skein relation holds for an oriented Conway triple:
$$
H[R](\includegraphics[scale=0.5]{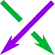}) - H[R](\includegraphics[scale=0.5]{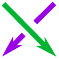}) = z \, H[R](\includegraphics[scale=0.5]{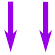}),
$$
\item For a union of $r$ unlinked knots, ${\mathcal K}^r := \sqcup_{i=1}^r K_i$, with $r \geq 1$, it holds that:
$$
H[R]({\mathcal K}^r) =  E^{1-r} \, R({\mathcal K}^r).
$$
\end{enumerate}
\end{thm}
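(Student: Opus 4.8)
The plan is to establish existence and uniqueness by a skein-theoretic induction adapted to the two-level structure of the definition. Uniqueness is the easy half: rules (1) and (2) are algorithmic, so once one shows that any diagram can be reduced to unions of unlinked knots by applying (1) only at mixed crossings, any invariant satisfying the rules is forced to take the value computed by that reduction. Hence the whole content is existence, i.e. showing that the reduction procedure (Level~1) is well defined, independent of all choices, and yields a regular isotopy invariant.

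For the reduction I would fix an ordering of the components of a diagram $D$ together with a base point on each, traverse the components in order, and at each mixed crossing encountered for the first time apply rule (1) to switch it, if necessary, into the descending pattern. Since the indeterminate $z$ is attached only to mixed crossings while self-crossings are never touched at this level, the variable $w$ of $R$ is never produced here and stays separate. Switching a crossing via $H[R](L_\pm) = H[R](L_\mp) \pm z\,H[R](L_0)$ produces a term $L_\mp$ with the same number of components but one fewer bad mixed crossing, and a smoothed term $L_0$ in which two components merge, hence with one fewer component. Inducting lexicographically on (number of components, number of bad mixed crossings), the recursion terminates in a $\Z[z,w,a^{\pm 1},E^{\pm 1}]$-linear combination of descending diagrams; each such diagram is a union of unlinked knots, on which the value is $E^{1-r}R(\cdot)$ by rule (2), with $R$ computed directly on the diagram since it is already a regular isotopy invariant whose value on a split union factors through the individual components.

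The core of the argument is to prove that this output is independent of the choices, i.e. that the algorithm is confluent. I would establish, in order: (i) independence of the order in which the bad mixed crossings are switched---this is the content of Proposition~\ref{orderxings}, obtained by showing that switching two mixed crossings commutes up to $L_0$-terms already controlled by the induction; (ii) independence of the base points, by checking that sliding a base point past a crossing changes nothing, with self-crossings absorbed by the invariance of $R$ and mixed crossings handled by rule (1); and (iii) independence of the ordering of the components, which reduces to transposing two adjacent components. Step (iii) is where the normalization in rule (2) is essential: $R(\mathcal{K}^r)$ is blind to the stacking order of the unlinked knots, and the factor $E^{1-r}$ is precisely the weight that makes reversing which knot lies on top consistent with the mixed skein relation---this is the new feature that separates $H[R]$ from the ordinary Homflypt polynomial and that forces the extra variable $E$.

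It then remains to verify invariance under the Reidemeister~II and III moves of regular isotopy (R1 is deliberately excluded, the curl contributions being recorded by $a$ inside $R$). Moves supported in a single component are inherited directly from the invariance of $R$; moves involving strands of distinct components are handled by expanding with rule (1) and reducing each resulting term, comparing the two sides by means of the independences already established. I expect the main obstacle to be the confluence step, and specifically Proposition~\ref{orderxings} together with the compatibility of the $E$-normalization with the skein relation, since this is exactly the point at which the freedom introduced by restricting rule (1) to mixed crossings must be shown to be consistent.
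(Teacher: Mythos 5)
Your overall architecture coincides with the paper's: a Lickorish--Millett style double induction, with confluence established through independence of the switching sequence, of the basepoints, and of the component ordering, followed by verification of the skein relation and of Reidemeister~II and~III invariance; uniqueness is indeed the trivial half. You also correctly locate the crux in Proposition~\ref{orderxings} and in the compatibility of the $E$-normalization with the two-level structure. But at exactly that crux there is a missing idea, and the mechanism you state does not close the argument.

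You assert that exchanging two mixed-crossing switches ``commutes up to $L_0$-terms already controlled by the induction.'' That is true only when the two mixed crossings $i$ and $j$ involve \emph{different} pairs of components: then in each smoothed diagram the other crossing is still mixed, rule~(1) can be applied to it, and the residual terms cancel pairwise by the inductive hypothesis. When $i$ and $j$ lie between the \emph{same} pair of components --- the generic situation already for a two-component link --- smoothing one of them fuses those components, and the other crossing becomes a \emph{self}-crossing of the merged component. Rule~(1) is then unavailable for it, and the inductive hypothesis only gives well-definedness of each smoothed diagram separately; it does not relate the two smoothed-at-$j$ diagrams arising from the two orders, which are genuinely different links differing by a self-crossing switch. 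The paper resolves this by descending to the second level: after the remaining switches the smoothed diagrams are descending stacks, rule~(2) converts their $H[R]$-values into a \emph{common} factor $E^{2-c}$ times $R$-values, and the skein relation of $R$ itself --- in its own variable $w$, applied at the now-self-crossings --- together with the well-definedness of $R$ produces the cancellation. This is precisely where the hypothesis that $R$ satisfies the same form of skein relation is used intrinsically, and it is the step your proposal omits. The same mechanism is needed for the component-reordering step: what must be shown is not merely that $R$ ignores the stacking order of unlinked knots, but that the two sums of smoothed terms $(X)$ and $(Y)$ produced by unlinking $A$ from $B$ in the two orders agree; the paper obtains $(X)=(Y)$ by rerunning the whole switching sequence at the level of $R$ with variable $w$, using $R(AB)=R(BA)$, and lifting the identity through the uniform factor $E^{2-c}$. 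Without these two $R$-level arguments the confluence proof does not close, so they are the essential content still to be supplied rather than routine bookkeeping.
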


We recall that the invariant $R(w,a)$ is determined by the following rules:
\begin{enumerate}
\item[(R1)] For $L_+$, $L_-$, $L_0$ an oriented Conway triple, the following skein relation holds:
$$
R(L_+) - R(L_-) = w \, R(L_0),
$$
\item[(R2)] The indeterminate $a$ is the positive curl value for $R$: 
$$
R ( \raisebox{-.1cm}{
\begin{tikzpicture}[scale=.2]
\draw [line width=0.35mm]  (-.7,-.7)-- (-0.22,-0.22);
\draw  [line width=0.35mm ](-.7,.7)--(0,0);
\draw  [line width=0.35mm] (0.22,0.22) -- (.7,.7)[->];
\draw [line width=0.35mm]   (0,0) -- +(.7,-.7)[->];
 \draw [line width=0.35mm] plot [smooth, tension=2] coordinates { (-.7,.7) (0,1.3) (.7,.7)};
\end{tikzpicture}}
\ ) = a \, R (  
\raisebox{.06cm}{
\begin{tikzpicture}[scale=.2, mydeco/.style = {decoration = {markings, 
                                                       mark = at position #1 with {\arrow{>}}}
                                       }]
 \draw [line width=0.35mm, postaction = {mydeco=.6 ,decorate}] plot [smooth, tension=2] coordinates {(0,0) (1,.2) (2,0)};
 \end{tikzpicture}}\ ) 
\quad \mbox{and} \quad
 R (
\raisebox{-.1cm}{\begin{tikzpicture}[scale=.2]
\draw  [line width=0.35mm] (-.7,-.7)-- (0,0) ;
\draw [line width=0.35mm] (-.7,.7)--(-0.22,0.22);
\draw [line width=0.35mm] (0,0) -- (.7,.7)[->];
\draw [line width=0.35mm]   (0.22,-0.22) -- +(.6,-.6)[->];
 \draw [line width=0.35mm] plot [smooth, tension=2] coordinates { (-.7,.7) (0,1.3) (.7,.7)};
\end{tikzpicture}}
) = a^{-1} \, R (\raisebox{.06cm}{
\begin{tikzpicture}[scale=.2, mydeco/.style = {decoration = {markings, 
                                                       mark = at position #1 with {\arrow{>}}}
                                       }]
 \draw [line width=0.35mm, postaction = {mydeco=.6 ,decorate}] plot [smooth, tension=2] coordinates {(0,0) (1,.2) (2,0)};
 \end{tikzpicture}}\ ),
$$
\item[(R3)] On the standard unknot:
$$
R(\bigcirc) = 1. 
$$
Hence, $H[R](\bigcirc) = 1$.  We also recall that the above defining rules imply  the following:
\item[(R4)] For a diagram of the unknot, $U$, $R$ is evaluated by taking:
$$
R(U) = a^{wr(U)},
$$ 
where $wr(U)$ denotes the writhe of $U$ --instead of 1 that is the case in the ambient isotopy category. 
\item[(R5)] $R$ being the Homflypt polynomial, it is multiplicative on a union of unlinked knots, ${\mathcal K}^r := \sqcup_{i=1}^r K_i$. Namely, for $\eta := \frac{a - a^{-1}}{w}$ we have:  
$$
R({\mathcal K}^r) = \eta^{r-1} \Pi_{i=1}^r  R(K_i). 
$$
\end{enumerate}

	Furthermore, as we show in \S~\ref{secphi}, Theorem~\ref{theta_linking_P}, for an oriented link  $L$ on $n$ components, we have:
\begin{equation} \label{lickh} 
H[R](L) = \left( \frac{z}{w} \right)^{n-1} \sum_{k=1}^n \eta^{k-1} \widehat{E}_k \sum_\pi R(\pi L)
\end{equation} 
where the second summation is over all partitions $\pi$ of the components of $L$ into $k$ (unordered) subsets and $R(\pi L)$ denotes the product of the Homflypt polynomials of the $k$ sublinks of $L$ defined by $\pi$. Also, $\widehat{E}_k = (\widehat{E}^{-1} - 1)(\widehat{E}^{-1} - 2) \cdots (\widehat{E}^{-1} - k + 1)$, with $\widehat{E} = E \frac{z}{w}$ and $\widehat{E}_1 =1$. 
 The reader should note that the formula (\ref{lickh}) above (the right hand side) is, by its very definition, a regular isotopy invariant of the link $L$. This follows from the regular isotopy invariance of $R$ and the well-definedness of summing over all partitions of the link $L$ into $k$ parts. In fact the summations $I_{k}(L) = \sum_\pi R(\pi L)$, where $\pi$ runs over all partitions of $L$ into $k$ parts, are each regular isotopy invariants of $L$. What is remarkable here is that these all assemble into the new invariant $H[R](L)$ with its striking two-level skein relation. We see from this combinatorial formula how  the extra strength of $H[R](L)$ comes from its ability to detect non-triviality  of certain sublinks of the link $L$. The fact that $H[R]$ is in fact stronger than $R$ is documented in \S~\ref{sectheta}, based on \cite{chjukala}, for the invariant $\Theta$, and in \S~\ref{secphi}.

Assuming Theorem~\ref{hofr} one can  compute $H[R]$ on any given oriented link diagram $L$ by applying the following procedure:  the skein rule (1) of Theorem~\ref{hofr} can be used to give an evaluation of $H[R](L_+)$ in terms of $H[R](L_-)$ and $H[R](L_0)$ or of $H[R](L_-)$ in terms of $H[R](L_+)$ and $H[R](L_0)$. We choose to switch mixed crossings so that the switched diagram is more unlinked than before. Applying this principle recursively we obtain a sum with polynomial coefficients and evaluations of $H[R]$ on  unions of unlinked knots. These knots are formed  by the mergings of components caused by the smoothings in the  skein relation (1). To evaluate $H[R]$ on a given union of unlinked knots we then use the invariant $R$ according to rule (2) of Theorem~\ref{hofr}. Note that the appearance of the indeterminate $E$ in  rule (2) for $H[R]$ is the critical difference between  $H[R]$ and $R$.
 Finally, formula	(R5) above allows evaluations  of the invariant $R$  on individual knotted components   and knowledge of $R$ provides the basis for this. 

  For proving Theorem~\ref{hofr} one must prove that the resulting evaluation is independent of the choices made and invariant under regular isotopy moves. A good guide for this is the skein-theoretic proof of Lickorish--Millett of the well-definedness of the Homflypt polynomial \cite{LM}, so we will be following in principle \cite{LM} with the necessary  adaptations and modifications, taking for granted the well-definedness of $R$. 	The difference here lies in modifying the original skein method which bottoms out on unlinks --since self-crossings are not distinguished from mixed crossings-- to the present context, where the evaluations bottom out on unions of unlinked 	knots.   Then the main difficulty lies in proving independence of the computations under the sequence of mixed crossing switches (see Proposition~\ref{orderxings}). Take for example the case where two mixed crossings need  to be switched, which are shared between the same components, as abstracted in Figure~\ref{ijjifig}. Then, switching the first  causes  the second one to become a self-crossing in the smoothed diagram, so we cannot operate on it. Changing now the order of switches results in different diagrams. In order to compare the polynomials before and after one needs first to reach down to the unlinked components of the diagrams and then apply the skein relation of the invariant $R$ on the self-crossings in question. The complexity of the argument depends on the number of mixed crossings shared by the same components that need to be switched.	
	
\smallbreak
\begin{figure}[!ht]
\begin{center}
\includegraphics[width=14cm]{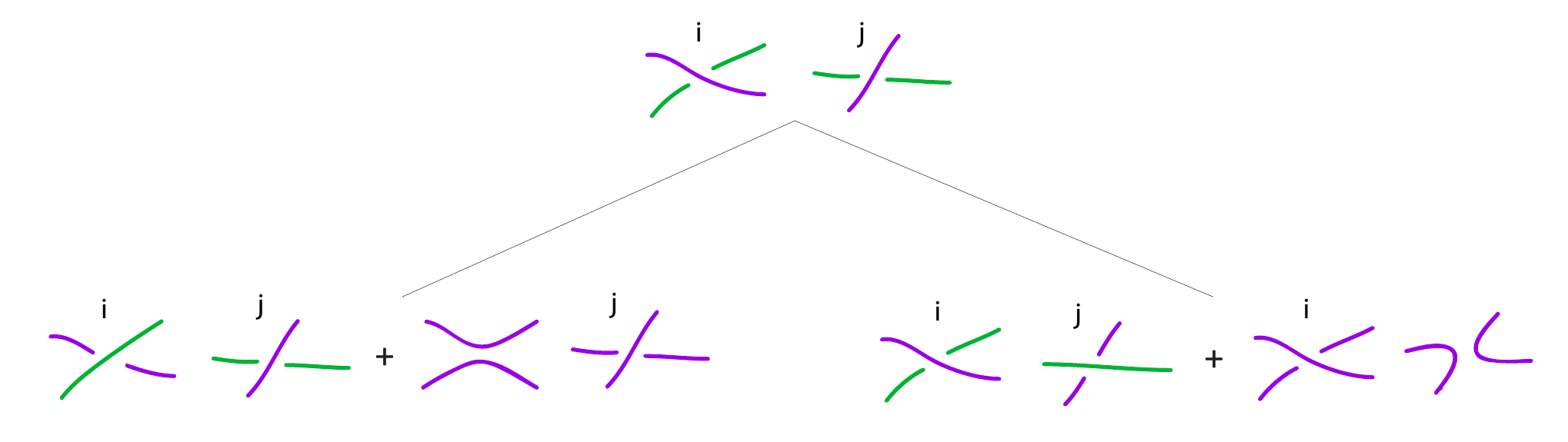}
\caption{Changing the sequence of switching mixed crossings}
\label{ijjifig}
\end{center}
\end{figure}
	
Our motivation for the above generalization  $H[R]$ of the invariant $R$ is the following: 
In \cite{chjukala} the ambient isotopy invariants $\Theta_d(q,\lambda_d)$ for classical links, which were originally derived from a Markov trace on the Yokonuma--Hecke algebras \cite{jula2}, are recovered via a skein relation of the Homflypt polynomial, $P$, that can only apply to mixed crossings of a link. 
  The invariants $\Theta_d$  are then compared to $P$ and are shown  to be  {\it distinct} from $P$ {\it on links}.  The invariants  $\Theta_d$ are also distinct from the  Kauffman polynomial, since they are  {\it topologically equivalent} to $P$ {\it on knots} \cite{chmjakala, chjukala}. Moreover, in  \cite{chjukala}  the family of invariants $\{\Theta_d\}_{d\in\N}$, which includes $P$ for $d=1$, is generalized to a new 3-variable skein link invariant $\Theta(q,\lambda,E)$, which specializes to each $\Theta_d$ for $E=1/d$ and which is stronger than $P$, as proved in \cite{chjukala} and detailed in \S~\ref{sectheta}. The invariant $H[H]$ is the regular isotopy counterpart of  $\Theta$.  Theorem~\ref{hofr} provides a new self-contained skein theoretic proof of the existence of the invariant $\Theta$ of \cite{chjukala}, which is missing in the literature. 
		In \cite[Appendix B]{chjukala} W.B.R. Lickorish provides a closed combinatorial formula (Eq.~\ref{lickorish}) for the definition of the invariant $\Theta$, showing that it is a mixture  of Homflypt polynomials and linking numbers of sublinks of a given link.  A succinct exposition of the above can be found in \cite{kala}.	These  constructions opened the way to new research directions, cf. \cite{jula1,jula2,jula3,jula4,jula5,gou,gojukola,ChPou1,gojukolaf,ChPou2,chla,chpa1,chmjakala,japa,chjukala,goula1,ju2,goula2,pawa,AJ1,AJ2,AJ3,chpa2,fljula}.

	Theorem~\ref{hofr} implies that, if we wish, we could specialize the $z$, the $w$, the $a$ and the $E$ in any way we wish. For example, if $a=1$ then $R$ specializes to the Alexander--Conway polynomial \cite{al,co}. If $w= \sqrt{a} - 1/\sqrt{a}$ then $R$ becomes the unnormalized Jones polynomial \cite{jo1}. In each case $H[R]$ can be regarded as a generalization of that polynomial. Furthermore, we denote by $H[H]$ the invariant $H[R]$ in the case where $w=z$.  The invariant $H[H]$ still generalizes $H$ to a new 3-variable invariant for {\it links}.  Indeed, in this case $H[H]$ is the regular isotopy version of the new 3-variable link invariant $\Theta (q, \lambda, E)$ \cite{chjukala}. Our 4-variable invariant $H[R]$ is in fact topologically equivalent to the 3-variable invariant $H[H]$. This fact  was observed by K.~Karvounis \cite{ka2}  by modifying our previous version of  the Lickorish-type combinatorial formula (\ref{lickh}).  
	 Thus we now know that all the invariants in this paper are equivalent to 3-variable invariants, even though we have formulated them using four variables. In  \cite{kaula2} we use the 3-variable formulation exclusively. The 4-variable formulation is, nevertheless, useful for separating the two types of skein operations. Namely, switching crossings between different components and evaluating on knots.

\smallbreak

We now consider the class $\mathcal{L}^u$ of unoriented link diagrams. For any crossing of a diagram of a link in  $\mathcal{L}^u$, if we swing the overcrossing arc counterclockwise it sweeps two regions out of the four. If we join these two regions, this is the $A$-smoothing of the crossing, while joining the other two regions gives rise to the $B$-smoothing. We shall say that a crossing is of {\it positive type} if it produces a horizontal $A$-smoothing and that it is of {\it negative type} if it produces a vertical $A$-smoothing. 
Let now $L_+$ be an unoriented diagram  with a positive type crossing specified and let $L_-$ be the same diagram but with that crossing switched. Let also $L_0$ and $L_{\infty}$ indicate the same  diagram but with the $A$-smoothing and the $B$-smoothing in place of the crossing. See (\ref{quadruple}).  The diagrams $L_+, L_-, L_0, L_{\infty}$  comprise a so-called {\it unoriented Conway quadruple}. 

\begin{equation}\label{quadruple}
 \raisebox{-.1cm}{\begin{tikzpicture}[scale=.2]
\draw [line width=0.35mm]  (-1,-1)-- (-0.22,-0.22);
\draw  [line width=0.35mm ](-1,1)--(0,0);
\draw  [line width=0.35mm] (0.22,0.22) -- (1,1);
\draw [line width=0.35mm]   (0,0) -- +(1,-1);
\end{tikzpicture}} 
\qquad \qquad
 \raisebox{-.1cm}{\begin{tikzpicture}[scale=.2]
\draw  [line width=0.35mm] (-1,-1)-- (0,0) ;
\draw [line width=0.35mm] (-1,1)--(-0.22,0.22);
\draw [line width=0.35mm] (0,0) -- (1,1);
\draw [line width=0.35mm]   (0.22,-0.22) -- +(.8,-.8);
\end{tikzpicture}}  
\qquad \qquad
\raisebox{-.07cm}{\begin{tikzpicture}[scale=.2, mydeco/.style = {decoration = {markings, 
                                                       mark = at position #1 with {\arrow{>}}}
                                       }]
\draw [line width=0.35mm] plot [smooth, tension=2] coordinates { (-1,.8) (0, 0.5) (1,.8)};
\draw [ line width=0.35mm] plot [smooth, tension=2] coordinates { (-1,-.8) (0, -0.5) (1,-.8)};
\end{tikzpicture}} 
\qquad  \qquad
\raisebox{-.1cm}{\begin{tikzpicture}[scale=.2]
 \draw [ line width=0.35mm] plot [smooth, tension=2] coordinates { (-1,-1) (-0.3, 0) (-1,1)};
 \draw [ line width=0.35mm] plot [smooth, tension=2] coordinates { (1,-1) (0.3, 0) (1,1)};
 \end{tikzpicture}} 
\end{equation}
\[ 
L_+ \qquad \qquad L_- \qquad  \qquad L_0 \qquad  \qquad L_{\infty}
\]

 By similar arguments as for Theorem~\ref{hofr} we also prove in this paper the existence of 4-variable generalizations of the regular isotopy versions of the Dubrovnik and the Kauffman polynomials \cite{kau4}. Namely:

\begin{thm} \label{doft}
Let $D (z,a)$ denote the regular isotopy version of the Dubrovnik polynomial and let $T (w,a)$ denote the same invariant but with a different parameter $w$ in place of $z$.  Then there exists a unique regular isotopy invariant of classical unoriented links $D[T]:  \mathcal{L}^u \rightarrow \Z[z, w, a^{\pm 1}, E^{\pm 1}]$, where $z, \, w , \, a$ and $E$ are indeterminates, defined by the following rules:
\begin{enumerate}
\item On crossings involving different components the following skein relation holds for an unoriented Conway quadruple:
$$
D[T] (\includegraphics[scale=0.5]{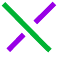}) - D[T] (\includegraphics[scale=0.5]{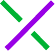}) = z \, \big( D[T] (\includegraphics[scale=0.5]{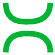}) - D[T] (\includegraphics[scale=0.5]{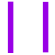}) \big),
$$
\item For a  union of $r$ unlinked knots in $\mathcal{L}^u$, ${\mathcal K}^r := \sqcup_{i=1}^r K_i$, with $r \geq 1$, it holds that:
$$
D[T] ({\mathcal K}^r) =  E^{1-r} \, T ({\mathcal K}^r).
$$
\end{enumerate}
\end{thm}

We recall that the invariant $T(w,a)$ is determined by the following rules:
\begin{enumerate}
\item[(T1)] For $L_+$, $L_-$, $L_0$, $L_{\infty}$ an unoriented Conway quadruple, the following skein relation holds:
$$
T (L_+) - T (L_-) = w \, \big( T (L_0) -  T (L_{\infty}) \big),
$$
\item[(T2)] The indeterminate $a$ is the positive type curl value for $T$: 
$$
T ( \raisebox{-.1cm}{
\begin{tikzpicture}[scale=.2]
\draw [line width=0.35mm]  (-.7,-.7)-- (-0.22,-0.22);
\draw  [line width=0.35mm ](-.7,.7)--(0,0);
\draw  [line width=0.35mm] (0.22,0.22) -- (.7,.7);
\draw [line width=0.35mm]   (0,0) -- +(.7,-.7);
 \draw [line width=0.35mm] plot [smooth, tension=2] coordinates { (-.7,.7) (0,1.3) (.7,.7)};
\end{tikzpicture}}
\ ) = a \, T (  
\raisebox{.06cm}{
\begin{tikzpicture}[scale=.2, mydeco/.style ]
 \draw [line width=0.35mm, postaction = {mydeco=.6 ,decorate}] plot [smooth, tension=2] coordinates {(0,0) (1,.2) (2,0)};
 \end{tikzpicture}}\ ) 
\quad \mbox{and} \quad
 T (
\raisebox{-.1cm}{\begin{tikzpicture}[scale=.2]
\draw  [line width=0.35mm] (-.7,-.7)-- (0,0) ;
\draw [line width=0.35mm] (-.7,.7)--(-0.22,0.22);
\draw [line width=0.35mm] (0,0) -- (.7,.7);
\draw [line width=0.35mm]   (0.22,-0.22) -- +(.6,-.6);
 \draw [line width=0.35mm] plot [smooth, tension=2] coordinates { (-.7,.7) (0,1.3) (.7,.7)};
\end{tikzpicture}}
) = a^{-1} \, T (
\raisebox{.06cm}{
\begin{tikzpicture}[scale=.2, mydeco/.style ]
 \draw [line width=0.35mm, postaction = {mydeco=.6 ,decorate}] plot [smooth, tension=2] coordinates {(0,0) (1,.2) (2,0)};
 \end{tikzpicture}}\ ),
$$
\item[(T3)] On the standard unknot:
$$
T(\bigcirc) = 1. 
$$
So,  $D[T](\bigcirc) = 1$.  We also recall that the above defining rules imply  the following:
\item[(T4)] For a diagram of the unknot, $U$, $T$ is evaluated by taking 
$$
T(U) = a^{wr(U)},
$$ 
\item[(T5)] $T$, being  the  Dubrovnik  polynomial,  it is multiplicative on a union of unlinked knots, ${\mathcal K}^r := \sqcup_{i=1}^r K_i$. Namely, for $\delta := \frac{a - a^{-1}}{w} + 1$ we have:  
$$
T ({\mathcal K}^r) = \delta^{r-1} \Pi_{i=1}^r T (K_i). 
$$
\end{enumerate} 

\noindent Furthermore, as for $H[R]$, a closed combinatorial Lickorish-type formula exists for $D[T]$. Namely, as we prove in \S~\ref{secxi}, Theorem~\ref{thmxi}, on any unoriented link $L$ we have  :
\begin{equation} \label{lickd} 
D[T](L) = (\frac{z}{w})^{n-1}\sum_{k=1}^n \delta^{k-1}\widehat{E_k} \sum_\pi T(\pi L).
\end{equation} 
All terms are analogous to the terms involved in (\ref{lickh}).

The Dubrovnik polynomial, $D$, is related to  the Kauffman polynomial, $K$, via the following translation formula, observed by W.B.R~Lickorish~\cite{kau4}:
\begin{equation} \label{ktod}
D(L)(a,z) = (-1)^{c(L)+1} \, i^{-wr(L)}K(L)(ia,-iz).
\end{equation} 
Here, $c(L)$ denotes the number of components of $L$, $i^2 = -1$, and $wr(L)$ is the {\it writhe} of $L$ for some choice of orientation of $L$, which is defined as the algebraic sum of all crossings of $L$.  The translation formula is independent of the particular choice of orientation for
$L$. Our theory generalizes also the regular isotopy version of the Kauffman polynomial \cite{kau4} through the following:

\begin{thm} \label{kofq}
Let $K (z,a)$ denote the regular isotopy version of the Kauffman polynomial and let $Q (w,a)$ denote the same invariant but with a different parameter $w$ in place of $z$.  Then there exists a unique regular isotopy invariant of classical unoriented links $K[Q]:  \mathcal{L}^u \rightarrow \Z[z, w, a^{\pm 1}, E^{\pm 1}]$, where $z, \, w , \, a$ and $E$ are indeterminates, defined by the following rules:
\begin{enumerate}
\item On crossings involving different components the following skein relation holds for an unoriented Conway quadruple:
$$
K[Q]  (\includegraphics[scale=0.5]{crossing_pos_u.pdf})  + K[Q] (\includegraphics[scale=0.5]{crossing_neg_u.pdf}) = z \, \big( K[Q] (\includegraphics[scale=0.5]{crossing_zero_u.pdf}) +  K[Q] (\includegraphics[scale=0.5]{crossing_infty_u.pdf}),
$$
\item For a union of $r$ unlinked knots in $\mathcal{L}^u$, ${\mathcal K}^r := \sqcup_{i=1}^r K_i$, with $r \geq 1$, it holds that:
$$
K[Q]({\mathcal K}^r) =  E^{1-r} \, Q({\mathcal K}^r).
$$
\end{enumerate}
\end{thm}

We recall that the invariant $Q(w,a)$ is determined by the following rules:
\begin{enumerate}
\item[(Q1)] For $L_+$, $L_-$, $L_0$, $L_{\infty}$ an unoriented Conway quadruple, the following skein relation holds:
$$
Q (L_+) + Q (L_-) = w \, \big( Q (L_0) +  Q (L_{\infty}) \big),
$$
\item[(Q2)] The indeterminate $a$ is the positive type curl value for $Q$: 
$$
Q ( \raisebox{-.1cm}{
\begin{tikzpicture}[scale=.2]
\draw [line width=0.35mm]  (-.7,-.7)-- (-0.22,-0.22);
\draw  [line width=0.35mm ](-.7,.7)--(0,0);
\draw  [line width=0.35mm] (0.22,0.22) -- (.7,.7);
\draw [line width=0.35mm]   (0,0) -- +(.7,-.7);
 \draw [line width=0.35mm] plot [smooth, tension=2] coordinates { (-.7,.7) (0,1.3) (.7,.7)};
\end{tikzpicture}}
\ ) = a \, Q (  
\raisebox{.06cm}{
\begin{tikzpicture}[scale=.2, mydeco/.style ]
 \draw [line width=0.35mm, postaction = {mydeco=.6 ,decorate}] plot [smooth, tension=2] coordinates {(0,0) (1,.2) (2,0)};
 \end{tikzpicture}}\ ) 
\quad \mbox{and} \quad
 Q (
\raisebox{-.1cm}{\begin{tikzpicture}[scale=.2]
\draw  [line width=0.35mm] (-.7,-.7)-- (0,0) ;
\draw [line width=0.35mm] (-.7,.7)--(-0.22,0.22);
\draw [line width=0.35mm] (0,0) -- (.7,.7);
\draw [line width=0.35mm]   (0.22,-0.22) -- +(.6,-.6);
 \draw [line width=0.35mm] plot [smooth, tension=2] coordinates { (-.7,.7) (0,1.3) (.7,.7)};
\end{tikzpicture}}
) = a^{-1} \, Q (
\raisebox{.06cm}{
\begin{tikzpicture}[scale=.2, mydeco/.style ]
 \draw [line width=0.35mm, postaction = {mydeco=.6 ,decorate}] plot [smooth, tension=2] coordinates {(0,0) (1,.2) (2,0)};
 \end{tikzpicture}}\ ),
$$
\item[(Q3)] On the standard unknot:
$$
Q(\bigcirc) = 1. 
$$
So, $K[Q](\bigcirc) = 1$.  We also recall that the above defining rules imply  the following:
\item[(Q4)] For a diagram of the unknot, $U$, $Q$ is evaluated by taking 
$$
Q(U) = a^{wr(U)},
$$ 
\item[(Q5)] $Q$, being  the Kauffman polynomial,  it is multiplicative on a union of unlinked knots, ${\mathcal K}^r := \sqcup_{i=1}^r K_i$. Namely, for $\gamma := \frac{a + a^{-1}}{w} - 1$ we have:  
$$
Q({\mathcal K}^r) = \gamma^{r-1} \Pi_{i=1}^r Q(K_i). 
$$
\end{enumerate} 

\noindent Furthermore, in \S~\ref{secpsi}, Theorem~\ref{thmpsi} we prove that $K[Q]$ on an unoriented link $L$ satisfies the following closed combinatorial Lickorish-type formula:
\begin{equation} \label{lickk}
K[Q] (L) = i^{wr(L)}  (\frac{z}{w})^{n-1} \sum_{k=1}^n \gamma^{k-1}\widehat{E_k} \sum_\pi i^{-wr(\pi L)}Q(\pi L). 
\end{equation}
All terms are analogous to the terms involved in (\ref{lickd}).

\smallbreak
In Theorems~\ref{doft} and~\ref{kofq} the basic invariants $T(w,a)$ and $Q(w,a)$ could be replaced by specializations of the Dubrovnik and the Kauffman polynomial respectively and, then, the invariants $D[T]$ and $K[Q]$ can be regarded as generalizations of these specialized polynomials.  For example, if $a=1$ then $Q(w,1)$ is the Brandt--Lickorish--Millett--Ho polynomial \cite{BLM} and if $w= A+A^{-1}$ and $a= -A^3$ then $Q$ becomes the Kauffman bracket polynomial \cite{kau2}. In both cases the invariant $K[Q]$ generalizes these polynomials. Furthermore, a formula analogous to (\ref{ktod}) relates the  generalized invariants $D[T]$ and $K[Q]$, see (\ref{psitoxi}). In these theorems we have formulated the invariants $D[T]$ and $K[Q]$ as 4-variable invariants. As with $H[R]$,  $D[T]$ and $K[Q]$ are topologically equivalent to the 3-variable invariants $D[D]$ and $K[K]$, via the combinatorial formulae of the invariants given in \S~\ref{generalregkd}. The 4-variable formulation is useful for keeping track of the two types of skein operations.

\begin{rem} \rm 
Since the Lickorish-type combinatorial formulae for $H[R]$, $D[T]$ and $K[Q]$  are themselves link invariants and we prove by induction that they satisfy the corresponding two-tiered skein relations, these combinatorial formulae can be used as a mathematical basis for $H[R]$, $D[T]$ and $K[Q]$. We have chosen to work out the skein theory of the generalized invariants from first principles, in order to investigate how skein theory applies to these new invariants. However, a reader of this paper may wish to  read the proofs of the Lickorish-type formulae and understand the skein relations on that basis.
\end{rem}

In \cite{kaula2} we define associated state sum models for the new invariants. These state sums are based on the skein template algorithm for the Homflypt, Kauffman and Dubrovnik polynomials. Our state sums use the skein calculation process for the invariants, but have a new property in the present context. They have a double level due to the combination in our invariants of a skein calculation combined with the evaluation of a specific invariant on the knots that are at the bottom of the skein process. If we choose a state sum evaluation of a different kind for this specific invariant, then we obtain a double-level state sum of our new invariant. This is articulated in \cite{kaula2} and we speculate in \cite{kaula2}  about possible applications for these ideas. See \S~\ref{directions}. In \cite{kaula2} we discuss the context of statistical mechanics 
models and partition functions in relation to multiple level state summations. 

\smallbreak

 The paper is organized as follows: In \S~\ref{sectheta} we detail on the algebraic construction of the new skein invariants $\Theta_d$ and $\Theta$.  
 In \S~\ref{generalregh}  we place the regular isotopy counterparts of the invariants $\Theta_d$ and $\Theta$ in a more general skein-theoretic context and we produce a full skein theory and a 4-variable invariant $H[R]$ generalizing $R$. We proceed with constructing in \S~\ref{generalregkd}  analogous skein-theoretic generalizations $D[T]$ and $K[Q]$ for the Dubrovnik and the Kauffman polynomials. In \S~\ref{generalregh}  we also provide the ambient isotopy reformulations of all new invariants. 
In \S~\ref{closedforms} we adapt the closed combinatorial formula of Lickorish  to our more general regular isotopy setting for $H[R]$ and we prove analogous Lickorish-type closed formulae for the generalized invariants $D[T]$ and $K[Q]$. In this section we also show, by using the combinatorial formulae, that the 4-variable polynomials $H[R]$, $D[T]$ and $K[Q]$ are in fact topologically equivalent to the 3-variable polynomials $H[H]$, $D[D]$ and $K[K]$ respectively. 
Finally, after summarizing in \S~\ref{conclusions},  in  \S~\ref{directions} we discuss further mathematical directions for the research in this paper. We also discuss possible relationships with reconnection in vortices in fluids, strand switching and replication of DNA, particularly the possible relations with the replication of Kinetoplast DNA, and we discuss the possibility of multiple levels in the quantum Hall effect where one considers the braiding of quasi-particles that are themselves physical 
subsystems composed of multiple electron vortices centered about magnetic field lines.

\section{Previous work} \label{sectheta}

In \cite{jula2} 2-variable framed link invariants $\Gamma_{d,D}$ were constructed for each $d\in \N$ via the Yokonuma--Hecke algebras $\YH(u)$, the Juyumaya trace and  specializations imposed on the framing parameters of the trace, where $D$ is any non-empty subset of $\Z/d\Z$.  When restricted to classical links, seen as links with zero framings on all components, these invariants give rise to ambient isotopy  invariants for classical links $\Delta_{d,D}$.  We note that for $d=1$ the algebra ${\rm Y}_{1,n}$ coincides with the Iwahori--Hecke algebra of type $A$, the trace coincides with the Ocneanu trace and the  invariant $\Delta_{1,\{1\}}$ coincides with the Homflypt polynomial, $P$. The invariants $\Delta_{d,D}$ were studied in \cite{jula3,chla}, especially their relation to  $P$, but topological comparison had not been possible due to algebraic and diagrammatic difficulties. 

 Eventually, in \cite{chmjakala,chjukala} another presentation using a different quadratic relation for the Yokonuma--Hecke algebra was adopted from \cite{chpa1} and the classical link invariants related to the new presentation of the  Yokonuma--Hecke algebras were now denoted $\Theta_{d,D}$. For $d=1$, $\Theta_{1,\{1\}}$ also coincides with $P$ with variables related to the corresponding different presentation of the Iwahori--Hecke algebra. Consequently, in \cite{chjukala} a series of results were proved, which led to the topological identification of the invariants $\Theta_{d,D}$ and to their generalization to a new 3-variable ambient isotopy invariant $\Theta$.  Firstly, it was shown that the invariants $\Theta_{d,D}$ can be enumerated only by $d$ and so they were denoted as $\Theta_d$. It was also shown that on {\it knots} the invariants  $\Theta_d$ are topologically equivalent to the Homflypt polynomial. Namely, if $K$ is a \textit{knot}, then
\begin{center}
$\Theta_d (q,z)(K) = P(q, d z)(K)$.
\end{center}
 The above result was generalized to unions of unlinked knots. Namely, if ${\mathcal K}^r := \sqcup_{i=1}^r K_i$ is a  union of $r$ unlinked knots, we have 
\begin{center}
$\Theta_d (q,z)({\mathcal K}^r) = 1/d^{1-r} P(q,d z)({\mathcal K}^r)$.
\end{center}
It was further shown in \cite{chjukala} that the invariants $\Theta_d$ satisfy on any oriented link diagram $L$ a {\it mixed skein relation} on crossings between different components of  $L$: 
\begin{center}
$
\frac{1}{\sqrt{\lambda_d}} \, \Theta_d(L_+) - \sqrt{\lambda_d} \, \Theta_d(L_-) = (q-q^{-1}) \, \Theta_d(L_0),
$
\end{center}
where $L_+$, $L_-$, $L_0$ is an oriented Conway triple and $\lambda_d :=\frac{d z  - (q-q^{-1})}{d z}$. The above skein relation is identical to the skein relation of the Homflypt polynomial $P$ considered at variables $(q, \lambda_d)$. As a consequence, the invariants $\Theta_d$ can be computed  directly from the diagram $L$ by applying the mixed skein relation between pairs of different components and gradually decomposing $L$ into  unions of unlinked knots that result as mergings of components of $L$ via the smoothings in the mixed skein relation. Then, one has to evaluate the Homflypt polynomials of the  unions of unlinked knots. Namely:
\begin{center}
$
\Theta_d(L) =  \sum_{k=1}^{c} \frac{1}{d^{1-k}} \sum_{\ell \in \mathcal{K}^k} p(\ell) \,P(\ell),
$
\end{center} 
where $c$ is the number of components of the link $L$, $\mathcal{K}^k$ denotes the set of all split links $\ell$ with $k$ split components, obtained from $L$ by applying the mixed skein relation for $k=1,\ldots, c$ and $p(\ell)$ are the coefficients coming from the application of the mixed skein relation. Finally, the above results enabled in \cite{chjukala} the topological distinction of the invariants $\Theta_d$ from the Homflypt polynomial  on Homflypt-equivalent pairs of {\it links}.
 To summarize, the family of  invariants $\{\Theta_d(q,\lambda_d)\}_{d\in\N}$ is a family of new skein invariants for links that includes  the Homflypt polynomial $P$ for $d=1$ and  are distinct from $P$ for each $d > 1$. In \cite{chjukala} it is further demonstrated that the family of invariants $\{\Theta_d(q,\lambda_d)\}_{d\in\N}$ generalizes to {\it a new 3-variable skein link invariant} $\Theta(q,\lambda,E)$, which is defined skein-theoretically on link diagrams by the following inductive rules:
\begin{enumerate}
\item On crossings involving different components the following skein relation holds:
$$
\frac{1}{\sqrt{\lambda}} \Theta( \includegraphics[scale=0.5]{crossing_pos.pdf} ) - \sqrt{\lambda} \Theta(\includegraphics[scale=0.5]{crossing_neg.pdf} ) = (q-q^{-1})\, \Theta(\includegraphics[scale=0.5]{crossing_zero.pdf}),
$$
\item For  $\mathcal{K}^r := \sqcup_{i=1}^r K_i$, a  union of  $r$ unlinked knots, with $r \geq 1$, it holds that:
$$
\Theta(\mathcal{K}^r) =  E^{1-r} \,P(\mathcal{K}^r).
$$
\end{enumerate}
The invariant $\Theta$   specializes to $P$ for $E = 1$ and to $\Theta_d$ for  $E = 1/d$,   and is stronger than $P$. Further, $\Theta$ satisfies the same properties as the invariants $\Theta_d$ and $P$, namely: multiplicative behaviour on connected sums,  inversion of certain variables on mirror images, non-distinction of mutants. For details see \cite{chmjakala}. The well-definedness of $\Theta$ is proved in \cite{chjukala} by  comparing it to an invariant $\overline{\Theta}$ for tied links, constructed from the algebra of braids and ties \cite{AJ1}, but using now the new quadratic relation for it. The invariant $\overline{\Theta}$ is analogous but, as computational evidence indicates, not the same as the invariant $\overline{\Delta}$  for tied links of F.~Aicardi and J.~Juyumaya \cite{AJ2}. In the next section of this paper we will derive an independent purely skein-theoretic proof for the well-definedness of $\Theta$. Moreover, in \cite[Appendix B]{chjukala}  W.B.R. Lickorish proved the following closed combinatorial formula for the invariant $\Theta$ on an oriented link $L$  with $n$ components (proved also  in \cite{pawa} with different methods): 
\begin{equation} \label{lickorish}
\Theta (L) = \sum_{k=1}^n \mu^{k-1}E_k \sum_\pi \lambda^{\nu(\pi)}P(\pi L),
\end{equation}
where the second summation is over all partitions $\pi$ of the components of $L$ into $k$ (unordered) subsets and $P(\pi L)$ denotes the product of the Homflypt polynomials of the $k$ sublinks of $L$ defined by $\pi$. Furthermore, $\nu(\pi)$ is the sum of all linking numbers of pairs of components of $L$ that are in distinct sets of $\pi$, $E_k = (E^{-1} - 1)(E^{-1} - 2) \cdots (E^{-1} - k + 1)$, with $E_1 =1$, and $\mu = \frac{\lambda^{-{1/2}} - \lambda^{{1/ 2}}}{q - q^{-1}}$. We see from this combinatorial formula that the extra strength of $\Theta_d$ and $\Theta$ comes from its ability to detect   linking numbers and  non-triviality  of certain sublinks of the link $L$. In our regular isotopy formulation, even the linking numbers are not needed.

\smallbreak

While the Homflypt polynomial is linked with the knot algebras discussed in this section, it is an open and interesting question to have a similar full connection to  an appropriate knot algebra for the invariants $D[T]$ and $K[Q]$, such as the framization of the BMW algebra proposed in \cite{jula4,jula5}. See also \cite{AJ3}.

\section{Skein theory for the generalized invariants}

In this section we establish the generalized invariants  $H[R]$, $D[T]$ and $K[Q]$ by using purely skein theory, without reference to the closed formulas of \S~\ref{closedforms}. This puts our invariants on the same logical basis as the original skein polynomials that they generalize.

\subsection{Skein theory for the polynomial $H[R]$ - Proof of Theorem~\ref{hofr}} \label{generalregh}

In the Introduction we have given the skein relation for the  regular isotopy invariant for oriented links,  $H[R]$, which generalizes the regular isotopy version of the Homflypt polynomial, $R$, and we have explained how to use the formalism of this skein relation to calculate this invariant. It is the purpose of this section to give a rigorous proof by induction for the well-definedness of  $H[R]$, thereby proving Theorem~\ref{hofr}. The 4-variable invariant $H[R]$ in its ambient isotopy version generalizes the skein-theoretic concept of the 3-variable invariant $\Theta(q,\lambda,E)$ \cite{chjukala}. We then use inductive methods based on the methods of Lickorish-Millett and Kauffman and our own variations to prove that  $H[R]$ is well-defined. Once this theorem is in place and by normalizing $H[R]$ to obtain its ambient isotopy counterpart, $P[R]$, we have a skein-theoretic proof  of the well-definedness of the invariant $\Theta$.

\subsubsection{Computing algorithm for $H[R]$} \label{algorithm}

Assuming Theorem~\ref{hofr}, the invariant $H[R]$ on any oriented link diagram $L$ can be easily computed by applying the following algorithm:

\begin{enumerate}[{\bf Step 1.}]
\item {\it (Diagrammatic level)} Order the components of $L$ and choose a basepoint and a direction on each component (could be the one pointed by the orientation of the component). Start  from the chosen point of the first component and  go along it in the chosen direction. When arriving at a {\it mixed} crossing for the first time along an under-arc we switch it by  the mixed skein relation,  so that  we pass by the mixed crossing along the over-arc. At the same time we smooth the mixed crossing, obtaining a new diagram in which the two components of the crossing merge into one. We repeat for all mixed crossings of the first component. In the end among all resulting diagrams there is only one with the same number of crossings as the initial diagram and in this one this component gets unlinked from  the rest and lies above all of them. The other resulting diagrams have one less crossing and have the first component fused together with some other component.  
We proceed similarly with the second component switching all its mixed crossings except for crossings involving the first component. In the end the second component gets unliked from all the rest and lies below the first one and above all others in the maximal crossing diagram, while we also obtain diagrams containing mergings of the second component with others (except component one). We continue in the same manner with all components in order and we also apply this procedure to all product diagrams coming from smoothings of mixed crossings. In the end  we obtain the unlinked version of $L$ plus a linear sum of  links $\ell$ with unlinked components resulting from the mergings of different components.
 \smallbreak
\item {\it (Computational level)}   On the level of the invariant $H[R]$, rule (1) of Theorem~\ref{hofr} tells us how the switching of mixed crossings is controlled: 
 \smallbreak
\noindent $
 H[R](L_+) - H[R](L_-) = z \, H[R](L_0) \hfill \textit{Rule (1)}
$
 \smallbreak
\noindent where $L_+$, $L_-$, $L_0$ is an oriented Conway triple. After all applications of the mixed skein relation we have a linear sum of  links $\ell$ with unlinked components. The evaluation of the invariant $H[R]$ on each $\ell$  reduces to the evaluation $R(\ell)$ by rule (2) of Theorem~\ref{hofr}:
 \smallbreak
\noindent $
  H[R]({\ell}) =  E^{1-r} \, R(\ell) \hfill \textit{Rule (2)}
$
 \smallbreak
\noindent where  $r$ is the number of knotted components of $\ell$.
 \smallbreak
\noindent In the end we obtain a linear sum of the values of the invariant $H[R]$ on all the resulting  links $\ell$ with unlinked components:
 \smallbreak
\noindent $
  H[R](L) =  \sum_{k=1}^{c} E^{1-k} \sum_{\ell \in \mathcal{K}^k} p(\ell) \,R(\ell), \hfill \textit{Rule (3)}
$
 \smallbreak
\noindent where $p(\ell)$ are the coefficients coming from the applications of the mixed skein relation. Then, on each $R(\ell)$ rule (R5) applies and then rules (R1)--(R5) are employed.
\end{enumerate}

\subsubsection{Our terminology and notations}\label{notations} 

As usual, an {\it oriented link} is a link with an orientation specified for each component. Also, a {\it link diagram} is a projection of a link on the plane with only double points, the crossings, of which there are finitely many, and which are endowed with information `under/over'. We shall be using the same notation for a link and a diagram of it as long as there is no risk of confusion.
 Two oriented link diagrams are considered {\it equivalent} if they differ by oriented regular isotopy moves on the plane, namely, by planar isotopy and by Reidemeister moves~II and~III with all variations of orientations.  For a  union  of $r$ unlinked knots, with $r \geq 1$, we will be using the notation ${\mathcal K}^r := \sqcup_{i=1}^r K_i$.

 An (oriented) link diagram is called {\it generic} if it is {\it ordered}, that is, an order $c_1, \ldots, c_r$ is given to its components, {\it directed}, that is, a direction is specified on each component, and {\it based}, that is, a basepoint is specified on each component, distinct from the double points of the crossings.  In a  generic diagram we label the mixed crossings by distinct natural numbers. The set of all generic link diagrams is denoted  ${\mathcal D}$. 

A generic diagram on $r$ components is said to be a {\it  descending stack} if, when walking along the components of ${\mathcal K}^r$ in their given order following the orientations and starting from their basepoints, every mixed crossing is first traversed along its over-arc.  Clearly, the structure of a descending stack no longer depends on the choice of basepoints; it is entirely determined by the order of its components. Note also that  a descending stack  is equivalent to the corresponding split link  ${\mathcal K}^r$  comprising the $r$ knotted components of the initial diagram. The  descending stack of knots associated to a given generic link diagram $L$ by applying the computing  algorithm on $L$ is denoted $d L$.  Furthermore, the {\it distance} of  $L$ from  $d L$ is the number of mixed crossing switches needed  to arrive at  $d L$ by applying the computing algorithm. Clearly, the distance of  a generic diagram is well-defined. 

We let, now, $\mathcal{Z} := \Z[z, w, a^{\pm 1}, E^{\pm 1}]$ denote the ring of finite Laurent polynomials in four variables $z, w, a, E$. In all proofs, the evaluation $H[R](L) \in \mathcal{Z}$ on a generic link diagram $L$ will be shortened to $(L)$. Moreover, let  $\varepsilon$ denote the sign of a mixed crossing in $L$. Then rule~(1) of Theorem~\ref{hofr} can be re-written as: 
\begin{equation}\label{mixedskein}
(L_{\varepsilon}) = (L_{-\varepsilon})  + \varepsilon z \, (L_0).
\end{equation}
Note that, if the generic diagram $L_{\varepsilon}$ has distance $n$ from $d L_{\varepsilon}$ and (\ref{mixedskein}) is applied on $L_{\varepsilon}$ within the computing algorithm, then the  diagrams $L_{-\varepsilon}$ and $L_0$ have distances less than $n$ from their associated descending stacks, if we assume that they are generic and their generic characteristics are inherited from $L_{\varepsilon}$. Note finally that the distance of a descending stack is zero.

\subsubsection{Proof of Theorem~\ref{hofr}} \label{proofregh}

For proving Theorem~\ref{hofr} one has to show that the computation of $H[R] (L)$, via the algorithmic steps described above, does not depend on the sequence of mixed crossing switches, the ordering of components, the choice of basepoints, and the performance of Reidemeister moves II and III. We use strong induction on the distance in the set  $\mathcal{D}$ of generic diagrams, as supposed to the total number of crossings.  On knots we assume the well-definedness of $R$. 
The set of all generic diagrams with distance at most $n$ is denoted $\mathcal{D}_n$. 
Clearly, the basis of the induction is the set of all descending stacks of knots, and rule~(2) of Theorem~\ref{hofr} applies.  Namely, for  a  descending stack of knots, $ \mathcal{K}^r$, define $H[R](\mathcal{K}^r) = E^{1-r} \, R(\mathcal{K}^r)$.   We then assume that the statement is valid for all generic link diagrams in $\mathcal{D}_{n-1}$, independently of choices made during the evaluation process and of Reidemeister III moves and Reidemeister II moves that do not increase the distance above $n-1$. Our aim is to prove that the statement is valid for all diagrams in the set $\mathcal{D}_n$, independently of choices and Reidemeister II and III moves not increasing the distance above $n$.

 Our proof follows the logic of \cite{LM} but adapted to our setting. The fact that the process treats self-crossings and mixed crossings differently is the main difference from \cite{LM} and it causes the need of  special  arguments in proving invariance of the resulting evaluation under the sequence of mixed crossing switches and the order of components (Propositions~\ref{orderxings} and~\ref{ordercpts}). For example, when we have to switch two mixed crossings which belong to the same pair of components, the  switching of the second crossing cannot apply also to the smoothing of the first one, since that one caused the merging of the two components. So in this diagram the second crossing is a self-crossing, and by the algorithm we cannot operate on it. So, when we change the order of the mixed crossing switches, the diagrams involved are not comparable in an obvious way as is the case in \cite{LM}. This problem is taken care of in the proof of Proposition~\ref{orderxings}, by reaching down to the corresponding descending stacks.

\vspace{.2cm}

\noindent {\it The inductive hypothesis} $(n-1)$: Assume that for any link diagram $N \in \mathcal{D}_{n-1}$ a unique polynomial $H[R](N) \in \mathcal{Z}$ is associated, which is independent of the choices made during its evaluation, is invariant under  Reidemeister III moves and  Reidemeister II moves, not increasing the distance  beyond $n-1$, and which satisfies formula~(1) of Theorem~\ref{hofr}. As a consequence, note that if $L_{\varepsilon}$ has distance  $n$ from $d L_{\varepsilon}$ then the right-hand side of (\ref{mixedskein}) is well-defined by the inductive hypothesis $(n-1)$. 

\vspace{.2cm}

\noindent {\it The recursive definition} $(n)$: Let $L\in \mathcal{D}_n$ a generic link diagram on $r$ components. Apply the steps of the computing algorithm employing formulae (1) and (2) of Theorem~\ref{hofr}. The inductive hypothesis $(n-1)$ applies on all generic diagrams in the skein tree, right after the first crossing switch, so the  resulting polynomial  $(L)$ in $\mathcal{Z}$ is well-defined.  However,  $(L)$ may depend on the sequence of mixed crossing switches, the order of the components  on the choice of basepoints, on Reidemeister III moves or non-increasing distance Reidemeister II moves applied on $L$.  So, we will prove a series of propositions to ensure that these possibilities will not occur, hence we will have proved the inductive hypothesis $(n)$. Let  $L^\prime$ denote throughout  the generic diagram resulting after the change each time, for which we assume the same generic characteristics as $L$.

\begin{prop} \label{orderxings}
 If the mixed crossings of $L$ that differ from those of $d L$ are switched in any sequence to achieve $d L$, then the corresponding polynomial $H[R](L)$ does not change. 
\end{prop}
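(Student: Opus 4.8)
The plan is to follow the Lickorish--Millett strategy, reducing the independence of the whole switching sequence to the commutation of two consecutive switches. Any two sequences of mixed-crossing switches carrying $L$ to $d L$ flip exactly the same set of mixed crossings, namely those at which $L$ and $d L$ disagree, each once; hence the two sequences differ by a succession of transpositions of adjacent switches, and it suffices to show that interchanging two consecutive switches, say at mixed crossings $p$ and $q$, leaves the computed polynomial unchanged. Writing $(D)$ for $H[R](D)$, and denoting by $D_{\hat p}$, $D_{0_p}$ the diagrams obtained from $D$ by switching, respectively oriented-smoothing, the crossing at $p$ (and composing such operations in the evident way), two applications of the mixed skein relation \eqref{mixedskein} give
\[
(L) = (L_{\hat p\hat q}) + \varepsilon_q z\,(L_{\hat p\,0_q}) + \varepsilon_p z\,(L_{0_p})
\]
when $p$ is switched before $q$, and the symmetric expression when $q$ is switched first. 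Since switching at two distinct crossings commutes as an operation on diagrams, $L_{\hat p\hat q} = L_{\hat q\hat p}$, and these top terms are carried on identically by the remaining switches; thus the whole claim reduces to the single identity
\[
\varepsilon_p\big[(L_{0_p}) - (L_{\hat q\,0_p})\big] = \varepsilon_q\big[(L_{0_q}) - (L_{\hat p\,0_q})\big],
\]
all four diagrams here lying in $\mathcal{L}_{n-1}$, so that the inductive hypothesis $(n-1)$ applies to each.

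I would then split into cases according to how the two crossings meet the components. The oriented smoothing at a mixed crossing merges its two components into one. If $p$ and $q$ lie between \emph{different} unordered pairs of components, then after smoothing $p$ the crossing $q$ is still mixed, and conversely. In this easy case I would apply the mixed skein relation \eqref{mixedskein} at $q$ inside $L_{0_p}$ and at $p$ inside $L_{0_q}$; this is legitimate by the inductive hypothesis, since these are $(n-1)$-crossing diagrams and the relevant crossings remain mixed. Because smoothing does not alter the sign of the other crossing, this yields $(L_{0_p}) - (L_{\hat q\,0_p}) = \varepsilon_q z\,(L_{0_p0_q})$ and $(L_{0_q}) - (L_{\hat p\,0_q}) = \varepsilon_p z\,(L_{0_q0_p})$; as $L_{0_p0_q} = L_{0_q0_p}$, both sides of the required identity equal $\varepsilon_p\varepsilon_q z\,(L_{0_p0_q})$ and we are done.

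The main obstacle is the hard case, in which $p$ and $q$ are both crossings between one and the \emph{same} pair of components $A,B$. Here smoothing $p$ fuses $A$ and $B$, so the crossing $q$ becomes a \emph{self-crossing} of the merged component, and self-crossings are never touched by the algorithm; consequently the mixed skein relation is no longer available to relate $L_{0_p}$ to $L_{\hat q\,0_p}$. My plan is to establish the identity geometrically, by exhibiting regular isotopies between the branch diagrams that depend only on the signs $\varepsilon_p,\varepsilon_q$: when $\varepsilon_p=\varepsilon_q$ I expect $L_{0_p}\cong L_{0_q}$ and $L_{\hat q\,0_p}\cong L_{\hat p\,0_q}$, whereas when $\varepsilon_p=-\varepsilon_q$ I expect $L_{0_p}\cong L_{\hat p\,0_q}$ and $L_{\hat q\,0_p}\cong L_{0_q}$. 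These identifications should come from local Reidemeister~II moves on the two-strand tangle formed by $A$ and $B$ near $p$ and $q$, with any spectator strands threading the region slid off by Reidemeister~III moves, which are permitted and which the inductive hypothesis already respects at level $n-1$. Granting them, the identity follows from a one-line sign check: in the first subcase both sides equal $\varepsilon_p\big[(L_{0_p})-(L_{\hat q\,0_p})\big]$, and in the second the relation $\varepsilon_q=-\varepsilon_p$ together with the swapped identifications turns the right-hand side into the left.

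I expect the genuine difficulty to be concentrated entirely in this hard case: one must verify that the claimed Reidemeister~II identifications really do hold for every orientation and over/under configuration of $p$ and $q$ (the Hopf-type local models, where the two branches evaluate to $a$ and $a^{-1}$, are the guiding examples), and that the presence of other components passing through the bigon between $p$ and $q$ does not obstruct the local move. The regular-isotopy invariance and well-definedness of $H[R]$ on $\mathcal{L}_{n-1}$ furnished by the inductive hypothesis, together with the fact that the underlying invariant $R$ satisfies its skein relation on \emph{all} crossings, so that self-crossing changes are controlled at the bottom level of the skein tree, are precisely what make these geometric identifications usable. Assembling them with care, including the orientation bookkeeping, is the elaborate part of the argument, and is exactly what the corresponding step of \cite{LM} handles in the setting where mixed and self-crossings are not distinguished.
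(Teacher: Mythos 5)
Your reduction to an adjacent transposition of two switches, the derivation of the key identity
\[
\varepsilon_p\bigl[(L_{0_p}) - (L_{\hat q\,0_p})\bigr] \;=\; \varepsilon_q\bigl[(L_{0_q}) - (L_{\hat p\,0_q})\bigr],
\]
and your treatment of the case where $p$ and $q$ involve different pairs of components all agree with the paper's argument. The gap is in the hard case, where $p$ and $q$ are mixed crossings between the \emph{same} pair of components $A,B$. There you propose to verify the identity termwise, by exhibiting regular isotopies $L_{0_p}\cong L_{0_q}$ and $L_{\hat q\,0_p}\cong L_{\hat p\,0_q}$ (or the swapped pairing when the signs differ) coming from a local Reidemeister~II move on a bigon between $p$ and $q$. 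This claim is false in general: $p$ and $q$ are arbitrary mixed crossings of the pair $A,B$, not adjacent crossings cobounding a bigon, and the oriented smoothing at $p$ fuses $A$ and $B$ into a single knot whose type genuinely depends on the fusion site. Smoothing different mixed crossings of the same two-component link can produce knots of different types (the paper's own Whitehead-link computation produces a trefoil from one smoothing and an unknot from another), so no regular isotopy between $L_{0_p}$ and $L_{0_q}$ exists in general, and no sign bookkeeping can repair a termwise identification that fails already at the level of knot types.

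The cancellation in this case is algebraic rather than geometric, and it is exactly here that the structure of $H[R]$ as a two-level invariant must be used. After smoothing $p$ (resp.\ $q$), the surviving crossing $q$ (resp.\ $p$) has become a \emph{self}-crossing, so the mixed skein relation is unavailable; instead one must first carry each of the four branch diagrams the rest of the way down to a descending stack by switching whatever mixed crossings remain (these remaining switches match up in pairs between the two skein trees, which requires a secondary induction on the number of switches still needed -- the paper does the cases of zero and one extra switch explicitly), then apply rule~(2) of Theorem~\ref{hofr} to replace each $(\,\cdot\,)$ by $E^{2-c}R(\,\cdot\,)$, and finally apply the skein relation of $R$ itself at the self-crossings $i$ and $j$. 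Since $R$ is already known to be well defined and its skein relation holds at \emph{all} crossings, the resulting expression telescopes to zero: one gets identities such as $R(s_jL')=R(\sigma_i s_jL')+\varepsilon_i w\,R(s_is_jL')$, and the terms pair off because $s_j\sigma_iL$ and $\sigma_is_jL'$ (etc.) are the same diagrams. You gesture at this mechanism in your final paragraph ("$R$ satisfies its skein relation on all crossings"), but you invoke it only to support the geometric identifications rather than as the engine of the cancellation; as written, the hard case of your argument does not go through.
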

\begin{proof}
 For $n=1$ there is nothing to show. Suppose now that $n\geq 2$ and that $L^\prime$ is the same diagram as $L$ with only difference the order of the mixed crossing switches. By the fact that any permutation is a product of adjacent elementary transpositions, and by induction on the length of the permutation induced by the two different orderings, it suffices to show invariance of $(L)$ under the exchange of two adjacent mixed crossing switches, say of the crossings  $i$ and $j$. We shall denote by $L^\prime$ the same generic diagram as $L$ but with the reverse order in switching $i$ and $j$. 
 Denote also $c_i$, $c_i^\prime$ the components of $L$ forming the $i$th crossing and $c_j$, $c_j^\prime$ the components forming the $j$th crossing. 
Let further $\sigma_i L$ and $s_i L$ denote the same diagrams as $L$, except that in  $\sigma_i L$ the $i$th crossing is switched and in $s_i L$  the $i$th crossing is smoothed. Same for the crossing $j$. 
Let now $\varepsilon_i , \varepsilon_j$ be the signs of the $i$th and $j$th crossings in $L$ respectively. We take first the order $i$ before $j$ and we compute using~(\ref{mixedskein}):
\begin{equation}\label{ij}
(L)  =  (\sigma_i L) + \varepsilon_i z (s_i L) 
  =   (\sigma_j \sigma_i L) + \varepsilon_j z (s_j \sigma_i L) + \varepsilon_i z (s_i L).
\end{equation}
Computing with the reverse order we obtain the expression:
\begin{equation}\label{ji}
(L^\prime)  =  (\sigma_j L^\prime) + \varepsilon_j z (s_j L^\prime) 
 =  (\sigma_i \sigma_j L^\prime) + \varepsilon_i z (s_i \sigma_j L^\prime) + \varepsilon_j z (s_j L^\prime). 
\end{equation}
All generic diagrams in the right-hand sides of (\ref{ij}) and (\ref{ji}) have distance at most $n-1$, so their polynomials are well-defined by the inductive hypothesis. Moreover, the generic diagrams $\sigma_j \sigma_i L$ and $\sigma_i \sigma_j L^\prime$ coincide, so $(\sigma_j \sigma_i L) = (\sigma_i \sigma_j L^\prime)$. {\it This is the first comparison level.} Hence, subtracting (\ref{ji}) from (\ref{ij}) we obtain: 
\begin{equation}\label{ijminusji}
(L) - (L^\prime) =  \varepsilon_j z \big[ (s_j \sigma_i L) - (s_j L^\prime) \big] + \varepsilon_i z \big[ (s_i L) - (s_i \sigma_j L^\prime)  \big].
\end{equation}
Further, the diagrams $s_j \sigma_i L$ and $s_j L^\prime$ differ only by the switching of the crossing $i$, which is either a mixed crossing in both diagrams 
 or a self-crossing in both diagrams (due to the smoothing of the crossing $j$). 
 Analogous considerations hold for the diagrams $s_i L$ and $s_i \sigma_j L^\prime$ which differ  by the switching of the crossing $j$. 

\bigbreak
Suppose first that the mixed crossings $i$ and $j$ belong originally to {\it   different pairs of components}, so  $c_i \neq c_j, c_j^\prime$. This case goes in analogy with the classical situation in \cite{LM}. In this case the crossing $i$ is a mixed crossing in the diagram $s_j \sigma_i L$ and the crossing $j$ is a mixed crossing in the diagram $s_i \sigma_j L^\prime$. So, we may use  formula (\ref{mixedskein}) {\it for switching them back}:
$$
(s_j \sigma_i L) = (s_j L) - \varepsilon_i z (s_j s_i L)  
\quad \text{and} \quad 
(s_i \sigma_j L^\prime)  =  (s_i L^\prime) - \varepsilon_j z (s_i s_j L^\prime).
$$
Substituting the above expressions in (\ref{ijminusji})  we obtain:
\begin{equation}\label{preijji}
(L) - (L^\prime) =  \varepsilon_j z \big[  (s_j L) - \varepsilon_i z (s_j s_i L)  - (s_j L^\prime) \big] + \varepsilon_i z \big[ (s_i L) - (s_i L^\prime) + \varepsilon_j z (s_i s_j L^\prime) \big].
\end{equation}
We now apply the {\it second round of cancellations } within the grouped terms, using the inductive hypothesis $(n-1)$, to finally obtain the comparison equality:
$$
(L) - (L^\prime) =  - \varepsilon_j z \varepsilon_i z (s_j s_i L) + \varepsilon_i z  \varepsilon_j z (s_i s_j L^\prime).
$$
 Since the coefficients match up we obtain equivalently:
\begin{equation}\label{ijji}
(L) - (L^\prime)  = \varepsilon_i  \varepsilon_j z^2 \big[ (s_i s_j L^\prime) -  (s_j s_i L)  \big]
\end{equation}
 and by the inductive hypothesis $(n-1)$, the right-hand side of the equation is zero and the two expressions $(L)$ and $(L^\prime)$ are equal. This is the {\it third comparison level}.

\bigbreak
 Suppose now that the mixed crossings $i$ and $j$ belong to {\it  the same pair of components} $c_i$ and $c_j$ of $L$ (recall Figure~\ref{ijjifig}). Then the crossing $i$ is a {\it self-crossing}  in the diagram $s_j \sigma_i L$ and the crossing $j$ is a {\it self-crossing} in the diagram $s_i \sigma_j L^\prime$.  So we are not allowed to apply (\ref{mixedskein}). 
 In this case we have to reach down to the descending stacks in order to compare the polynomials $(L)$ and $(L^\prime)$, since then the skein relation of $R$ can be applied for switching back the crossings  $i$ and $j$ in the diagrams $s_j \sigma_i L$ and $s_i \sigma_j L^\prime$. This process depends on the number of mixed crossings to be switched that are shared by the components $c_i$ and $c_j$. So, we proceed as follows.  

\smallbreak
We first prove the proposition for the case where in the initial generic diagrams $L$ and $L^\prime$  the switching of only the two mixed crossings $i$ and $j$ is needed. This means that $L$ and $L^\prime$ have distance 2. In this case the diagrams $\sigma_j \sigma_i L$ and $\sigma_i \sigma_j L^\prime$ both coincide with $d L$ and all diagrams in (\ref{ijminusji}) are also descending stacks of one component less than $L$, namely $r-1$, since $c_i$ and $c_j$ are merged together. So, applying on all of them the inductive hypothesis and rule~(2) of Theorem~\ref{hofr}, (\ref{ijminusji}) becomes the formula below by substituting the stack evaluations. 
\begin{equation}\label{ijminusji2}
(L) - (L^\prime) =  \varepsilon_j z E^{2-r} \big[ R(s_j \sigma_i L) - R(s_j L^\prime) \big] + \varepsilon_i z  E^{2-r}  \big[ R(s_i L) - R(s_i \sigma_j L^\prime)  \big].
\end{equation}
We are now in the position to {\it switch back} the self-crossing $i$ of the diagram $s_j \sigma_i L$ and the self-crossing $j$ of the diagram $s_i \sigma_j L^\prime$, by  applying now the skein relation on the level of  $R$: 
\begin{equation}\label{rsjLprime}
 R(s_j \sigma_i L) = R(s_j L) - \varepsilon_i w R(s_j s_i L)   \quad \mbox{and} \quad R(s_i \sigma_j L^\prime) = R(s_i L^\prime) - \varepsilon_j w R(s_i s_j L^\prime).
\end{equation}
Substituting in (\ref{ijminusji2}) we obtain:
\begin{equation}\label{preijminusji2r}
(L) - (L^\prime) =  \varepsilon_j z E^{2-r} \big[ R(s_j L) - \varepsilon_i w R(s_j s_i L) - R(s_j L^\prime) \big] + \varepsilon_i z  E^{2-r}  \big[ R(s_i L) - R(s_i L^\prime) + \varepsilon_j w R(s_i s_j L^\prime)  \big].
\end{equation}
Doing now the {\it second round of cancellations} among the grouped terms, since the diagrams $s_j L$ and $s_j L^\prime$ and also the diagrams $(s_i L$ and $s_i L^\prime$ are identical,  we obtain the equation:
$$
(L) - (L^\prime) =  \varepsilon_j z E^{2-r} \big[ - \varepsilon_i w R(s_j s_i L) \big] + \varepsilon_i z  E^{2-r}  \big[ \varepsilon_j w R(s_i s_j L^\prime)  \big].
$$
It should be now observed that in the above equalily the coefficients match even though $w$ has now also entered the picture. So we arrive at the equation:
\begin{equation}\label{ijminusji2r}
(L) - (L^\prime) =  \varepsilon_i  \varepsilon_j z w E^{2-r} \big[ R(s_i s_j L^\prime)  - R(s_j s_i L) \big],
\end{equation}
 which is of the same type as (\ref{ijji}). We finally proceed with the {\it third comparison level} by applying $R$ on the identical diagrams $s_i s_j L^\prime$ and $s_j s_i L$, to obtain  $(L) = (L^\prime)$.

\smallbreak
 Suppose next that in the diagrams $L$ and $L^\prime$  the switching of three mixed crossings $i$, $j$ and $k$ is needed and that the order of $i$ and $j$ is the only difference between  $L$ and $L^\prime$. The switching of $k$ applies initially on the diagrams $\sigma_j \sigma_i L$ and $\sigma_i \sigma_j L^\prime$ in  (\ref{ij}) and (\ref{ji}), independently of which pair of components it belongs to. So, we obtain: 
 \begin{equation}\label{ijk}
 (L) = (\sigma_k \sigma_j \sigma_i L) + \varepsilon_k z (s_k \sigma_j \sigma_i L) + \varepsilon_j z (s_j \sigma_i L) + \varepsilon_i z (s_i L),
 \end{equation}
 \begin{equation}\label{jik}
 (L^\prime) = (\sigma_k \sigma_i \sigma_j L^\prime) + \varepsilon_k z  (s_k \sigma_i \sigma_j L^\prime)  + \varepsilon_i z (s_i \sigma_j L^\prime) + \varepsilon_j z (s_j L^\prime). 
 \end{equation}
 The diagrams involved in (\ref{ijk}) and (\ref{jik}) are all descending stacks. Further, by the inductive hypothesis $(n-1)$, $(\sigma_k \sigma_j \sigma_i L) = (\sigma_k \sigma_i \sigma_j L^\prime)$ and  $(s_k \sigma_j \sigma_i L) = (s_k \sigma_i \sigma_j L^\prime)$,  and they have in (\ref{ijk}) and (\ref{jik}) the same coefficients. Subtracting and applying the {\it first cancellation round} we obtain the following equation, in which   terms with the same coefficients are grouped together: 
 \begin{equation}\label{ijkminusjik}
 (L) - (L^\prime) =   \varepsilon_j z \big[ (s_j \sigma_i L) - (s_j L^\prime) \big]
  + \varepsilon_i z \big[ (s_i L) - (s_i \sigma_j L^\prime) \big], 
 \end{equation}
 which is of the same type as (\ref{ijminusji}). The grouped pairs differ by one crossing switch and in all four diagrams involved in (\ref{ijkminusjik}) the components $c_i$ and $c_j$ are merged together. So, the crossing $k$ is either a self-crossing in all of them, which means that it is a crossing between the same pair of components as $i$ and $j$,  or a mixed crossing in all of them, which means that at least one of the components forming $k$ is different from $c_i$ and $c_j$. 

 {\it If $k$ is a self-crossing} in the four diagrams in (\ref{ijkminusjik}), then these diagrams are already descending stacks of $r-1$ components.  We then apply rule~(2) of Theorem~\ref{hofr} and the skein relation of $R$ for the $i$ and $j$ crossing respectively and we proceed as with (\ref{ijminusji2})--(\ref{ijminusji2r}) above, to obtain the equation:
\begin{equation}\label{ijminusjiselfk}
(L) - (L^\prime) =  \varepsilon_i  \varepsilon_j z w E^{2-r} \big[ R(s_i s_j L^\prime)  - R(s_j s_i L) \big],
\end{equation}
 which is analogous to (\ref{ijminusji2r}) and (\ref{ijji}). We finally proceed with the {\it third round of cancellation} by applying $R$ on the identical diagrams $s_i s_j L^\prime$ and $s_j s_i L$, to obtain  $(L) = (L^\prime)$.

  {\it If $k$ is a mixed crossing } in the four diagrams in  (\ref{ijkminusjik})  we proceed with switching it in all four diagrams so as to obtain descending stacks. Namely: 
$$
	 \begin{array}{lcl}
 (L) - (L^\prime) & = &  
   \varepsilon_j z \big[ (\sigma_k s_j \sigma_i L) + \varepsilon_k z (s_k s_j \sigma_i L) - (\sigma_k s_j L^\prime) - \varepsilon_k z (s_k s_j L^\prime) \big] \\ 
 & + & \varepsilon_i z \big[ (\sigma_k s_i L) + \varepsilon_k z (s_k s_i L) 
 - (\sigma_k s_i \sigma_j L^\prime)  - \varepsilon_k z (s_k s_i \sigma_j L^\prime) \big]. 
 	 \end{array}
$$
Grouping pairs of polynomials with the same coefficients we obtain equivalently: 
 \begin{equation}\label{ijkjikk}
 \begin{array}{lclcl}
 (L) - (L^\prime) & = &  
 \varepsilon_j z \big[ (\sigma_k s_j \sigma_i L) - (\sigma_k s_j L^\prime) \big]
 & + & 
 \varepsilon_j \varepsilon_k z^2 \big[ (s_k s_j \sigma_i L) - (s_k s_j L^\prime) \big] \\ 
  & + & 
 \varepsilon_i z 
  \big[ (\sigma_k  s_i L) - (\sigma_k  s_i \sigma_j L^\prime) \big] 
 & + &
  \varepsilon_i \varepsilon_k z^2 \big[ (s_k  s_i L) - (s_k  s_i \sigma_j L^\prime) \big].
 \end{array}
 \end{equation}
 Now, all diagrams in the above expression are descending stacks either of $r-1$ components, if one smoothing is involved, or of $r-2$ components, if two smoothings are involved. So, applying rule~(2) of Theorem~\ref{hofr} we obtain:
 \begin{equation}\label{ijkjikkstacks}
 \begin{array}{lclcl}
 (L) - (L^\prime) & = &  
 \varepsilon_j z E^{2-r} \big[ R (\sigma_k s_j \sigma_i L) -  R (\sigma_k s_j L^\prime) \big]
 & + & 
 \varepsilon_j \varepsilon_k z^2  E^{3-r} \big[ R  (s_k s_j \sigma_i L) -  R (s_k s_j L^\prime) \big] \\ 
  & + & 
 \varepsilon_i z E^{2-r}  
  \big[  R (\sigma_k  s_i L) -  R (\sigma_k  s_i \sigma_j L^\prime) \big] 
 & + &
  \varepsilon_i \varepsilon_k z^2  E^{3-r} \big[  R (s_k  s_i L) -  R (s_k  s_i \sigma_j L^\prime) \big].
 \end{array}
 \end{equation}
In (\ref{ijkjikkstacks}) each grouped pair differs by one self-crossing switch. We are now in the position to {\it switch back} the self-crossings $i$ and $j$ in the terms of the paired diagrams that contain $\sigma_i$ or $\sigma_j$, but using the skein relation on the level of the invariant  $R$. This is  {\it the second comparison level}.  Namely, (\ref{ijkjikkstacks}) yields equivalently:
 \begin{equation}\label{ijkjikkr}
 \begin{array}{lclcl}
 (L) - (L^\prime) & = &  
 \varepsilon_j z \, E^{2-r} \big[ R(\sigma_k s_j L) - \varepsilon_i w R(\sigma_k s_j s_i L) - R(\sigma_k s_j L^\prime) \big] \\
 & + & 
 \varepsilon_j \varepsilon_k z^2 \, E^{3-r} \big[ R(s_k s_j L) - \varepsilon_i w R(s_k s_j s_i L) - R(s_k s_j L^\prime) \big] \\ 
  & + & 
 \varepsilon_i z \, E^{2-r} 
  \big[ R(\sigma_k  s_i L) - R(\sigma_k  s_i L^\prime) + \varepsilon_j w R(\sigma_k  s_i s_j L^\prime)  \big] \\ 
 & + &
  \varepsilon_i \varepsilon_k z^2 \, E^{3-r} \big[ R(s_k  s_i L) - R(s_k  s_i L^\prime) + \varepsilon_j w R(s_k  s_i s_j L^\prime) \big].
 \end{array}
 \end{equation}
After the obvious cancellations of terms  (\ref{ijkjikkr}) becomes equivalently:
$$
 \begin{array}{lclcl}
 (L) - (L^\prime) & = &  
 \varepsilon_j z \, E^{2-r} \big[ - \varepsilon_i w R(\sigma_k s_j s_i L) \big] 
  +  
 \varepsilon_j \varepsilon_k z^2 \, E^{3-r} \big[ - \varepsilon_i w R(s_k s_j s_i L) \big] \\ 
  & + & 
 \varepsilon_i z \, E^{2-r} 
  \big[  \varepsilon_j w R(\sigma_k  s_i s_j L^\prime)  \big] 
 + 
  \varepsilon_i \varepsilon_k z^2 \, E^{3-r} \big[ \varepsilon_j w R(s_k  s_i s_j L^\prime) \big],
 \end{array}
$$
which by grouping further terms with matching coefficients becomes:
$$
 \begin{array}{lclcl}
 (L) - (L^\prime) & = &  
 \varepsilon_i \varepsilon_j z  w \, E^{2-r} \big[ R(\sigma_k  s_i s_j L^\prime) -  R(\sigma_k s_j s_i L) \big] 
	\\ 
  & + & 
   \varepsilon_i \varepsilon_j \varepsilon_k z^2 w \, E^{3-r} \big[  R(s_k  s_i s_j L^\prime) -  R(s_k s_j s_i L) \big].
 \end{array}
$$
We finally {\it switch back} crossing $k$ in the diagrams $\sigma_k  s_i s_j L^\prime$ and $\sigma_k s_j s_i L$ using the skein relation of $R$ and we obtain the comparison equation:
 \begin{equation}\label{thirdkmixed}
 (L) - (L^\prime)  =   
 \varepsilon_i \varepsilon_j z  w \, E^{2-r} \big[ R( s_i s_j L^\prime) -  R( s_j s_i L) \big] 
	 + 
   \varepsilon_i \varepsilon_j \varepsilon_k z^2 w \, E^{3-r} \big[  R(s_k  s_i s_j L^\prime) -  R(s_k s_j s_i L) \big],
 \end{equation}
which is analogous to (\ref{ijminusjiselfk}), (\ref{ijminusji2r}) and (\ref{ijji}).
 We have now reached {\it the third comparison level} and we finally obtain $(L) = (L^\prime)$. 

\smallbreak
  The proof  for any number of mixed crossing switches proceeds in the same manner as for three. Namely, if we have more than three mixed crossing switches, we would start from  Equations~\ref{ijk} and~\ref{jik} and we would apply a switch at the diagrams with highest number of crossings. The point is that we will always have {\it first level cancellations} by the inductive hypothesis $(n-1)$ and the remaining comparison equation will always be of the same type as (\ref{ijminusji}) and (\ref{ijkminusjik}). Starting from there, we have to distinguish cases for every new mixed crossing in the remaining diagrams in the difference equation, as we did for $k$. The principle is that we have to calculate far enough so as to arrive at descending stacks where we apply rule 2. Recall (\ref{ijminusji2}) and (\ref{ijkjikkstacks}). 	Now all expressions become $R$-evaluations and then the difference equations become $R$-identities. More precisely, we apply the skein rule of the $R$-polynomial  in order to {\it switch back} the self-crossings $i$ and $j$ in all terms appearing in the difference equation. This now leads to the {\it second comparison  level}, after which  we switch back, if needed, the crossings subsequent  to $i$ and $j$, using the skein rule of $R$. We are then at the {\it third comparison  level}, with equations of type (\ref{ijminusjiselfk}) and (\ref{thirdkmixed}) accordingly,	which yield  $(L) = (L^\prime)$. It is impressive that the coefficients in the pairings match, and this is ensured by the process. The procedure is abstracted in the diagram below: 
	
$$
\begin{diagram} 
\node{}\node{} \node{} \node{}  \node{(i,j) \ vs \ (j,i)} \arrow{sw,t}{same \ pair \ cpts} \arrow{se,t}{different \  pairs \  cpts} \node{}  
\\ 
\node{} \node{} \node{} \node{}  \arrow{sw,t}{distance > 2} \arrow{se,t}{distance \ 2} \node{}  \node{comparison}  
\\ 
\node{} \node{}  \node{}  \arrow{sw,t}{distance > 3} \arrow{se,t}{distance \ 3}  \node{}  \node{desc. \ stacks}   \node{}   
\\ 
\node{} \node{}  \arrow{sw,..}\arrow{se,..}  \node{}  \node{desc. \ stacks}   \node{} \node{}  
\end{diagram}
$$

\vspace{1.5cm}

\noindent Thus, the proof of the proposition is concluded. 
\end{proof}

Independence of the evaluation of the polynomial $(L)$ under changes of directions (recall \S~\ref{notations}) and choices of basepoints is now an immediate consequence of Proposition~\ref{orderxings}. 

\begin{cor} \label{direction} 
The change of direction  on any component is irrelevant in the computation of the polynomial $H[R](L)$.
\end{cor}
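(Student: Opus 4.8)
The plan is to reduce the statement directly to Proposition~\ref{orderxings} by verifying that reversing the direction on a component alters only the \emph{sequence} in which the algorithm performs mixed crossing switches, and nothing else that enters the evaluation of $H[R](L)$.

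First I would isolate the two quantities that must be shown to be insensitive to the choice of direction: the terminal descending stack $d L$, and the set of mixed crossings of $L$ that differ from those of $d L$. For the first, recall from Section~\ref{notations} that the structure of a descending stack is entirely determined by the order of the components — for each pair $c_i, c_j$ with $i<j$ the earlier component passes over the later one at every mixed crossing between them — so $d L$ does not reference the chosen directions at all. For the second, the over/under data of $L$ and of $d L$ are both fixed once the order is fixed, so the set of mixed crossings where they disagree is an intrinsic feature of the pair $(L, d L)$, again independent of directions. I would also note that the signs $\varepsilon$ appearing in the mixed skein relation~(\ref{mixedskein}) are computed from the orientation of $L$, which is a separate datum from the traversal direction, and hence are unaffected by a direction change.

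Next I would argue that the only effect of reversing the direction on a component $c_k$ is to reverse the order in which the algorithm meets the mixed crossings lying on $c_k$ as it walks along that component; consequently the overall sequence of switches prescribed by Step~1 is merely permuted, while the underlying set of switched crossings (established above) is unchanged. At this point the conclusion is immediate: Proposition~\ref{orderxings} asserts precisely that switching the mixed crossings of $L$ that differ from $d L$ in any sequence yields the same polynomial, so the two evaluations arising from the two directions must coincide.

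The step requiring the most care — and the one I expect to be the main obstacle — is the verification that the set of switched crossings really is unchanged, that is, that the algorithmic criterion \emph{``first traversed along an under-arc''} selects the same crossings under either direction. Here one must use that a crossing between $c_i$ and $c_j$ with $i<j$ is handled while walking along the earlier component $c_i$, and that whether $c_i$ runs under $c_j$ there is independent of which way one walks along $c_i$; reversing the direction changes only which of $c_i$'s crossings is encountered first, not whether it lies on an under-arc. Once this bookkeeping is pinned down, the remainder is a one-line appeal to Proposition~\ref{orderxings}.
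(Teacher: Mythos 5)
Your argument is exactly the paper's: the paper states this corollary as an immediate consequence of Proposition~\ref{orderxings}, and your proposal simply spells out why — the terminal stack $d L$, the set of crossings to be switched, and the signs in the skein relation all depend only on the component order and the orientation, so reversing a traversal direction merely permutes the switching sequence, which Proposition~\ref{orderxings} shows is irrelevant. The bookkeeping you supply is correct and fills in the details the paper leaves implicit.
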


\begin{cor} \label{basepoints}
 The polynomial $H[R](L)$ does not depend on the choice of basepoints. 
\end{cor}

The proof for both corollaries follows by noticing that, in changing direction of a component or in moving a basepoint, the only possible change is the sequence of mixed crossing switches. Indeed, suppose a basepoint is moved  before or after a crossing, say $k$ (See Figure~\ref{basepoint}). Then we have the following possibilities: if $k$ is a self-crossing then it remains a self-crossing, while if $k$ is a mixed crossing to be switched (or not) before, then it will also be switched (or not) after. So, the only difference is the order of switching mixed crossings, invariance under which is ensured by Proposition~\ref{orderxings}. 

\smallbreak
\begin{figure}[!ht]
\begin{center}
\includegraphics[width=10cm]{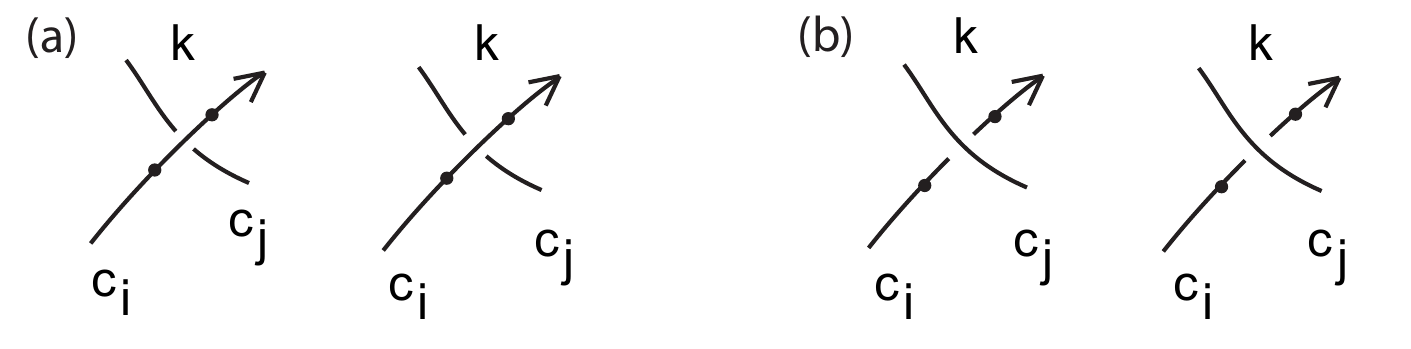}
\caption{(a) moving basepoints on an over-arc (b) moving basepoints on an under-arc}
\label{basepoint}
\end{center}
\end{figure}

\begin{prop} \label{skeinrule}
 The polynomial $H[R] (L)$ satisfies the skein relation (1) of Theorem~\ref{hofr} on mixed crossings not increasing the distance. 
\end{prop}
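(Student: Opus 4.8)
The plan is to reduce the skein identity at a single mixed crossing to exactly one application of the mixed skein relation~(\ref{mixedskein}), exploiting the freedom in the order of switches, basepoints and direction already secured by Propositions~\ref{orderxings} and~\ref{basepoints} and Corollary~\ref{direction}. Fix the mixed crossing $p$ of the oriented Conway triple $L_+, L_-, L_0$ and let $A$ and $B$ be the two components meeting at $p$. Since the evaluation is insensitive to the order of the components, I would reorder so that $A$ is the first component; its basepoint and direction may then be chosen freely, since by Proposition~\ref{basepoints} and Corollary~\ref{direction} these choices do not affect the value.

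At $p$ exactly one of the diagrams $L_+, L_-$ has $A$ passing under $B$; call it $L_{\mathrm{under}}$ and the other $L_{\mathrm{over}}$. First I would note the tautological identifications $L_{\mathrm{over}} = \sigma_p L_{\mathrm{under}}$ and $L_0 = s_p L_{\mathrm{under}}$, coming from the definitions of switching and of the oriented smoothing. Because $A$ is the first component and is the under-arc at $p$, while in the descending stack the first component lies over all others, the crossing $p$ genuinely belongs to the set of crossings of $L_{\mathrm{under}}$ that differ from those of $d L_{\mathrm{under}}$. Hence by Proposition~\ref{orderxings} the value $(L_{\mathrm{under}})$ is independent of the order in which these crossings are switched, so I may switch $p$ first. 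Applying~(\ref{mixedskein}) to that first switch gives
$$
(L_{\mathrm{under}}) = (\sigma_p L_{\mathrm{under}}) + \varepsilon z\,(s_p L_{\mathrm{under}}) = (L_{\mathrm{over}}) + \varepsilon z\,(L_0),
$$
where $\varepsilon$ is the sign of $p$ in $L_{\mathrm{under}}$.

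It then remains to identify the three evaluations with the canonical ones. The term $(L_{\mathrm{under}})$ equals $H[R](L_{\mathrm{under}})$ by Propositions~\ref{orderxings} and~\ref{basepoints}; the term $(\sigma_p L_{\mathrm{under}})$, obtained by continuing the algorithm, switches precisely the remaining distinguishing crossings of $L_{\mathrm{over}}$ (note $d L_{\mathrm{under}} = d L_{\mathrm{over}}$), so by Proposition~\ref{orderxings} it equals $H[R](L_{\mathrm{over}})$; and $s_p L_{\mathrm{under}} = L_0$ has $n-1$ crossings, so its value is well defined by the inductive hypothesis $(n-1)$. I would finish with the two sign cases: if $L_{\mathrm{under}} = L_+$ then $\varepsilon = +1$ and the identity reads $(L_+) = (L_-) + z\,(L_0)$; if $L_{\mathrm{under}} = L_-$ then $\varepsilon = -1$ and it reads $(L_-) = (L_+) - z\,(L_0)$. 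Either way $(L_+) - (L_-) = z\,(L_0)$, which is rule~(1) of Theorem~\ref{hofr}.

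The main obstacle is not the algebra, which is a single use of~(\ref{mixedskein}), but the bookkeeping that legitimizes making $p$ the first switch and guarantees that the leftover evaluations coincide with the canonical values. This is exactly the content already extracted in Propositions~\ref{orderxings} and~\ref{basepoints}, so the real work is to invoke them correctly in each sign case and to check that $p$ lies in the set of crossings separating $L_{\mathrm{under}}$ from its descending stack; once these points are dispatched the conclusion follows immediately.
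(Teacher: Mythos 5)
Your overall route is the same as the paper's: use Proposition~\ref{orderxings} to make the distinguished mixed crossing $p$ the first switch, so that the skein identity is literally one instance of the recursive definition~(\ref{mixedskein}), and then check that the three evaluations involved are the canonical well-defined values ($L_0$ by the inductive hypothesis $(n-1)$, the switched diagram by being strictly closer to the descending stack). That is exactly how the paper argues, only you spell out the bookkeeping in more detail.

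There is, however, one step you should not take: the opening move ``since the evaluation is insensitive to the order of the components, I would reorder so that $A$ is the first component.'' Independence of the order of components is Proposition~\ref{ordercpts}$(n)$, which in the paper is proved \emph{after} Proposition~\ref{skeinrule}$(n)$ and whose proof explicitly uses Proposition~\ref{skeinrule} to unlink all the other pairs of components. Invoking it here makes the induction circular. Fortunately the reordering is unnecessary: let $A$ simply be whichever of the two components meeting at $p$ comes \emph{earlier} in the given order (not necessarily first overall). In the descending stack $dL$ the earlier component lies above the later one, so the diagram $L_{\mathrm{under}}$ in which $A$ passes under $B$ at $p$ differs from $dL_{\mathrm{under}}$ at $p$; hence $p$ is among the crossings that Proposition~\ref{orderxings} allows you to switch first, and the rest of your argument, including the two sign cases, goes through verbatim. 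With that repair the proof is correct and coincides with the paper's.
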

\begin{proof}
We observe that the computer algorithm reduces distance in all diagrams involved in the skein tree for arriving at $dL$ from $L$ starting from the very first application of rule (1) on $(L)$, so they have well-defined polynomials by the inductive hypothesis $(n-1)$. More generally, if any mixed crossing of $L$ is switched, such that the distance does not increase, then  the diagrams involved in the skein relation have well-defined polynomials. Recall also the discussion after Eq.~\ref{mixedskein}. 
\end{proof}                                                                                             

\begin{prop} \label{reidem}
 The polynomial $H[R](L)$ is invariant under Reidemeister III moves  and Reidemeister II moves  that do not increase the distance beyond $n$. 
\end{prop}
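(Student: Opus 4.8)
The plan is to prove invariance of $H[R]$ under Reidemeister III and non-increasing Reidemeister II moves by reducing each move to situations already controlled by the inductive hypothesis $(n-1)$ and by the propositions already established. I would separate the argument into two main cases according to the type of crossings involved in the move, namely whether the crossings are mixed or self-crossings, since the computation algorithm for $H[R]$ treats these two types very differently: mixed crossings are resolved by Rule~(1), whereas self-crossings are only handled at the bottom level through the invariant $R$.

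First I would treat the Reidemeister III move. The key observation is that an RIII move does not change the number of crossings, so both diagrams lie in $\mathcal{L}_n$. My strategy is to apply the skein relation~(\ref{mixedskein}) to each mixed crossing participating in the triangle, thereby expressing $(L)$ as a linear combination of evaluations on diagrams with strictly fewer crossings (the smoothed terms) together with one term in which all three crossings have been switched. For the smoothed terms the inductive hypothesis $(n-1)$ guarantees well-definedness and, crucially, RIII-invariance at level $n-1$, so the smoothed contributions on the two sides of the move match by induction. For the fully-switched term the underlying generic diagram is unchanged by the RIII move up to an RIII move among diagrams that are closer to their descending stack, so Proposition~\ref{orderxings} together with the inductive hypothesis applies. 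If all three strands of the RIII belong to the same component, or if the move involves only self-crossings, then the move is invisible to the top-level skein algorithm and is absorbed entirely into the evaluation of $R$ on a descending stack, where invariance holds because $R$ itself is a regular isotopy invariant.

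Next I would treat the non-increasing Reidemeister II move, i.e. the removal of two crossings of opposite sign (the increasing direction is excluded by hypothesis, consistent with the phrasing of the inductive step). Here I distinguish again: if the two strands belong to different components, both crossings are mixed, and I would resolve one of them by~(\ref{mixedskein}); the switched term produces an RII pair that can be cancelled directly, while the smoothed term has $n-1$ crossings and is handled by the inductive hypothesis. If the two strands belong to the same component the RII move is entirely a matter of self-crossings, so it is seen only by $R$ at the bottom of the computation and invariance follows from the regular isotopy invariance of $R$ together with Rule~(2) of Theorem~\ref{hofr}.

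The main obstacle I anticipate is the bookkeeping in the RIII case when some but not all of the three crossings are mixed, because then the descending-stack structure and the order in which the algorithm switches crossings interact delicately with the move; I expect to lean heavily on Propositions~\ref{orderxings} and~\ref{basepoints} to guarantee that the sequence of switches and the choice of basepoints can be aligned on the two sides of the move so that the top-level skein expansions match term-by-term, leaving only lower-crossing diagrams to which the inductive hypothesis $(n-1)$ applies. Once the terms are matched in this way, the argument closes because every discrepancy is pushed either below $n$ crossings or into an evaluation of $R$, both of which are already under control.
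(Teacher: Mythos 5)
Your overall strategy (induction on $n$, separating mixed from self-crossings, expanding the mixed crossings of the move by the skein relation, and pushing every discrepancy either below $n$ crossings or into $R$) is the same as the paper's, but two of your steps do not go through as stated. The more serious one is in the Reidemeister~II case with both crossings mixed. In a removable RII pair one strand lies entirely above the other, so the algorithm switches either both crossings or neither. If you resolve only one of them by~(\ref{mixedskein}), the switched term is a clasp (the strand is over at one crossing and under at the other), which is \emph{not} a cancellable RII pair; and the single smoothed term, which has $n-1$ crossings, neither vanishes nor has anything on the other side of the move to match against. The paper instead switches \emph{both} crossings, obtaining $(L)=(\sigma_2\sigma_1 L)+\varepsilon z\,(s_1L)-\varepsilon z\,(s_2\sigma_1L)$ (the coefficients have opposite signs because the two crossings of an RII pair have opposite signs), and then shows that the two smoothed $(n-1)$-crossing diagrams $s_1L$ and $s_2\sigma_1L$ are planar (or regular) isotopic, so their contributions cancel and $(L)=(\sigma_2\sigma_1L)$, which is the already-descending configuration. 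Without this cancellation your argument leaves an unaccounted term of order $z$.

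Secondly, in the RIII case your justification for matching the smoothed terms is off: after smoothing a crossing of the triangle, the corresponding diagrams on the two sides of the move are no longer related by an RIII move but by a planar isotopy or by two RII moves not exceeding $n-1$ crossings (the paper's Figures~\ref{reidemIIIpf1} and~\ref{reidemIIIpf2}); the inductive hypothesis still covers them, but via well-definedness and RII-invariance at level $n-1$, not RIII-invariance. Likewise, the fully-switched term still has $n$ crossings, so the inductive hypothesis $(n-1)$ cannot be applied to it; it must instead be reduced to the ``no switch needed'' case, where the move is invisible to the algorithm and is carried through the entire skein tree, being absorbed by the inductive hypothesis on the lower-crossing branches and by the regular isotopy invariance of $R$ on the terminal descending stack $dL$. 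Note also that the paper never expands all three crossings: the case analysis by the order of the components shows that at most two are switched, and the order in which they are switched matters (switching the wrong one first destroys the RIII configuration), which is precisely where Proposition~\ref{orderxings} is needed to fix the labelling before and after the move.
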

\begin{proof}
{\it Type II.} \ View Figure~\ref{reidemII}. The case $i=j$ means that the move takes place on the same component $c_i$, so the move is visible only on the level of $R$, which is known to be a link invariant.

 In the case $j<i$ in the left-hand illustration of Figure~\ref{reidemII} no mixed crossing gets switched by the algorithm. So, applying this move on all diagrams created in the skein tree, we have by the inductive hypothesis $(n-1)$ that $H[R]$ remains invariant on these diagrams and their equivalent counterparts. Hence, $(L^\prime) = (L)$. 

\smallbreak
\begin{figure}[H] 
\begin{center}
\includegraphics[width=7.5cm]{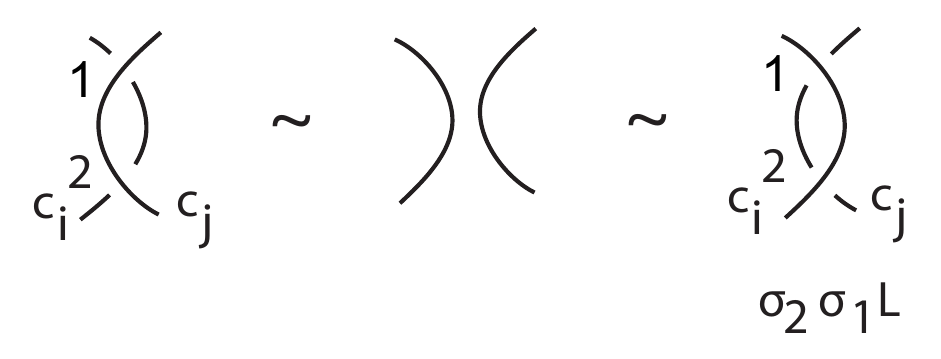}
\caption{The Reidemeister II moves}
\label{reidemII}
\end{center}
\end{figure}

Let now $i<j$ in the left-hand illustration of Figure~\ref{reidemII}. This means that, by the algorithm, the two mixed crossings  in the figure will be switched. Note that the two crossings have opposite signs independently of the choices of orientations.  By Proposition~\ref{orderxings}, the two mixed crossings can be switched first and their order is irrelevant, so we  label them 1 and 2. Let now $\varepsilon$ be the sign of crossing 1. Then we distinguish two cases:

If $\varepsilon = +1$ we have:
$
(L)  = (\sigma_1 L) + z (s_1 L)  
  =  (\sigma_2 \sigma_1 L) - z (s_2 \sigma_1 L) + z (s_1 L). $
Examining Figure~\ref{reidemIIproof}(a) we see that the diagrams  $s_2 \sigma_1 L$ and $s_1 L$ differ just by planar isotopy. So we have $(s_2 \sigma_1 L) = (s_1 L)$.  Hence, $(L) = (\sigma_2 \sigma_1 L)$ and the situation is reduced to the case $j<i$.

\smallbreak
\begin{figure}[H]
\begin{center}
\includegraphics[width=9cm]{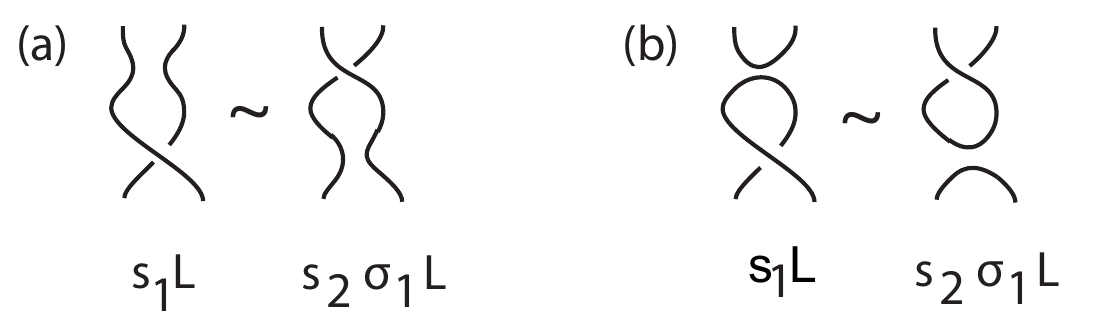}
\caption{Invariance of $H[R]$ under the Reidemeister II moves}
\label{reidemIIproof}
\end{center}
\end{figure}

If $\varepsilon = -1$ we have from Figure~\ref{reidemIIproof}(b):
$
(L)  =  (\sigma_1 L) - z (s_1 L)  
  =   (\sigma_2 \sigma_1 L) + z (s_2 \sigma_1 L) - z (s_1 L). 
$
The diagrams $s_2 \sigma_1 L$ and $s_1 L$ both contain a positive kink.  Now, this kink is on the same component, since the two components $c_i$ and $c_j$ are merged into one, and of the same handedness. So, the two diagrams  are regular isotopic and the isotopy is only visible on the level of $R$, which is known to be a link invariant. Hence, by the inductive hypothesis $(n-1)$  we conclude $(s_2 \sigma_1 L) = (s_1 L)$. So, $(L) = (\sigma_2 \sigma_1 L)$ and, again, the situation is reduced to the case $j<i$.
 Hence, the end polynomials computed before and after the move are the same also in this case.

\smallbreak

\noindent {\it Type III.} \  Suppose that no mixed crossing switch is  involved, so the algorithm does not see the move. This can happen, for example, in the case  $k = j = i$ and the invariance of $H[R]$ under the move rests on the invariance of $R$. Also in the case $ k\leq j \leq i$ but with not all arcs in the same component. Then there is nothing to do. View Figure~\ref{reidemIII1}, but  ignore for the time  the crossing marked with 1 and a shaded disc.

Suppose now that one mixed crossing switch is required. This means that not all arcs belong to the same component. Assume that $j < k$ and $j < i$ and that the mixed crossing marked with a shaded disc in Figure~\ref{reidemIII1} has to be switched.  By  Proposition~\ref{orderxings} we can label this crossing by 1 before and after the move is performed. The two diagrams with crossing 1 switched that differ by one Reidemeister III move  fall now in the previous case. We shall follow  what happens to the two diagrams with the corresponding smoothings of the marked crossing. Note that the crossing in question retains its sign after the move is performed. Hence the computations are the same  throughout both skein trees deriving from the diagrams with the mixed crossing switched.  In  Figure~\ref{reidemIIIpf1} we see the two possibilities according to the compatibility of orientations. In both cases the components $c_j$ and $c_k$ merge into one. In  case (a) the resulting diagrams are planar isotopic. In case (b) they differ by two Reidemeister II moves not increasing the distance. Hence, by the inductive hypothesis $(n-1)$, both configurations will be assigned the same polynomials.

\smallbreak
\begin{figure}[H]
\begin{center}
\includegraphics[width=7cm]{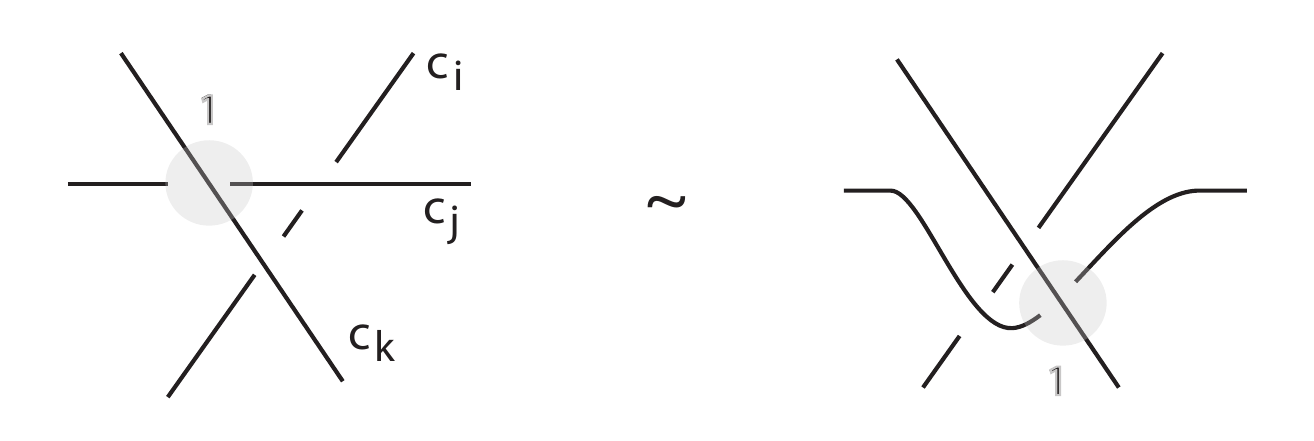}
\caption{The Reidemeister III moves: case 1}
\label{reidemIII1}
\end{center}
\end{figure}

\smallbreak
\begin{figure}[H]
\begin{center}
\includegraphics[width=12cm]{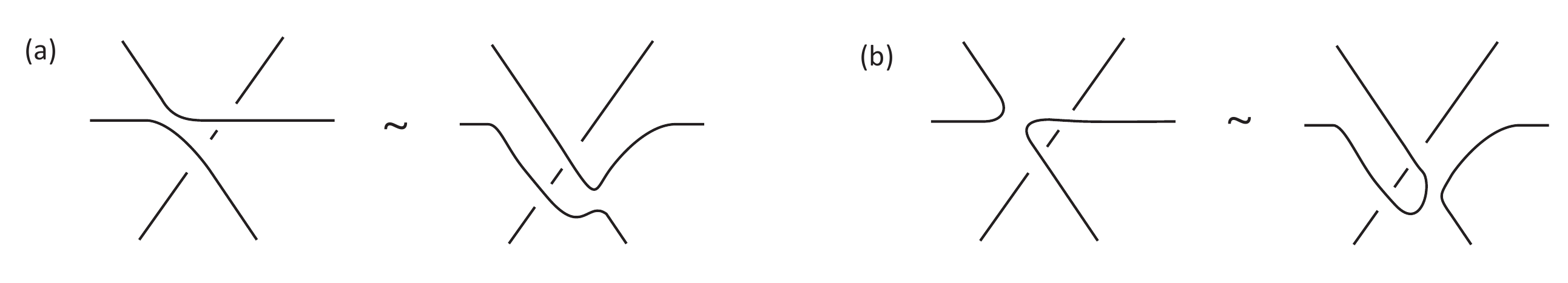}
\caption{Invariance of $H[R]$ under the Reidemeister III moves - case 1}
\label{reidemIIIpf1}
\end{center}
\end{figure}

Suppose now that two mixed crossing switches will be needed. This means that not all arcs belong to the same component. Assume that $i <  k \leq j$. Then the crossings marked with  shaded  discs in Figure~\ref{reidemIII2} have to be switched. Note that the two crossings retain their signs after the move is performed. By Proposition~\ref{orderxings} we can label these crossings by 1 and 2 before and after the move. 

\smallbreak
\begin{figure}[H]
\begin{center}
\includegraphics[width=6.7cm]{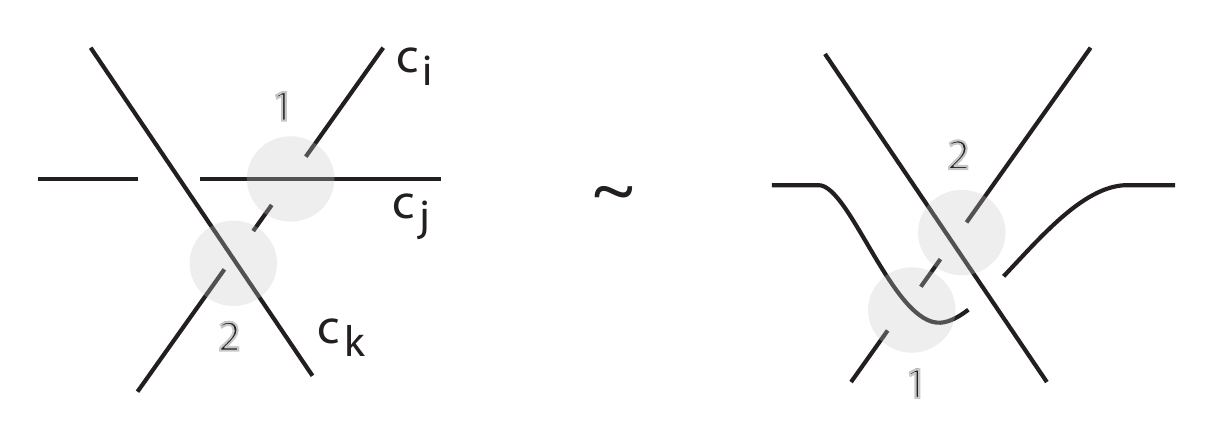}
\caption{The Reidemeister III moves: case 2}
\label{reidemIII2}
\end{center}
\end{figure}

We first apply skein rule (1) on crossing 1 on both sides of the move.  The analysis of the two resulting diagrams with the crossing switched reduces to the case where only one mixed crossing has to be switched (crossing 2). Let us follow now what happens to the two diagrams with the smoothings of crossing 1. 
 Figure~\ref{reidemIIIpf2} illustrates the two possibilities according to the compatibility of orientations. In  case (a) the resulting diagrams are planar isotopic. In case (b) they differ by two non increasing distance Reidemeister II moves. In both cases the components $c_i$ and $c_j$ merge into one, so the isotopy is only `seen' on the level of $R$. So, by the inductive hypothesis $(n-1)$, both configurations will be assigned the same polynomials. Hence, the proof is concluded. 

\smallbreak
\begin{figure}[H]
\begin{center}
\includegraphics[width=12cm]{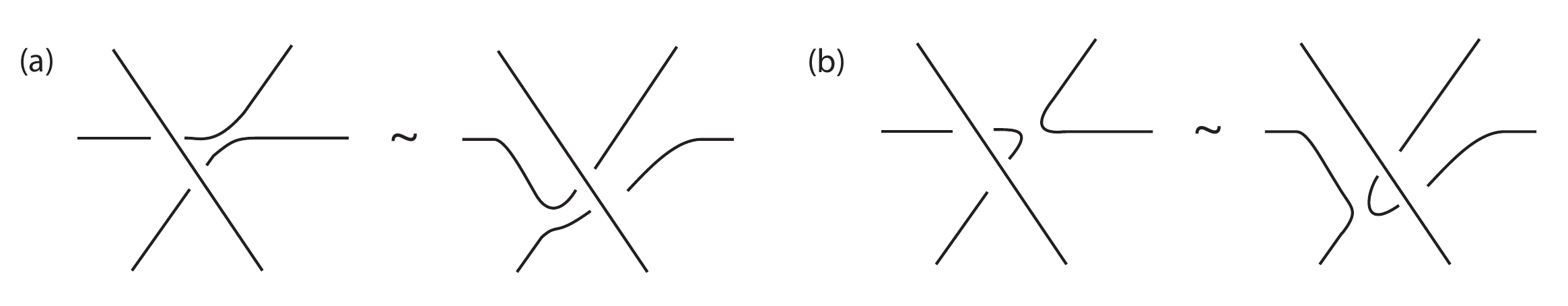}
\caption{Invariance of $H[R]$ under the Reidemeister III moves - case 2 }
\label{reidemIIIpf2}
\end{center}
\end{figure}

It is worth noting here that the logic we followed for the proof of this last case with two crossing switches would not have worked if the order of the crossings in question were reversed. Indeed, if crossing 2 were to be switched first, no Reidemeister III move would be available on the initial diagram. \end{proof}



\begin{prop} \label{ordercpts}
 The polynomial $H[R](L)$ is independent of the order of the components of $L$.
\end{prop}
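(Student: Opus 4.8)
The plan is to reduce the statement to a single transposition of two consecutive components and then to show that such an elementary swap leaves the evaluation unchanged by transporting the comparison to a descending stack, where all components are unlinked. Since every permutation of the components is a product of transpositions of consecutive components, it suffices to treat one such swap. So fix $L\in\mathcal{L}_n$ on components $c_1,\ldots,c_r$, put $a:=c_i$ and $b:=c_{i+1}$, let $O$ be the given order and $O'$ the order obtained by interchanging $a$ and $b$ with all other data equal (by Corollary~\ref{direction} and Proposition~\ref{basepoints} the choice of directions and basepoints is immaterial, so we may keep them fixed). Write $(L)$ and $(L)'$ for the polynomials produced by the computing algorithm under $O$ and $O'$, and $dL$, $dL'$ for the two terminal descending stacks; the goal is $(L)=(L)'$. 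The first observation is that interchanging two \emph{consecutive} components does not alter the relative order of any other pair, so every switching decision of the algorithm coincides for $O$ and $O'$ except at the mixed crossings between $a$ and $b$.

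The core step is an invariance of the difference $(L)-(L)'$ under the switching of \emph{any} mixed crossing. Let $x$ be a mixed crossing of $L$ with sign $\varepsilon_x$. By Proposition~\ref{skeinrule} the relation~(\ref{mixedskein}) holds under either order, so $(L)=(\sigma_x L)+\varepsilon_x z\,(s_x L)$ and $(L)'=(\sigma_x L)'+\varepsilon_x z\,(s_x L)'$, where the sign $\varepsilon_x$ is the same in both since it depends only on the (fixed) orientations. The smoothed diagram $s_x L$ lies in $\mathcal{L}_{n-1}$, so by the inductive hypothesis $(n-1)$ its evaluation is order-independent, i.e. $(s_x L)=(s_x L)'$, and the $z$-terms cancel upon subtraction. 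Hence $(L)-(L)'=(\sigma_x L)-(\sigma_x L)'$ for every mixed crossing $x$. Because $dL$ differs from $L$ only at mixed crossings (the algorithm switches mixed crossings alone), I can iterate this identity along the sequence of switches that turns $L$ into $dL$, each intermediate switch being again a mixed crossing, and conclude $(L)-(L)'=(dL)-(dL)'$.

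It then remains to prove $(dL)=(dL)'$, and here I would use that $dL$ is a descending stack and is therefore regular isotopic to the completely split union $\mathcal{K}^r=\sqcup_{j=1}^r K_j$ of its knotted components. This isotopy is realized by a sequence of Reidemeister~II and~III moves that never raise the crossing number above $n$: one peels the components off from the top of the stack, using III to rearrange and II to cancel the mixed crossings of the topmost (hence everywhere-over) component. By Proposition~\ref{reidem} and the inductive hypothesis each such move preserves both evaluations, so $(dL)=(\mathcal{K}^r)$ and $(dL)'=(\mathcal{K}^r)'$. But $\mathcal{K}^r$ has no mixed crossings at all, so the algorithm performs no switches and the two orders yield identical computations (alternatively one invokes Proposition~\ref{orderxings}); thus $(\mathcal{K}^r)=(\mathcal{K}^r)'=E^{1-r}R(\mathcal{K}^r)$. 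Combining the three displays gives $(dL)=(dL)'$ and hence $(L)=(L)'$, which is the assertion.

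The main obstacle I expect is the reduction of the third paragraph: one must verify carefully that a descending stack can be split apart into its components by Reidemeister~II and~III moves that respect the bound of $n$ crossings, so that Proposition~\ref{reidem} and the inductive hypothesis genuinely apply; in particular one has to check that peeling the top component off never forces a transient increase of the crossing number. The identity of the second paragraph is exactly what makes this tolerable: it moves the comparison of the two orders away from the linked diagram $L$, where the mixed crossings between $a$ and $b$ cannot in general be removed, to the split stack $dL$, where they can. I would also record, as a preliminary, that Proposition~\ref{skeinrule} is available at an arbitrary mixed crossing under either order, which is precisely what its proof via Proposition~\ref{orderxings} supplies.
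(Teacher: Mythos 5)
Your reduction in the second paragraph is sound and in fact a clean variant of the paper's setup: the identity $(L)-(L)'=(\sigma_x L)-(\sigma_x L)'$ at any mixed crossing, justified by Proposition~\ref{skeinrule} for each order separately and by the inductive hypothesis $(n-1)$ applied to the smoothed diagram, iterates correctly to give $(L)-(L)'=(dL)-(dL)'$. The genuine gap is in the third paragraph. You assert that the descending stack $dL$ can be carried to the split union $\mathcal{K}^r$ by Reidemeister~II and~III moves that never raise the crossing number above $n$, so that Proposition~\ref{reidem} applies to both evaluations. This is exactly the kind of monotone-simplification claim that cannot be taken for granted: peeling an everywhere-over component off the stack is an isotopy in $3$-space whose planar shadow may be forced through diagrams with more than $n$ crossings (the same phenomenon that produces ``hard'' diagrams of split links), and nothing in the inductive framework supplies such a bounded-crossing isotopy. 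You flag this yourself as the main obstacle, but the proof as written leaves it unresolved, and the whole argument hinges on it.

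The paper's proof never isotopes the descending stack at all, and the device it uses is available to complete your own reduction. Compute $(dL)'$ directly by running the $O'$-algorithm on the diagram $dL$: one switches the mixed crossings between $A$ and $B$ (all $A$-over) to reach the stack $\frac{B}{A}$, obtaining $(dL)'=(\frac{B}{A})+z\sum_i\varepsilon_i (L_{0,i})$, while $(dL)=E^{1-r}R(dL)=E^{1-r}R(\tfrac{B}{A})=(\tfrac{B}{A})$ by the well-definedness of the invariant $R$. Each smoothed diagram $L_{0,i}$ is a descending stack with $n-1$ crossings and $r-1$ components, so the inductive hypothesis gives $(L_{0,i})=E^{2-r}R(L_{0,i})$; running the identical switching sequence with the skein relation of $R$ itself yields $w\sum_i\varepsilon_i R(L_{0,i})=R(dL)-R(\frac{B}{A})=0$, hence the correction terms vanish and $(dL)'=(dL)$. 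This is precisely the $X$/$Y$ comparison in the paper's Proposition~\ref{ordercpts} (there carried out on the pair $AB$, $BA$ before descending to stacks); it replaces your unproved isotopy claim with an appeal to the well-definedness of $R$, which is exactly what the two-level structure of $H[R]$ makes available.
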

\begin{proof}
Suppose  that a different order of components is assigned to $L$ and let $L^\prime$ denote the resulting generic diagram. Note that, in general, $L^\prime$ will have different distance from $d L^\prime$ than $L$ from $d L$. By a similar argument as in the proof of Proposition~\ref{orderxings}, it suffices to prove the proposition in the case where the relative positions of only one pair of adjacent components, say $A$ and $B$,  is switched. By Proposition~\ref{orderxings} we may assume that the mixed crossings between $A$ and $B$ are last in the two sequences.  So, we have done the unlinking of all pairs of components of $L$ and $L^\prime$ except for the pair $A, B$. Moreover, in both resolution trees the coefficients in the computations toward $(L)$ and $(L^\prime)$ are identical up to the point where we have to start unlinking  $A$ and $B$.   For the resulting diagrams in both skein trees  we apply the inductive hypothesis, whereby the order of the components $A$ and $B$ does not affect the values of their polynomials. 

\smallbreak

Let us  denote the two maximal crossing diagrams in the above two resolution trees by $AB$ and $BA$ respectively. In $AB$ component $A$ is prior to $B$ in the order of $L$, while in $BA$ component $B$ is prior to $A$ in the order of $L^\prime$. By the above reasoning we may assume that $L = AB$ and $L^\prime = BA$. 

Let $r+r^\prime$ be the total number of mixed crossings between components $A$ and $B$ and let $r$ be the number of mixed crossings between $A$ and $B$ that need to be switched in $AB$ so that the two components get unlinked and $A$ lies above $B$. By Proposition~\ref{orderxings} we may assign to these crossings the numbers $1, \ldots, r$ and let  $\varepsilon_1, \ldots, \varepsilon_r$ be their signs. After switching them all we end up with the final descending stack, for which we will use the notation  $\frac{A}{B}$ and in which $A$ is before $B$ in the given order and it lies above $B$. 
 Further, $r^\prime$ is the number of mixed crossings between $B$ and $A$ that need to be switched in $BA$ so that the two components get unlinked and $B$ lies above $A$. We may assign to these crossings the numbers $r+1, \ldots, r+r^\prime$ and we let  $\varepsilon_{r+1}, \ldots, \varepsilon_{r+r^\prime}$ be their signs.  After switching them all  we end up with the descending stack for which we will use the notation $\frac{B}{A}$ and in which  $B$ lies above $A$ and it comes before $A$. 

More precisely,  we start by selecting in $AB$ the crossing numbered 1 and we rename $AB$ to $L_{\varepsilon_1}$. After switching crossing~1 we select the crossing numbered 2 and we proceed similarly until we reach crossing~$r$. After switching crossing $r$ we obtain the diagram  $L_{-\varepsilon_r}$, which is in fact $\frac{A}{B}$. Namely, we have the sequence of generic diagrams with component $A$ coming before $B$:
\begin{equation} \label{1tor}
\begin{array}{lclcl}
(AB) & := & (L_{\varepsilon_1})  & = & (L_{-\varepsilon_1})  + \varepsilon_1\, z \, (L_{0,1}), \\
 (L_{-\varepsilon_1})  & := & (L_{\varepsilon_2}) & = & (L_{-\varepsilon_2})  + \varepsilon_2\, z \, (L_{0,2}), \\
 & \vdots & & & \\
(L_{-\varepsilon_{r-1}}) & := & (L_{\varepsilon_{r}}) & = & (L_{-\varepsilon_{r}})  + \varepsilon_{r}\, z \, (L_{0,r}) \\ 
&  &  & = & (\frac{A}{B})  + \varepsilon_{r}\, z \, (L_{0,r}).
\end{array}
\end{equation}
At the same time we select in  $BA$ the crossing $r+1$ for switching, so we rename $BA$ to $L^\prime_{\varepsilon_{r+1}}$. Then, we rename $L^\prime_{-\varepsilon_{r+1}}$ to $L^\prime_{\varepsilon_{r+2}}$ and we select in it the crossing $r+2$. Proceeding in this manner we arrive at the final step of the process that yields the descending stack   $\frac{B}{A}$, indicated below as $L^\prime_{-\varepsilon_{r+ r^\prime}}$, after switching the crossing numbered $r+ r^\prime$ in the diagram $L^\prime_{-\varepsilon_{r+ r^\prime - 1}}$ (renamed to $L^\prime_{\varepsilon_{r+ r^\prime}}$). Namely, we have the sequence of generic diagrams with component $B$ coming before $A$:
\begin{equation} \label{rplus1torprime}
\begin{array}{lclcl}
(BA) & := & (L^\prime_{\varepsilon_{r+1}})  & = & (L^\prime_{-\varepsilon_{r+1}})  + \varepsilon_{r+1} \, z \, (L^\prime_{0, r+1}), \\
 (L^\prime_{-\varepsilon_{r+1}}) & := & (L^\prime_{\varepsilon_{r+2}})  & = & (L^\prime_{-\varepsilon_{r+2}})  + \varepsilon_{r+2} \, z \, (L^\prime_{0, r+2}), \\
 & \vdots & & & \\
(L^\prime_{-\varepsilon_{r+ r^\prime - 1}}) & := & (L^\prime_{\varepsilon_{r+ r^\prime}})  & = & (L^\prime_{-\varepsilon_{r+ r^\prime}})  + \varepsilon_{r+ r^\prime}\, z \, (L^\prime_{0, r+ r^\prime}) \\
&  &  & = & (\frac{B}{A})  + \varepsilon_{r+ r^\prime}\, z \, (L^\prime_{0, r+ r^\prime}).
\end{array}
\end{equation}
Substituting now the expressions in (\ref{1tor}) consecutively, starting from the last equation, we obtain:
  \begin{equation} \label{ABtoAoverB}
(AB) = \left( \frac{A}{B} \right) + \, z  \, \left[ \varepsilon_1 \, (L_{0,1})  +  \cdots  + \varepsilon_{r}\, (L_{0,r}) \right].
\end{equation}
Analogously, from (\ref{rplus1torprime})  we obtain:
  \begin{equation} \label{BAtoBoverA}
(BA) = \left( \frac{B}{A} \right) + \, z  \, \left[ \varepsilon_{r+1} \, (L^\prime_{0,r+1}) + \cdots + \varepsilon_{r+ r^\prime}\, (L^\prime_{0,r+ r^\prime}) \right].
\end{equation}
 Denoting now: 
\begin{equation} \label{XandY}
(X) := \varepsilon_1 \, (L_{0,1})  +  \cdots + \varepsilon_{r}\, (L_{0,r}) \quad {\rm and } \quad (Y) :=  \varepsilon_{r+1} \, (L^\prime_{0,r+1}) + \cdots + \varepsilon_{r+ r^\prime}\, (L^\prime_{0,r+ r^\prime}),
\end{equation}	 
Eqs.~\ref{ABtoAoverB} and~\ref{BAtoBoverA} are shortened to the following:
  \begin{equation} \label{ABX}
(AB) = \left( \frac{A}{B} \right) + \, z  \, (X),
\end{equation}
  \begin{equation} \label{BAY}
(BA) = \left( \frac{B}{A}  \right) + \, z  \, (Y).
\end{equation}

Subtracting equations (\ref{ABX}) and (\ref{BAY}) by parts we obtain:
\begin{equation} \label{ABvsBAXY}
 (AB) - (BA) = \left(\frac{A}{B} \right) - \left(\frac{B}{A} \right) + z \, [(X) - (Y)].
\end{equation}
Further, we observe that  the descending stacks  $\frac{A}{B}$ and $\frac{B}{A}$ are both assigned the same value of $H[R]$ since, by the basis of induction  we have: 
\begin{equation} \label{reductiontor}
\left(\frac{A}{B} \right) = E^{1-c} \, R\left(\frac{A}{B} \right) \quad \& \quad \left(\frac{B}{A} \right) = E^{1-c} \, R\left(\frac{B}{A} \right),
\end{equation}
where $c$ is the number of components in both descending stacks. But, by the well-definedness of the link invariant $R$ it is ensured that 
$R(\frac{A}{B}) = R(\frac{B}{A})$. So $ (\frac{B}{A}) = (\frac{A}{B})$. So, (\ref{ABvsBAXY}) becomes:
\begin{equation} \label{ABBAonlyXY}
 (AB) - (BA) =  z \, [(X) - (Y)].
\end{equation}

In order to prove further that $(X) = (Y)$ we argue as follows: we do the same procedure as above, switching and smoothing progressively all  $r$ crossings starting from $AB$ and all $r^\prime$ crossings starting from $BA$, but this time applying the skein relation of the invariant $R$.  We obtain equations of the same form as (\ref{1tor}) and (\ref{rplus1torprime}), but now $z$ is replaced by $w$ and the invariant $R$ is evaluated on all diagrams. Summing up we obtain: 
\begin{equation} \label{evalR}
 R(AB) - R(BA) =  w \, [R(X) - R(Y)],
\end{equation}
where $R(X) := \varepsilon_1 \, R(L_{01})  +  \cdots + \varepsilon_{r}\, R(L_{0r})$  and  $R(Y) :=  \varepsilon_{r+1} \, R(L^\prime_{0,r+1}) + \cdots + \varepsilon_{r+ r^\prime}\, R(L^\prime_{0,r+ r^\prime})$. Clearly, by the well-definedness of $R$ we have $R(AB) = R(BA)$.
 Note, now, that all intermediate generic diagrams in (\ref{1tor}) and (\ref{rplus1torprime}) that come from smoothings are descending stacks of $c-1$ components,  since the components $A$ and $B$ have merged into one. So, by the induction basis:
\begin{equation} \label{RonLi}
(L_{0,i}) = E^{2-c} \, R(L_{0,i}),
\end{equation}
for all $i=1,\ldots, r+ r^\prime$. Multiplying then Eq.~\ref{evalR} by $E^{2-c}$ we obtain:
\begin{equation} \label{equalXY}
(X) - (Y) = 0.
\end{equation}
Substituting (\ref{equalXY}) in (\ref{ABBAonlyXY})   we  finally obtain $ (AB) =  (BA)$ and the proof of the Proposition is concluded.
\end{proof}

By Propositions \ref{orderxings}, \ref{skeinrule}, \ref{reidem}, \ref{ordercpts} and Corollaries~\ref{direction} and \ref{basepoints}  the proof of Theorem~\ref{hofr} is now completed. \hfill {\it Q.E.D.}

\begin{rem} \rm
The places in the proof of Theorem~\ref{hofr} where the actual properties of the invariant $R$ were intrinsically used, beyond rule (2) of the Theorem, were in the proofs of Propositions~\ref{orderxings} and~\ref{ordercpts}, where it was essential that $R$ satisfies the same form of skein relation as $H[R]$. However, nowhere in the proof was it forced that $R$ has the same indeterminate $z$ as  $H[R]$. 
\end{rem}

\subsubsection{Translation to Ambient Isotopy} \label{generalambp}

In this subection we define and discuss the ambient isotopy generalized invariant, counterpart of the regular isotopy generalized invariant $H[R]$ constructed above.
 Let $P$ denote the classical Homflypt polynomial. Then, as we know, one can obtain the ambient isotopy invariant $P$ from its regular isotopy counterpart $H$ via the formula: 
$$
P (L) := a^{-wr (L)} H (L),
$$ 
where $wr (L)$ is the total writhe of the oriented diagram $L$. 
Analogously, and letting $G$ denote $P$ but with different variable, from our generalized regular isotopy invariant $H[R]$ one can derive an ambient isotopy invariant $P[G]$ via:
\begin{equation}\label{prfromhr}
P[G] (L) :=  a^{-wr (L)} H[R] (L).
\end{equation}

Then for the invariant $P[G]$ we have the following:

\begin{thm} \label{pofr}
Let $P (z,a)$ denote the Homflypt polynomial and let $G (w,a)$ denote  the same invariant but with a different parameter $w$ in place of $z$.  Then there exists a unique ambient isotopy invariant of classical oriented links $P[G]: \mathcal{L} \rightarrow \Z[z, w, a^{\pm 1}, E^{\pm 1}]$ defined by the following rules:
\begin{enumerate}
\item On crossings involving different components the following skein relation holds:
$$
 a  \, P[G](L_+) - {a}^{-1} \, P[G](L_-) = z \, P[G](L_0),
$$
where $L_+$, $L_-$, $L_0$ is an oriented Conway triple.
\item For ${\mathcal K}^r := \sqcup_{i=1}^r K_i$, a union of $r$ unlinked knots, with $r \geq 1$, it holds that:
$$
P[G]({\mathcal K}^r) =  E^{1-r} \, G({\mathcal K}^r).
$$
\end{enumerate}
\end{thm}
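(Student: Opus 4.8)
The plan is to \emph{define} $P[G]$ by the writhe-normalization formula (\ref{prfromhr}), i.e.\ $P[G](L) := a^{-wr(L)} H[R](L)$, and to deduce every assertion of the theorem from Theorem~\ref{hofr} together with the elementary behaviour of the writhe under Reidemeister moves. Since $H[R]$ is already a well-defined regular isotopy invariant valued in $\mathcal{Z}$, and $wr(L)$ is itself invariant under Reidemeister moves II and III, the right-hand side of (\ref{prfromhr}) is automatically invariant under planar isotopy and under Reidemeister~II and~III. Thus the only genuinely new point for well-definedness is invariance under the Reidemeister~I move (insertion/removal of a curl); once that is secured one must verify rules (1) and (2) and uniqueness.

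For the Reidemeister~I invariance I would first establish the auxiliary fact that $H[R]$ multiplies by $a^{\pm 1}$ when a positive (resp.\ negative) curl is inserted on a component of an arbitrary diagram $L$. This is exactly where the two-level structure of the algorithm of Subsection~\ref{algorithm} does the work: a curl is a \emph{self}-crossing, and the algorithm never switches self-crossings, only mixed crossings. Hence inserting the curl leaves the entire skein-resolution tree unchanged, with the same coefficients $p(\ell)$; the curl merely rides along into each terminal union of unlinked knots $\ell$, where the evaluation passes to $R$ via rule (2). By rule (R2)/(R4), $R(\ell)$ scales by $a^{\pm 1}$ under the curl, so $H[R](L) = \sum_k E^{1-k}\sum_\ell p(\ell)\,R(\ell)$ scales by $a^{\pm 1}$ as well. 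Since a positive (resp.\ negative) curl raises (resp.\ lowers) $wr(L)$ by $1$, the prefactor $a^{-wr(L)}$ exactly cancels this change, so $P[G]$ is unchanged and is therefore an ambient isotopy invariant.

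Next I would derive rule (1) by bookkeeping the writhe across an oriented Conway triple. Writing $w_0$ for the algebraic sum of the signs of all crossings other than the distinguished one, we have $wr(L_+) = w_0 + 1$, $wr(L_-) = w_0 - 1$ and $wr(L_0) = w_0$. Substituting $H[R](L_\pm) = a^{wr(L_\pm)} P[G](L_\pm)$ and $H[R](L_0) = a^{w_0} P[G](L_0)$ into the mixed skein relation $H[R](L_+) - H[R](L_-) = z\,H[R](L_0)$ of Theorem~\ref{hofr} and dividing through by $a^{w_0}$ yields precisely $a\,P[G](L_+) - a^{-1}P[G](L_-) = z\,P[G](L_0)$. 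Rule (2) is even more direct: since $G$ is the ambient isotopy normalization of $R$, i.e.\ $G(L) = a^{-wr(L)} R(L)$, we obtain $P[G]({\mathcal K}^r) = a^{-wr({\mathcal K}^r)} H[R]({\mathcal K}^r) = a^{-wr({\mathcal K}^r)} E^{1-r} R({\mathcal K}^r) = E^{1-r} G({\mathcal K}^r)$, using rule (2) of Theorem~\ref{hofr}.

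Finally, uniqueness transfers from Theorem~\ref{hofr}: any ambient isotopy invariant obeying rules (1) and (2) above, when multiplied by $a^{wr(L)}$, obeys rules (1) and (2) of Theorem~\ref{hofr} (run the same writhe computation in reverse), and the latter pin it down uniquely; hence $P[G]$ is the unique such invariant. I expect the only step needing genuine care — as opposed to routine normalization bookkeeping — to be the curl lemma of the second paragraph, namely confirming that the factor $a^{\pm 1}$ produced at the $R$-level by a self-crossing curl propagates cleanly through the whole double-level evaluation of $H[R]$ without interacting with the mixed-crossing coefficients $p(\ell)$.
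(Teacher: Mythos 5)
Your proposal is correct and follows the same route the paper (implicitly) takes: $P[G]$ is defined by the writhe normalization $P[G](L)=a^{-wr(L)}H[R](L)$ of (\ref{prfromhr}), and all assertions of Theorem~\ref{pofr} are transferred from Theorem~\ref{hofr} by the standard curl/writhe bookkeeping. The paper leaves these verifications unstated; your write-up, including the observation that a curl is a self-crossing and so rides untouched through the mixed-skein resolution down to the $R$-level where rule (R2) produces the factor $a^{\pm 1}$, supplies exactly the details the paper omits.
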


\begin{rem} \label{specializations} \rm 
As pointed out in the Introduction, in Theorem~\ref{hofr}  we could specialize the $z$, the $w$, the $a$ and the $E$ in any way we wish. For example, if $a=1$ then $R (w,1)$ becomes the Alexander--Conway polynomial, while if $w = \sqrt{a} - 1/\sqrt{a}$ then $R (\sqrt{a} - 1/\sqrt{a} , a)$ becomes the unnormalized Jones polynomial. In each case $H[R]$ can be regarded as a generalization of that polynomial. 
Furthermore, in the case where $G(w,a) = P(z,a)$  (for $w=z$) the ambient isotopy invariant $P[P]$ coincides with the new 3-variable link invariant $\Theta(q, \lambda, E)$ \cite{chjukala}, while  for $w=z$ and  $E=1/d$, $P[P]$ coincides with  the invariant $\Theta_d$ \cite{jula2} (for $E=1$ it coincides with $P$), recall \S~\ref{sectheta}. So, our invariant $P[G]$ is stronger than $P$ and it is a (seemingly) 4-variable generalization of the invariant  $\Theta$. As we shall see below (Proposition~\ref{topequivh}) one variable is redundant. Hence, our proof of the existence of  $H[R]$ provides a direct skein-theoretic proof of the existence of the invariant $\Theta$, without the need of algebraic tools or the theory of tied links. 
 Finally, for $w=z = \sqrt{a} - 1/\sqrt{a}$ the invariant $P[P]$ can be renamed to $V[V]$, $V$ denoting the  ambient isotopy version of the Jones polynomial, and it coincides with the new 2-variable link invariant $\theta(a, E)$ \cite{goula2}, which generalizes and is stronger than $V$. 
\end{rem}

\subsection{Generalization of the Dubrovnik and the Kauffman polynomials} \label{generalregkd}

In the Introduction we have given the skein relations for the regular isotopy invariants for unoriented  links, $D[T]$ and $K[Q]$, which generalize the Dubrovnik polynomial, $D$, and the Kauffman polynomial, $K$, respectively, and we have explained how to use the formalism of these skein relations to calculate these invariants. It is the purpose of this section to give a rigorous proof for the well-definedness of   $D[T]$ and $K[Q]$, thereby proving Theorems~\ref{doft} and \ref{kofq}.

We recall the notations $\mathcal{L}^u$ for the class of unoriented links and $\mathcal{Z} := \Z[z, w, a^{\pm 1}, E^{\pm 1}]$ for the ring of finite Laurent polynomials in four indeterminates $z, w, a, E$. 

For proving Theorems~\ref{doft} and \ref{kofq} we keep the same notations and we follow the same method as in \S~\ref{generalregh}. So, we will avoid repetitions and we will only elaborate on the differences in the proofs. The computing algorithm for $D[T]$ and $K[Q]$ is analogous to the one in \S~\ref{algorithm} for $H[R]$, where in Step~2 the rules  of Theorem~\ref{doft} and of Theorem~\ref{kofq} now apply for $D[T]$ and $K[Q]$  respectively.

\subsubsection{Skein theory for the polynomial $D[T]$ - Proof of Theorem~\ref{doft}}\label{generalregd}

 In the proofs that follow, the evaluation $D[T](L) \in \mathcal{Z}$ on a generic link diagram $L \in \mathcal{L}^u$ will be shortened to $(L)$. 

Moreover, let  $\varepsilon$ denote the type of a mixed crossing in $L$. Then rule~(1) of Theorem~\ref{doft} can be re-written as: 
\begin{equation}\label{dmixedskein}
(L_{\varepsilon}) = (L_{-\varepsilon})  + \varepsilon \, z \, \big[(L_0) - (L_{\infty})\big].
\end{equation}

Adapting Proposition~\ref{orderxings} and Corollary~\ref{direction} to $D[T]$, for a given mixed crossing $i$ of a diagram $L \in \mathcal{L}^u$ we denote by $\sigma_i L$ the same diagram but with that crossing switched and by 
$$
s_i L := a_i L - b_i L,
$$ 
the formal difference of the  diagrams $a_i L$ and $b_i L$, which are the same as $L$  but with the $A$- and $B$-smoothing respectively replacing crossing $i$. In this notation we have the polynomial equation  $(s_i L) = (a_i L) - (b_i L)$. Then the proof carries through with the same formal expressions as in Proposition~\ref{orderxings}, concluding with equality of $(L)$ and $(L^\prime)$.

\smallbreak
 Corollary~\ref{basepoints} and Proposition~\ref{skeinrule} adapt directly for  $D[T]$. Concerning now invariance under the Reidemeister moves, we adapt the proof of Proposition~\ref{reidem} and we first check Reidemeister~II moves. Looking at the left-hand instance of Figure~\ref{reidemII} for the case $i<j$, we note first that the two mixed crossings to be switched are of opposite type. Proceeding the analysis we obtain the final equation:
\begin{equation}\label{dreidemII}
(L) = (\sigma_2 \sigma_1 L) - z \, \big[ (a_2 \sigma_1 L) - (b_2 \sigma_1 L) \big] + z \, \big[ (a_1 L ) - (b_1 L ) \big],
\end{equation}
which involves the diagrams in Figure~\ref{kreidIIproof}. These diagrams comprise the ones involved in  the cases (a) and (b) illustrated in Figure~\ref{reidemIIproof} and  the right-hand diagram of Figure~\ref{reidemII}. Then, by the same arguments as in Proposition~\ref{reidem} we obtain invariance of $D[T]$ under the Reidemeister~II moves.

\smallbreak
\begin{figure}[H]
\begin{center}
\includegraphics[width=7cm]{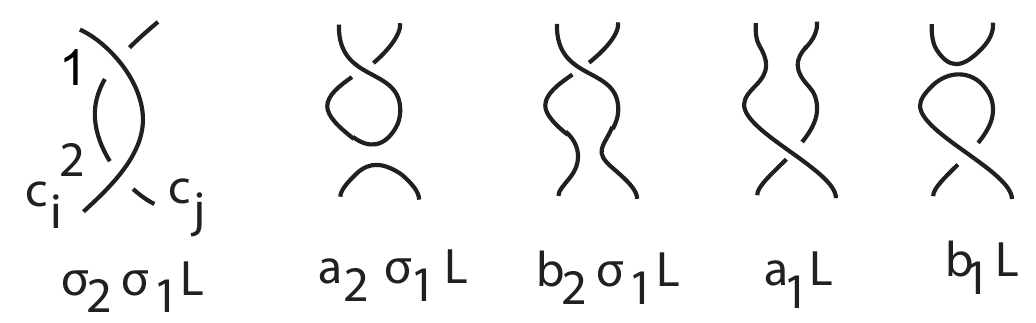}
\caption{The diagrams in the analysis of Reidemeister II moves for  $D[T]$}
\label{kreidIIproof}
\end{center}
\end{figure}

For proving  invariance of $D[T]$ under Reidemeister~III moves we also follow the same strategy as for $H[R]$ in Proposition~\ref{reidem}. In the cases where one (resp. two) of the crossings involved in the move need to be switched, all four configurations of Figure~\ref{reidemIIIpf1} (resp. Figure~\ref{reidemIIIpf2}) will enter the picture and the same arguments apply as for $H[R]$, see Figure~\ref{kreidIIIproof1}.

\smallbreak
\begin{figure}[H]
\begin{center}
\includegraphics[width=11cm]{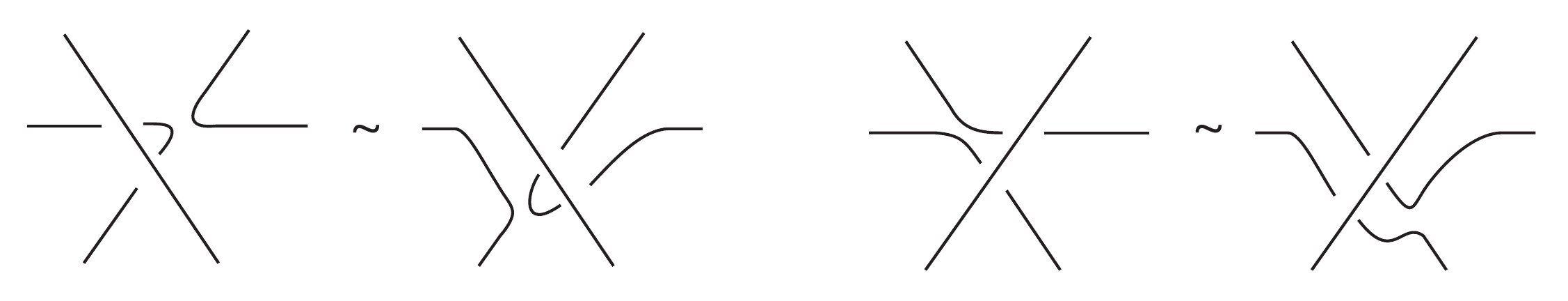}
\caption{Analysis of the Reidemeister III moves for  $D[T]$}
\label{kreidIIIproof1}
\end{center}
\end{figure}

We will finally prove independence of $D[T]$ under ordering of components. We use the same notations as in the proof of Proposition~\ref{ordercpts}, but with different interpretations due to (\ref{dmixedskein}).  Then, with the conventions above and  denoting: 
$$
L_{0,i} := a_i L_{\varepsilon_i} - b_i L_{\varepsilon_i} \qquad \mbox{and} \qquad 
L^\prime_{0,j} := a_j L^\prime_{\varepsilon_j} - b_j L^\prime_{\varepsilon_j},
$$
for $i = 1, \ldots, r$ and  $j = r+1, \ldots, r+ r^\prime$,  we have the same formal expressions as in  Proposition~\ref{ordercpts}, where further the invariant $R$ is replaced by $T$.
 With the above analysis the proof of Theorem~\ref{doft} is now concluded. \hfill \qed

\subsubsection{Translation to Ambient Isotopy} \label{generalamby}

In this subection we define  the ambient isotopy generalized invariant, counterpart of the regular isotopy generalized invariant $D[T]$ constructed above.
 Let $Y$ denote the classical ambient isotopy Dubrovnik polynomial. Then, one can obtain the ambient isotopy invariant $Y$ from its regular isotopy counterpart $D$ via  the formula:  
$$
Y (L) := a^{-wr (L)} D(L),
$$ 
where $wr (L)$ is the total writhe of the diagram $L$ for some choice of orientation of $L$. 
Analogously, and letting $Z$ denote $Y$ but with different variable, from our generalized regular isotopy invariant $D[T]$ one can derive an ambient isotopy invariant $Y[Z]$ via:
\begin{equation}\label{yzfromdt}
Y[Z] (L) :=  a^{-wr (L)} D[T] (L).
\end{equation}

In order to have a skein relation one leaves it in regular isotopy form.

\subsubsection{Skein theory for the polynomial $K[Q]$ - Proof of Theorem~\ref{kofq}} \label{generalregk}

We shall now establish the generalization of the Kauffman polynomial by means of proving  Theorem~\ref{kofq}.  
 In the proofs that follow, the evaluation $K[Q](L) \in \mathcal{Z}$ on a generic link diagram $L \in \mathcal{L}^u$ will be shortened to $(L)$. Moreover, let  $\varepsilon$ denote the type of a mixed crossing in $L$. Then rule~(1) of Theorem~\ref{kofq} can be re-written as: 
\begin{equation}\label{kmixedskein}
(L_{\varepsilon}) = - (L_{-\varepsilon})  + z \, \big[(L_0) + (L_{\infty})\big].
\end{equation}

The symmetry of (\ref{kmixedskein}) implies that we may suppress the indication $\varepsilon$ in the computations. For this reason and also due to the difference in signs from $D[T]$ we will record carefully some computations in the proofs, since they do not carry through directly from $H[R]$ and $D[T]$.

Adapting Proposition~\ref{orderxings} to $K[Q]$, for a given mixed crossing $i$ of $L$ we denote by $\sigma_i L$ the same diagram but with that crossing switched and by 
$$
s_i L := a_i L + b_i L,
$$ 
the formal sum of the  diagrams $a_i L$ and $b_i L$, which are the same as $L$  but with the $A$- and $B$-smoothings respectively replacing the crossing $i$. In this notation we have the polynomial equation $(s_i L) = (a_i L) + (b_i L)$. 
Then, relation~(\ref{ij}) in the proof of Proposition~\ref{orderxings} is replaced by the relation:
\begin{equation}\label{kij}
\begin{array}{rcl}
(L) & = & -(\sigma_i L) + z (s_i L) 
  =   (\sigma_j \sigma_i L) -  z (s_j \sigma_i L) + z  (s_i L)
	\end{array}
\end{equation}
and relation~(\ref{ji}) is replaced by the relation:
\begin{equation}\label{kji}
\begin{array}{rcl}
 (L^\prime) & := & - (\sigma_j L) + z (s_j L) 
 =  (\sigma_i \sigma_j L) - z (s_i \sigma_j L) + z (s_j L). 
\end{array}
\end{equation}
Applying now relation~(\ref{kmixedskein}) to the link diagrams $a_i L, b_i L, a_j L, b_j L$, namely: 

\smallbreak

\begin{center}
$ (a_i L) = -(\sigma_j a_i L) + z (s_j a_i L) $
\end{center}
\begin{center}
$ (b_i L) = -(\sigma_j b_i L) + z (s_j b_i L) $
\end{center}
\begin{center}
$ (a_j L) = -(\sigma_i a_j L) + z (s_i a_j L) $
\end{center}
\begin{center}
$ (b_j L) = -(\sigma_i b_j L) + z (s_i b_j L) $
\end{center}

\smallbreak

\noindent and replacing in (\ref{kij}) and (\ref{kji}) we obtain equality of $(L)$ and $(L^\prime)$,  using the same arguments as in  Proposition~\ref{orderxings}. Corollary~\ref{basepoints} and Proposition~\ref{skeinrule} carry through directly for  $K[Q]$. Concerning now invariance under the Reidemeister moves, we adapt the proof of Proposition~\ref{reidem}. We first check the Reidemeister~II moves. As for $D[T]$, the two mixed crossings (to be switched for the case $i<j$)  at the left-hand instance of Figure~\ref{reidemII} are of opposite type. Proceeding the analysis we obtain the final equation:
\begin{equation}\label{kreidemII}
(L) = (\sigma_2 \sigma_1 L) - z \, \big[ (a_2 \sigma_1 L) + (b_2 \sigma_1 L) \big] + z \, \big[ (a_1 L ) + (b_1 L ) \big],
\end{equation}
which involves the diagrams in Figure~\ref{kreidIIproof}. Then, by the same arguments as in the proof of Proposition~\ref{reidem} we show invariance of $K[Q]$ under the Reidemeister~II moves.

For proving  invariance of $K[Q]$ under Reidemeister~III moves in the cases where one (resp. two) of the crossings involved in the move needs to be switched, all four configurations of Figure~\ref{kreidIIIproof1} will enter the picture and the same arguments apply as for $H[R]$.

We will finally adapt Proposition~\ref{ordercpts} for $K[Q]$. We use the same notations as in the proof of Proposition~\ref{ordercpts}, but with different interpretations due to (\ref{kmixedskein}).  Then, with the conventions above and by denoting: 
$$
L_{0,i} := a_i L_{\varepsilon_i} + b_i L_{\varepsilon_i} \qquad \mbox{and} \qquad 
L^\prime_{0,j} := a_j L^\prime_{\varepsilon_j} + b_j L^\prime_{\varepsilon_j},
$$
for $i = 1, \ldots, r$ and  $j = r+1, \ldots, r+ r^\prime$,  we have :
\begin{equation} \label{k1tor}
\begin{array}{lclcl}
(AB) & := & (L_{\varepsilon_1})  & = & -(L_{-\varepsilon_1}) + z \, (L_{0,1}), \\
 (L_{-\varepsilon_1}) & := & (L_{\varepsilon_2}) & = & -(L_{-\varepsilon_2}) + z \, (L_{0,2}), \\
 & \vdots & & & \\
(L_{-\varepsilon_{r-1}}) & := & (L_{\varepsilon_{r}}) & = & -(L_{-\varepsilon_{r}}) + z \, (L_{0,r})  =  -(\frac{A}{B}) + z \, (L_{0,r}).
\end{array}
\end{equation}
At the same time and selecting in  $BA$ the crossing $r+1$ we have :
\begin{equation} \label{krplus1torprime}
\begin{array}{lclcl}
(BA) & := & (L^\prime_{\varepsilon_{r+1}})  & = & -(L^\prime_{-\varepsilon_{r+1}}) + z \, (L^\prime_{0, r+1}), \\
 (L^\prime_{-\varepsilon_{r+1}}) & := & (L^\prime_{\varepsilon_{r+2}})  & = & - (L^\prime_{-\varepsilon_{r+2}}) + z \, (L^\prime_{0, r+2}), \\
 & \vdots & & & \\
(L^\prime_{-\varepsilon_{r+ r^\prime - 1}}) & := & (L^\prime_{\varepsilon_{r+ r^\prime}})  & = & -(L^\prime_{-\varepsilon_{r+ r^\prime}})  + z \, (L^\prime_{0, r+ r^\prime})  =  - (\frac{B}{A})  + z \, (L^\prime_{0, r+ r^\prime}).
\end{array}
\end{equation}
Substituting now the expressions in (\ref{k1tor}) consecutively, starting from the last equation, we obtain:
  \begin{equation} \label{kABtoAoverB}
(AB) = -\left(\frac{A}{B} \right) + \, z  \, \left[(L_{0,1}) + \cdots + (L_{0,r}) \right].
\end{equation}
Analogously, from (\ref{krplus1torprime})  we obtain:
  \begin{equation} \label{kBAtoBoverA}
(BA) = -\left(\frac{B}{A} \right) + \, z  \, \left[(L^\prime_{0,r+1}) + \cdots + (L^\prime_{0,r+ r^\prime}) \right].
\end{equation}
 Denoting now: 
\begin{equation} \label{kXandY}
(X) := (L_{0,1})  +  \cdots +  (L_{0,r}) \quad {\rm and } \quad (Y) := (L^\prime_{0,r+1}) + \cdots + (L^\prime_{0,r+ r^\prime}),
\end{equation}	 
Eqs.~\ref{kABtoAoverB} and~\ref{kBAtoBoverA} are shortened to the following:
  \begin{equation} \label{kABXY}
(AB) = -\left(\frac{A}{B} \right) + \, z  \, (X) \quad {\rm and } \quad 
(BA) = -\left(\frac{B}{A} \right) + \, z  \, (Y).
\end{equation}
Then, subtracting by parts we obtain:
\begin{equation} \label{kABvsBAXY}
 (AB) - (BA) = \left(\frac{B}{A} \right)  -  \left(\frac{A}{B} \right) + z \, [(X) - (Y)].
\end{equation}
Further, we observe that  the descending stacks  $\frac{A}{B}$ and $\frac{B}{A}$ are both assigned the same value of $K[Q]$ by the basis of induction. Thus,  we have: 
\begin{equation} \label{reductiontoq}
\left(\frac{A}{B} \right) = E^{1-c} \, Q\left(\frac{A}{B} \right) \quad \& \quad \left(\frac{B}{A} \right) = E^{1-c} \, Q\left(\frac{B}{A} \right),
\end{equation}
where $c$ is the number of components in both descending stacks. But, by the well-definedness of the link invariant $Q$ it is ensured that 
$Q(\frac{A}{B}) = Q(\frac{B}{A})$. So $ (\frac{B}{A}) = (\frac{A}{B})$. So, Eq.~\ref{kABvsBAXY}  becomes:
\begin{equation} \label{kABBAonlyXY}
 (AB) - (BA) =  z \, [(X) - (Y)].
\end{equation}

In order to prove further that $(X) = (Y)$ we apply the same procedure as above, switching and smoothing progressively all  $r$ crossings starting from $AB$ and all $r^\prime$ crossings starting from $BA$, but this time working with the invariant $Q$.  We obtain equations of the same form as (\ref{k1tor}) and (\ref{krplus1torprime}), but now $z$ is replaced by $w$ and the invariant $R$ is evaluated on all diagrams. Summing up we obtain: 
\begin{equation} \label{kevalR}
 Q(AB) - Q(BA) =  w \, [Q(X) - Q(Y)],
\end{equation}
where $Q(X) := Q(L_{0,1})  +  \cdots + Q(L_{0,r})$  and  $Q(Y) :=  Q(L^\prime_{0,r+1}) + \cdots + Q(L^\prime_{0,r+ r^\prime})$. Of course, by the well-definedness of $Q$ we have $Q(AB) = Q(BA)$.
 Now,  all intermediate generic diagrams in (\ref{k1tor}) and (\ref{krplus1torprime}) that come from smoothings  are descending stacks of $c-1$ components,  since the components $A$ and $B$ have merged into one. So, by the basis of induction:
\begin{equation} \label{kRonLi}
(L_{0,i}) = E^{2-c} \, Q(L_{0,i}),
\end{equation}
for all $i=1,\ldots, r+ r^\prime$. Multiplying then Eq.~\ref{kevalR} by $E^{2-c}$ we obtain:
\begin{equation} \label{kequalXY}
(X) - (Y) = 0.
\end{equation}
Substituting, finally, (\ref{kequalXY}) in (\ref{kABBAonlyXY})   we  obtain: 
\begin{equation} \label{kBAequalAB}
(AB) =  (BA).
\end{equation}
Hence, the proof of the Proposition is concluded and with this the proof of Theorem~\ref{kofq} is also is concluded. \hfill \qed

\subsubsection{Translation to Ambient Isotopy} \label{generalambf}

As for the Dubrovnik polynomial, in this subection we define for the Kauffman polynomial the ambient isotopy generalized invariant, counterpart of the regular isotopy generalized invariant $K[Q]$ constructed above.
 Let $K$ denote the classical regular isotopy Kauffman polynomial. Then, one can obtain the ambient isotopy invariant $F$ from its regular isotopy counterpart $K$ via  the formula:  
$$
F (L) := a^{-wr (L)} K(L),
$$ 
where $wr (L)$ is the total writhe of the diagram $L$ for some choice of orientation of $L$. 
Analogously, and letting $S$ denote $F$ but with different variable, from our generalized regular isotopy invariant $K[Q]$ one can derive an ambient isotopy invariant $F[S]$ via:
\begin{equation}\label{fsfromkq}
F[S] (L) :=  a^{-wr (L)} K[Q] (L).
\end{equation}

In order to have a skein relation one leaves it in regular isotopy form.

\section{Closed combinatorial formulae for the generalized invariants } \label{closedforms}

In this section we give closed combinatorial  formulae for our regular isotopy extension $H[R]$ and  for the  invariants $D[T]$ and $K[Q]$, generalizing the Dubrovnik and the Kauffman polynomials. 
As we mentioned in the previous section, in \cite[Appendix B]{chjukala} W.B.R. Lickorish provides an analogous closed combinatorial formula for the definition of the invariant $\Theta = P[P]$, that uses the  Homflypt polynomials and linking numbers of sublinks of a given link.

\subsection{A closed  formula for $H[R]$} \label{secphi}

We recall the basic rules for $H[R]$ from Theorem~\ref{hofr} and rules (R1)--(R5) for $R$ that are listed thereafter. We then have the following.

\begin{thm}\label{theta_linking_P}
Let $L$ be an oriented link with $n$ components and let
\begin{equation}\label{hr}
\Phi[R](L) = \left( \frac{z}{w} \right)^{n-1} \sum_{k=1}^n \eta^{k-1} \widehat{E}_k \sum_\pi R(\pi L)
\end{equation}
where the second summation is over all partitions $\pi$ of the components of $L$ into $k$ (unordered) subsets and $R(\pi L)$ denotes the product of the Homflypt polynomials of the $k$ sublinks of $L$ defined by $\pi$. Furthermore, $\widehat{E}_k = (\widehat{E}^{-1} - 1)(\widehat{E}^{-1} - 2) \cdots (\widehat{E}^{-1} - k + 1)$, with $\widehat{E} = E \frac{z}{w}$, $\widehat{E}_1 =1$, and $\eta = \frac{a - a^{-1}}{w}$. If $H[R]$ satisfies  properties (1) and (2) of Theorem~\ref{hofr} then $H[R](L) = \Phi[R](L)$.  
  Independently of that, $\Phi[R]$ is a well-defined regular isotopy invariant of oriented links and $\Phi[R]$ satisfies the relations (1) and (2) of Theorem~\ref{hofr}. Therefore $\Phi[R]$ is a closed formula for $H[R]$. 
\end{thm}

\begin{proof}
We will first show that if $H[R]$ satisfies  properties (1) and (2) of Theorem~\ref{hofr} then $H[R]$ can be given by the formula (\ref{hr}). Before proving this, note the following equalities:
\begin{align*}
R(L_1 \sqcup L_2) &= \eta \, R(L_1) \, R(L_2),
\\
H[R](L_1 \sqcup L_2) &= \frac{\eta}{E} \, H[R](L_1) \, H[R](L_2).
\end{align*}
In the case where both $L_1$ and $L_2$  are knots the above formulae follow directly from rules (R5) and (2) above. If at least one of $L_1$ and $L_2$ is a true link, then the formulae follow by doing independent skein processes on  $L_1$ and $L_2$ for bringing them down to unlinked components, and then using the defining rules above.

Suppose now that a diagram of $L$ is given. The proof is by induction on $n$ and on the number, $u$, of crossing changes between distinct components required to change $L$ to $n$ unlinked knots. If $n=1$ there is nothing to prove. So assume the result true for $n-1$ components and $u-1$ crossing changes and prove it true for $n$ and $u$.

The induction starts when $u = 0$. Then $L$ is the union of $n$ unlinked components $L_1, \dots, L_n$. A classic elementary result concerning the Homflypt polynomial shows that $R(L) = \eta^{n-1}R(L_1) \cdots R(L_n)$. Furthermore, in this situation, for any $k$ and $\pi$, $R(\pi L) = \eta^{n-k}R(L_1) \cdots R(L_n)$. Note that $H[R](L) = E^{1-n} R(L) = \eta^{n-1} E^{1-n} R(L_1) \cdots R(L_n)$. So it is required to prove that
\begin{equation}\label{hunlink}
\eta^{n-1} E^{1-n} = \left( \frac{z}{w} \right)^{n-1} \eta^{n-1} \sum_{k=1}^n S(n,k)(\widehat{E}^{-1} - 1)(\widehat{E}^{-1} - 2) \cdots (\widehat{E}^{-1} - k + 1),
\end{equation}
where $S(n,k)$ is the number of partitions of a set of $n$ elements into $k$ subsets. Now it remains to prove that:
\begin{equation}\label{hstirling}
E^{1-n} \left( \frac{z}{w} \right)^{1-n} = \widehat{E}^{1-n} = \sum_{k=1}^n S(n,k)(\widehat{E}^{-1} - 1)(\widehat{E}^{-1} - 2) \cdots (\widehat{E}^{-1} - k + 1).
\end{equation}
Now, from the theory of combinatorics, $S(n,k)$ is a Stirling number of the second kind and this required formula is a well-known result about such numbers.

Now suppose that $u > 0$. Suppose that in a sequence of $u$ crossing changes that changes $L$, as above, into unlinked knots, the first change is to a crossing $c$ of sign $\epsilon$ between components $L_1$ and $L_2$. Let $L^\prime$ be $L$ with the crossing changed and $L^0$ be $L$ with the crossing annulled. Now, from the definition of $H[R]$,
$$
H[R] (L) =  H[R] (L^\prime) + \epsilon z \, H[R] (L^0).
$$

\noindent The induction hypotheses imply that the result is already proved for $L^\prime$ and $L^0$ so
\begin{equation}\label{hstar}
H[R] (L) = \left( \frac{z}{w} \right)^{n-1} \sum_{k=1}^n \eta^{k-1}\widehat{E}_k \sum_{\pi^\prime} R(\pi^\prime L^\prime) + \epsilon z \left( \frac{z}{w} \right)^{n-2} \sum_{k=1}^{n-1} \eta^{k-1}\widehat{E}_k \sum_{\pi^0} R(\pi^0 L^0),
\end{equation}
where $\pi^\prime$ runs through the partitions of the components of $L^\prime$ and $\pi^0$ through those of $L^0$.

A sublink $X^0$ of $L^0$ can be regarded as a sublink $X$ of $L$ containing $L_1$ and $L_2$ but with $L_1$ and $L_2$ fused together by annulling the crossing at $c$. Let $X^\prime$ be the sublink of $L^\prime$ obtained from $X$ by changing the crossing at $c$. Then
$$
R (X) = R (X^\prime) + \epsilon w \, R(X^0).
$$
This means that the second (big) term in (\ref{hstar}) is
\begin{equation}\label{hbig_term}
\frac{z} {w} \left( \frac{z}{w} \right)^{n-2} \sum_{k=1}^{n-1} \eta^{k-1} \widehat{E}_k \sum_{\rho} \bigl( R(\rho L) - R(\rho^\prime L^\prime  ) \bigr),
\end{equation}
where the summation is over all partitions $\rho$ of the components of $L$ for which $L_1$ and $L_2$ are in the same subset and $\rho^\prime$ is the corresponding partition of the components of $L^\prime$.

Note that, for any partition $\pi$ of the components of $L$ inducing partition $\pi^\prime$ of $L^\prime$, if $L_1$ and $L_2$ are in the same subset then 
we can have a difference between $R(\pi L)$ and $R(\pi^\prime L^\prime)$, but when $L_1$ and $L_2$ are in different subsets then
 \begin{equation}\label{hdiff_subsets}
R(\pi^\prime L^\prime) = R(\pi L). 
\end{equation}
Thus, substituting (\ref{hbig_term}) in (\ref{hstar}) we obtain:
\begin{equation}\label{halmostdone}
H[R](L) = \left( \frac{z}{w} \right)^{n-1} \sum_{k=1}^n \eta^{k-1} \widehat{E}_k \biggl( \sum_{\pi^\prime} R(\pi^\prime L^\prime) + \sum_{\rho} \bigl( R(\rho L) - R(\rho^\prime L^\prime)\bigr) \biggr),
\end{equation}
where $\pi^\prime$ runs through all partitions of $L^\prime$ and $\rho$ through partitions of $L$ for which $L_1$ and $L_2$ are in the same subset. Note that, for $k=n$ the second sum is zero. Therefore:
\begin{equation}\label{hdone}
H[R] (L) = \left( \frac{z}{w} \right)^{n-1} \sum_{k=1}^n \eta^{k-1} \widehat{E}_k \biggl( \sum_{\pi^\prime} R(\pi^\prime L^\prime) + \sum_{\rho}  R(\rho L) \biggr),
\end{equation}
where  $\pi^\prime$  runs through only partitions of $L^\prime$ for which $L_1$ and $L_2$ are in different subsets and $\rho$ through all partitions of $L$ for which $L_1$ and $L_2$ are in the same subset.  Hence, using (\ref{hdone}) and also (\ref{hdiff_subsets}), we obtain:
$$
H[R] (L) = \left( \frac{z}{w} \right)^{n-1} \sum_{k=1}^n \eta^{k-1} \widehat{E}_k \sum_\pi R(\pi L)
$$
and the induction is complete.

Now we observe that the formula for $\Phi[R](L)$ satisfies properties (1) and (2) of Theorem~\ref{hofr}. The proof of this is actually contained in the formalism of the proof that we have already made above. The reader will see that the argument from (\ref{hstar}) to the end of the proof can be read as a verification that the formula $\Phi[R](L)$ satisfies the skein relation (1)  of Theorem~\ref{hofr} and the argument at the beginning of the proof involving formulas (\ref{hunlink}) and (\ref{hstirling}) can be regarded as a verification of property (2)  of Theorem~\ref{hofr} for the invariant $\Phi[R]$. The proof is now complete.
\end{proof}

\begin{rem} \rm 
Note that the combinatorial formula~(\ref{hr}) can be regarded by itself as a definition of the invariant $H[R]$.  In the same way the original Lickorish formula (\ref{lickorish}) can be regarded as a definition for the invariant $\Theta = P[P]$. Clearly, the two formulae for $H[H]$ and $\Theta$ are intechangeable by writhe normalization, see (\ref{prfromhr}). However, from our regular isotopy approach it becomes clear that the linking numbers of sublinks, appearing in the formula~(\ref{lickorish}), do not play an intrinsic role in the theory.
\end{rem}

\begin{rem} \rm \label{fiandh}
In an early version of this paper the formula (\ref{hr}) was originally proved for the specialization $H[H]$ of $H[R]$, where $w=z$. Namely, 
\begin{equation}\label{phi}
H[H](L)(z,a,E) = \sum_{k=1}^n \phi^{k-1} E_k \sum_\pi H(\pi L),
\end{equation}
where $\phi = \frac{a - a^{-1}}{z}$ and $E_k = (E^{-1} - 1)(E^{-1} - 2) \cdots (E^{-1} - k + 1)$, with $E_1 =1$. This formula for the invariant $H[H]$ is fully detailed in \cite{kaula2}. Konstantinos Karvounis \cite{ka2} adapted our proof to the (seemingly more general, see Proposition~\ref{topequivh} below) invariant $H[R]$. Note that for $z=w$ \eqref{hr} reduces to \eqref{phi}. So, we present here the proof of Karvounis.
\end{rem}

Furthermore, Karvounis noticed that, using \eqref{hr} and \eqref{phi} we can  relate the invariants $H[R]$ and $H[H]$:

\begin{prop}[K. Karvounis \cite{ka2}] \label{topequivh}
The invariants $H[R]$ and $H[H]$ are topologically equivalent. Specifically, it holds that:
\begin{equation}
H[R](L)(z,w,a,E) = \left( \frac{z}{w} \right)^{n-1} H[H](L)(w,a,\widehat{E}).
\end{equation}
\end{prop}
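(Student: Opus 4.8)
The plan is to read the result directly off the two closed combinatorial formulae already at our disposal: formula~(\ref{hr}) for $H[R]$ and formula~(\ref{phi}) for $H[H]$. Both express their invariant as a single sum over partitions $\pi$ of the components of $L$, weighted by powers of an $\eta$-type quantity and by Stirling-type coefficients, and evaluated on each part by the regular isotopy Homflypt polynomial. Since each formula has already been proved to compute the respective invariant, it suffices to show that the right-hand side of (\ref{hr}) is obtained from the right-hand side of (\ref{phi}) under the stated substitution, up to the prefactor $(z/w)^{n-1}$.

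First I would make precise what $H[H](L)(w,a,\widehat{E})$ means: it is the result of substituting $z \mapsto w$ and $E \mapsto \widehat{E}$ into (\ref{phi}), with $a$ unchanged. Then I would track each ingredient of (\ref{phi}) under this substitution. There are three points to verify, all immediate from the definitions: (i) the underlying knot invariant $H = H(z,a)$ becomes $H(w,a)$, which by the very definition of $R$ in Theorem~\ref{hofr} is exactly $R = R(w,a)$, so each partition product $H(\pi L)$ becomes $R(\pi L)$; (ii) the coefficient $\phi = \frac{a - a^{-1}}{z}$ becomes $\frac{a - a^{-1}}{w} = \eta$; and (iii) the coefficient $E_k = (E^{-1} - 1)(E^{-1} - 2) \cdots (E^{-1} - k + 1)$ becomes, upon replacing $E$ by $\widehat{E}$, exactly $\widehat{E}_k = (\widehat{E}^{-1} - 1) \cdots (\widehat{E}^{-1} - k + 1)$, which is how $\widehat{E}_k$ was defined in the statement of (\ref{hr}).

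Combining these three observations gives
$$
H[H](L)(w,a,\widehat{E}) = \sum_{k=1}^{n} \eta^{k-1} \widehat{E}_k \sum_\pi R(\pi L),
$$
which is precisely the sum on the right-hand side of (\ref{hr}). Multiplying through by $(z/w)^{n-1}$ and invoking (\ref{hr}) then yields $H[R](L)(z,w,a,E) = (z/w)^{n-1}\, H[H](L)(w,a,\widehat{E})$, as claimed.

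Because both combinatorial formulae are already established, this argument is essentially bookkeeping of variable substitutions, and I expect no serious obstacle. The one step deserving care — and the only one a reader is likely to want spelled out — is point (i): one must observe that the "Homflypt polynomial in the variable $w$" appearing after substitution is literally the invariant $R$ used in (\ref{hr}), so that the products $H(\pi L)$ and $R(\pi L)$ agree term by term. This is guaranteed by the defining convention that $R(w,a)$ is $H(z,a)$ with $w$ in place of $z$. Everything else is a direct matching of coefficients, so no induction or fresh skein computation is needed here.
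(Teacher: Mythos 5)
Your argument is exactly the one the paper intends: Proposition~\ref{topequivh} is stated as an immediate consequence of comparing the combinatorial formulae (\ref{hr}) and (\ref{phi}), and your substitution bookkeeping ($z\mapsto w$, $E\mapsto\widehat{E}$, whence $\phi\mapsto\eta$, $E_k\mapsto\widehat{E}_k$, $H(\pi L)\mapsto R(\pi L)$) is precisely that comparison, carried out correctly. No gap; this matches the paper's (implicit) proof.
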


\begin{rem} \rm \label{depth}
Proposition~\ref{topequivh} implies that the four variables in the original setting of the invariant $H[R]$ can be reduced to three, by setting $z=w$, without any influence on the topological strength of the invariant. However, by taking $z$ and $w$ as  independent variables, we developed the theory in its full generality. We believe that this separation of variables clarifies the logic of the skein theoretic proofs of invariance. In particular, when we keep track of the powers of $z$ in the polynomial, we are looking at the depth of switching operations needed to unlink the link. It may be possible to use this information for further topological invariants of the link.
\end{rem}

\begin{rem} \rm \label{sublinks} 
The combinatorial formula~(\ref{hr}) shows that the strength of  $H[R]$  against $H$ comes from its ability to distinguish certain sublinks of Homflypt-equivalent links. In \cite{chjukala} a list of six 3-component links are given, which are Homflypt equivalent but are distinguished by the invariant $\Theta$ and thus also by $H[R]$.
\end{rem}

\begin{exmp}[L. Kauffman and D. Goundaroulis] \rm \label{ekt} Here is an example showing how $H[R]$ and the combinatorial formula give extra information in the case of two link components.
We will use the ambient isotopy version of the Jones polynomial $V_{K}(q)$ and so first work with a skein calculation of the Jones polynomial, and then with a calculation of the generalized invariant
$V[V](L)(q)$. Recall from Remark~\ref{specializations} that $V[V](L) = \theta(a, E)$ \cite{goula2}. We use the link $ThLink$ first found by Morwen Thisthlethwaite \cite{Th} and generalized by Eliahou, Kauffman and Thistlethwaite \cite{EKT}. This link of two components is not detectable by the Jones polynomial, but it is detectable by our extension of the Jones polynomial. 
 In doing this calculation we (Louis Kauffman and Dimos Goundaroulis) use Dror Bar Natan's Knot Theory package for Mathematica. In this package, the Jones polynomial 
is a function of $q$ and satisfies the skein relation 
$$
q^{-1} V_{K_+}(q) - q V_{K_-}(q) = (q^{1/2} - q^{-1/2}) V_{K_0}(q)
$$ 
where $K_+,K_-,K_0$ is the usual skein triple.
Let 
$$
a = q^2, z=(q^{1/2} - q^{-1/2}),  b = q z, c = q^{-1} z.
$$ 
Then we have the skein expansion formulas:
 $$
V_{K_+} = a V_{K_-} + b V_{K_0} \quad {\mbox \rm and} \quad 
V_{K_-} = a^{-1} V_{K_+} - c V_{K_0}.
$$
 In Figure~\ref{tlink} we show the Thistlethwaite link that is invisible to the Jones polynomial. In the same figure we show an unlink of two components obtained from the Thisthlethwaite link by switching four crossings.
 In Figure~\ref{k1234} we show the links $K_1,K_2,K_3,K_4$ that are intermediate to the skein process for calculating the invariants of $L$ by first switching only crossings between components.
 From this it follows that the knots and links in the figures indicated here satisfy the formula
 $$
V_{ThLink} = bV_{K_1} + abV_{K_2} -ca^2 V_{K_3} - ac V_{K_4} + V_{Unlinked}.
$$
 This can be easily verified by the specific values computed in Mathematica:

 $$
V_{ThLink} = -q^{-1/2} - q^{1/2}
$$
 $$
V_{K_1} = -1+\frac{1}{q^7}-\frac{2}{q^6}+\frac{3}{q^5}-\frac{4}{q^4}+\frac{4}{q^3}-\frac{4}{q^2}+\frac{3}{q}+q
$$
 $$
V_{K_2} =  1-\frac{1}{q^9}+\frac{3}{q^8}-\frac{4}{q^7}+\frac{5}{q^6}-\frac{6}{q^5}+\frac{5}{q^4}-\frac{4}{q^3}+\frac{3}{q^2}-\frac{1}{q}
$$
$$
V_{K_3} = 1-\frac{1}{q^9}+\frac{2}{q^8}-\frac{3}{q^7}+\frac{4}{q^6}-\frac{4}{q^5}+\frac{4}{q^4}-\frac{3}{q^3}+\frac{2}{q^2}-\frac{1}{q}
$$
$$
V_{K_4} = -1-\frac{1}{q^6}+\frac{2}{q^5}-\frac{2}{q^4}+\frac{3}{q^3}-\frac{3}{q^2}+\frac{2}{q}+q
$$
$$
V_{Unlinked}= \frac{1}{q^{13/2}}-\frac{1}{q^{11/2}}-\frac{1}{q^{7/2}}+\frac{1}{q^{3/2}}-\frac{1}{\sqrt{q}}-q^{3/2}
$$

\bigbreak

This is computational proof that the Thistlethwaite link is not detectable by the Jones polynomial.
If we compute $V[V](ThLink)(q)$ then we modify the computation to 
$$
V[V](ThLink)(q) = bV_{K_1} + abV_{K_2} -ca^2 V_{K_3} - ac V_{K_4} + E^{-1}V_{Unlinked}.
$$ 
 and it is quite clear that this is non-trivial when the new variable $E$ is not equal to $1$. 

On the other hand, the Lickorish formula for this case tells us that, for the regular isotopy version of the Jones polynomial $V'[V'](ThLink)(q)$,
$$
V'[V'](ThLink)(q) = \eta(E^{-1} -1)V'_{K_1}V'_{K_2}(q) + V'_{ThLink}(q)
$$
whenever we evaluate a 2-component link. Note that $\eta(E^{-1} -1)$ is non-zero whenever $E \ne 1$.
Thus it is quite clear that the Lickorish formula detects the Thisthlethwaite link since the Jones polyomials of the components of that link are non-trivial.
We have, in this example, given two ways to see how the extended invariant detects the link $ThLink$. The first way shows how the detection works in the extended skein theory.
The second way shows how it works using the Lickorish formula.
\end{exmp}

\begin{figure}
     \begin{center}
     \begin{tabular}{c}
     \includegraphics[width=6.5cm]{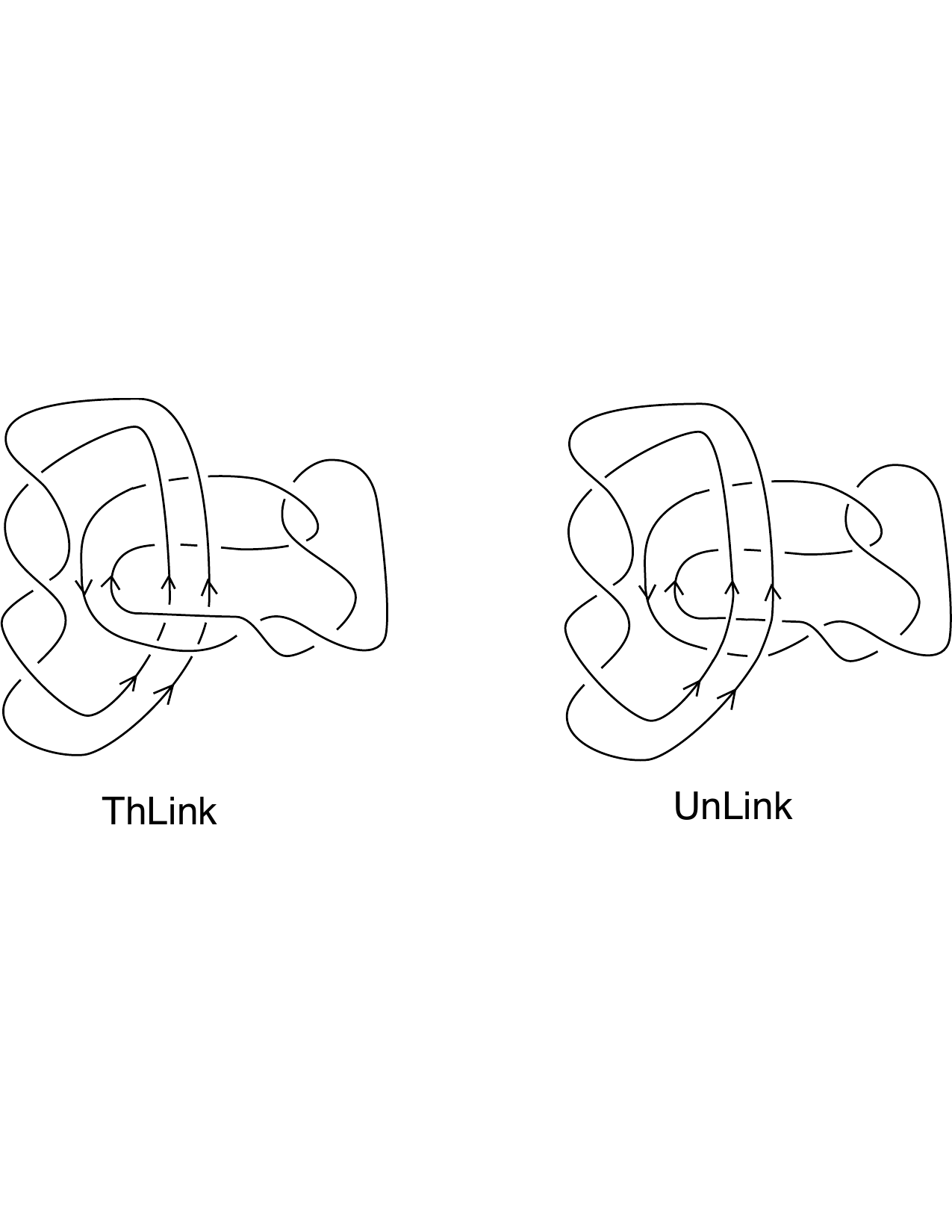}
     \end{tabular}
     \caption{The Thistlethwaite Link and Unlink}
     \label{tlink}
\end{center}
\end{figure}

\begin{figure}
     \begin{center}
     \begin{tabular}{c}
     \includegraphics[width=15cm]{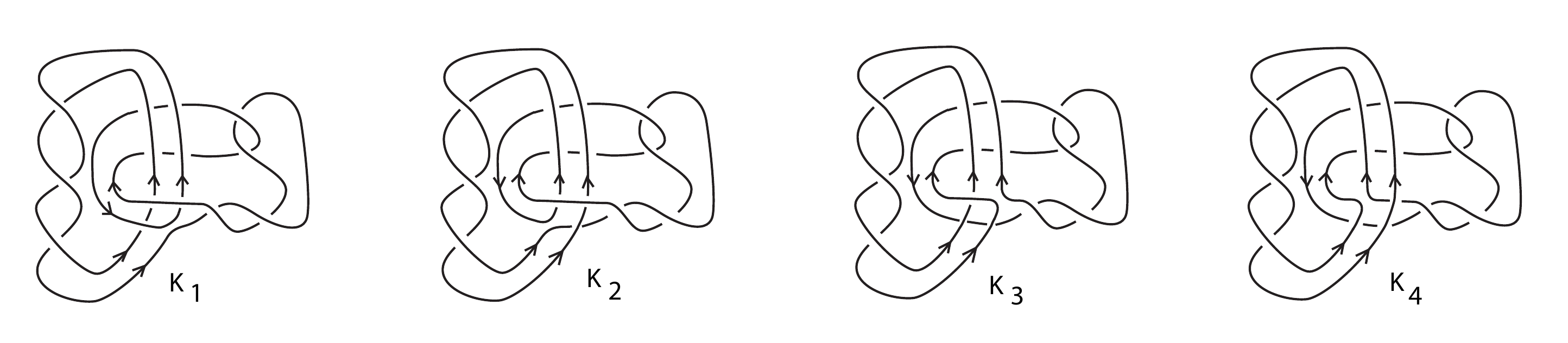}
     \end{tabular}
     \caption{ The links $K_1,K_2,K_3,K_4$}
     \label{k1234}
\end{center}
\end{figure}

\begin{exmp} \rm 
In this example, we point out how to see a generalization of the Jones polynomial (in Kauffman bracket form) as a specialization of our invariant $H[R]$. 
We begin with an expansion formula  for the bracket polynomial that is adapted to our situation. View Figure~\ref{orbracket}. At the top of the figure we show the standard oriented expansion of the bracket.
If the reader is familiar with the usual unoriented expansion \cite{kau5}, then this oriented expansion can be read by forgetting the orientations. The oriented states in this state summation contain smoothings of the type illustrated in the far right hand terms of the two formulas at the top of the figure. We call these {\it disoriented smoothings} since two arrowheads point to each other at these sites. Then by multiplying the two equations by $A$ and by $A^{-1}$ respectively, we obtain a difference formula of the type: 
$ A<K_{+}> - A^{-1} <K_{-}> = (A^{2} - A^{-2})<K_{0}>,$ 
where $K_{+}$ denotes the local appearance of a positive crossing, $K_{-}$ denotes the local appearance of a negative crossing and $K_{0}$ denotes the local appearance of standard oriented smoothing. The difference equation eliminates the disoriented terms. It then follows easily from this difference equation that if we define a {\it curly bracket} by the equation 
\begin{center} 
$ \{K\} = A^{wr(K)} <K> $ 
\end{center} 
where $wr(K)$ is the diagram writhe (the sum of the signs of the crossings of $K$), then we have a Homflypt type relation for $\{K\}$ as follows:
\begin{equation} \label{curlybracket}
\{K_{+}\} -  \{K_{-}\} = (A^{2} - A^{-2})\{K_{0}\}.
\end{equation}
This means that we can regard $\{K\}$ as a specialization of the Homflypt polynomial and so we can use it as the invariant $R$ for $H[R]$. 

\smallbreak
\begin{figure}[H]
\begin{center}
     \includegraphics[width=8cm]{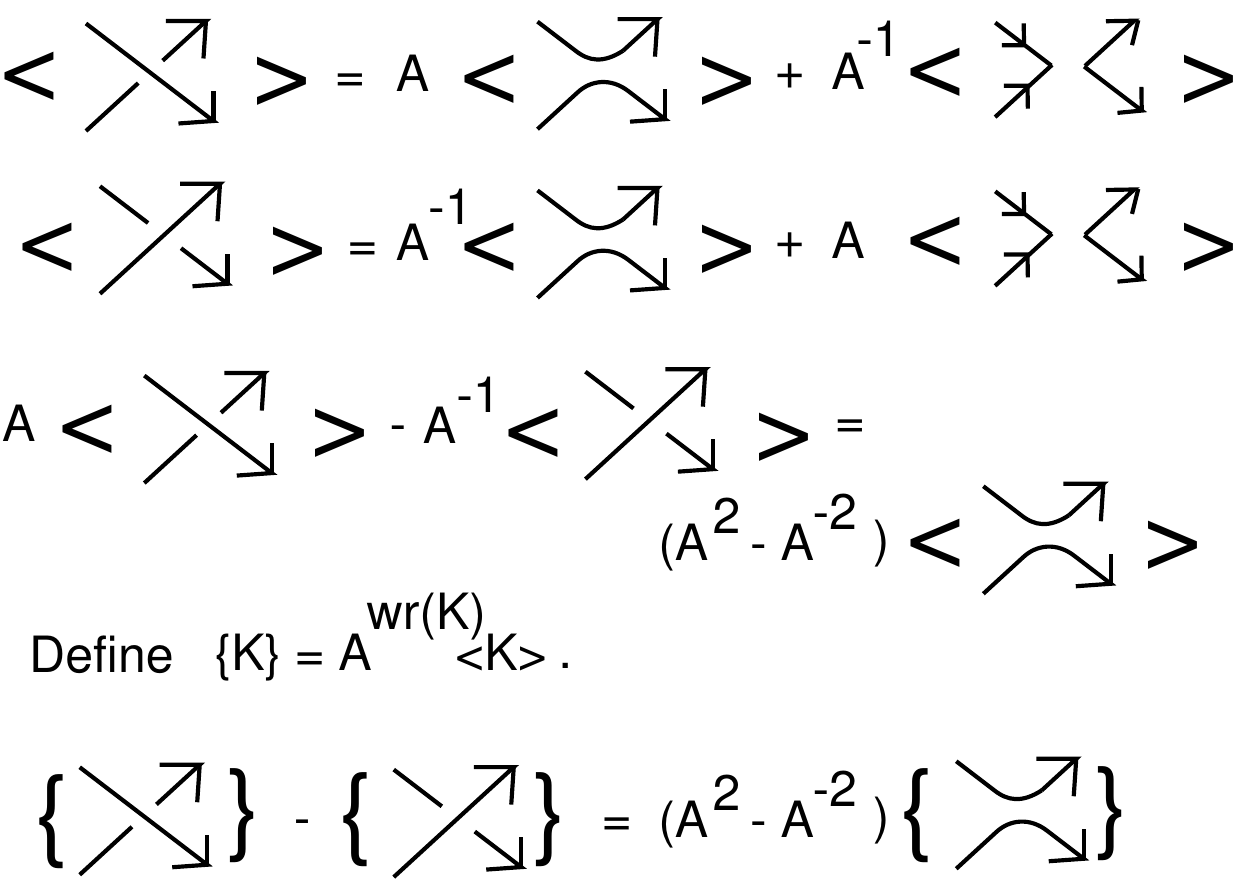}
     \caption{Oriented bracket with Homflypt skein relation}
     \label{orbracket}
\end{center}
\end{figure}

From Figure~\ref{orbracket} it is not difficult to see that \begin{equation} \label{curlyplus} \{K_{+}\} = A^2 \{K_{0}\} + \{K_{\infty}\} \end{equation} and \begin{equation} \label{curlyminus} \{K_{-}\} = A^{-2} \{K_{0}\} + \{K_{\infty}\}.
\end{equation}
Here $K_{\infty}$ denotes the disoriented smoothing shown in the figure. These formulas then define the skein expansion for the curly bracket. The reader should note that the difference of these two expansion equations (\ref{curlyplus}) and (\ref{curlyminus})  is the skein relation (\ref{curlybracket}) for the curly bracket in Homflypt form. 
\end{exmp}

\subsection{A closed combinatorial formula for $D[T]$}\label{secxi}

Recall the rules (T1)-(T5) for the Dubrovnik polynomial and for our extension of it $D[T]$ from Theorem~\ref{doft} and thereafter. We then have the following:

\begin{thm}\label{thmxi}
Let $L$ be an unoriented link with $n$ components and let
\begin{equation} \label{xi}
X[T](L) = (\frac{z}{w})^{n-1}\sum_{k=1}^n \delta^{k-1}\widehat{E_k} \sum_\pi T(\pi L)
\end{equation}
where the second summation is over all partitions $\pi$ of the components of $L$ into $k$ (unordered) subsets and $D(\pi L)$ denotes the product of the Dubrovnik polynomials of the $k$ sublinks of $L$ defined by $\pi.$
Furthermore, $\widehat{E}_k = (\widehat{E}^{-1} - 1)(\widehat{E}^{-1} - 2) \cdots (\widehat{E}^{-1} - k + 1)$, with $\widehat{E} = E \frac{z}{w}$, $\widehat{E}_1 =1$,  and $\delta = \frac{a - a^{-1}}{w} + 1$.  If $D[T]$ satisfies  properties (1) and (2) of Theorem~\ref{doft} then $D[T](L) = X[T](L)$. 
  Independently of that, $X[T]$ is a well-defined regular isotopy invariant of oriented links and $X[T]$ satisfies the relations (1) and (2) of Theorem~\ref{doft}.  Therefore $X[T]$ is a closed formula for $D[T]$.   
\end{thm}

\begin{proof}
We will first show that if $Q[T]$ satisfies  properties (1) and (2) of Theorem~\ref{doft} then $Q[T]$ can be given by the formula (\ref{xi}). Before proving this, note the following equalities:
\begin{align*}
T(L_1 \sqcup L_2) &= \delta \, T(L_1) T(L_2),
\\
D[T](L_1 \sqcup L_2) &= \frac{\delta}{E} \, D[T](L_1) D[T](L_2).
\end{align*}

Suppose that a diagram of $L$ is given. The proof is by induction on $n$ and on the number, $u$, of crossing changes between distinct components required to change $L$ to $n$ unlinked knots. If $n=1$ there is nothing to prove. So assume the result true for $n-1$ components and $u-1$ crossing changes and prove it true for $n$ and $u$.

The induction starts when $u = 0$. Then $L$ is the  union of $n$ unlinked components $L_1, L_2, \dots, L_n$. A classic elementary result concerning the Dubrovnik polynomial shows that 
$$
T(L) = \delta^{n-1}T(L_1)T(L_2)\cdots T(L_n).
$$ 
Furthermore, in this situation, for any $k$ and $\pi$, $T(\pi L) = \delta^{n-k}T(L_1)T(L_2)\cdots T(L_n)$. So it is required to prove that

\begin{equation}\label{dunlink}
\delta^{n-1} E^{1-n} = \left( \frac{z}{w} \right)^{n-1} \delta^{n-1} \sum_{k=1}^n S(n,k)(\widehat{E}^{-1} - 1)(\widehat{E}^{-1} - 2) \cdots (\widehat{E}^{-1} - k + 1),
\end{equation}
where $S(n,k)$ is the number of partitions of a set of $n$ elements into $k$ subsets. Now it remains to prove that:
\begin{equation}\label{dstirling}
E^{1-n} \left( \frac{z}{w} \right)^{1-n} = \widehat{E}^{1-n} = \sum_{k=1}^n S(n,k)(\widehat{E}^{-1} - 1)(\widehat{E}^{-1} - 2) \cdots (\widehat{E}^{-1} - k + 1).
\end{equation}
However, in the theory of combinatorics, $S(n,k)$ is known as a Stirling number of the second kind and this required formula is a well known result about such numbers.

Now suppose that $u > 0$. Suppose that in a sequence of $u$ crossing changes that changes $L$, as above, into unlinked knots, the first change is to a crossing $c$ between components $L_1$ and $L_2$ with relative sign $\epsilon$. Let $L^\prime$ be $L$ with the crossing changed and $L^0$ and $L^{\infty}$ be $L$ with the crossing annulled in the two possible ways. Now, from the definition of $D[T]$,
$$
D[T](L) =  D[T](L^\prime) + \epsilon z\big(D[T](L^0) - D[T](L^{\infty})\big)
$$

\noindent The induction hypotheses imply that the result is already proved for $L^\prime$, $L^0$ and $L^{\infty}$ so:
\begin{equation}\label{dstar}
D[T](L)=  \sum_{k=1}^n \delta^{k-1}\widehat{E_k} \sum_{\pi^\prime} T(\pi^\prime L^\prime) + \epsilon z (\sum_{k=1}^{n-1} \delta^{k-1}\widehat{E_k} \sum_{\pi^0} T(\pi^0 L^0) - \sum_{k=1}^{n-1} \delta^{k-1}\widehat{E_k} \sum_{\pi^{\infty}} T(\pi^{\infty} L^{\infty}))
\end{equation}
where $\pi^\prime$ runs through the partitions of the components of $L^\prime$, $\pi^0$ through those of $L^0$ and $\pi^{\infty}$ through those of $L^{\infty}$.

A sublink $X^0$ of $L^0$ can be regarded as a sublink $X$ of $L$ containing $L_1$ and $L_2$ but with $L_1$ and $L_2$ fused together by annulling the crossing at $c$. Similarly,  a sublink $X^{\infty}$ of $L^{\infty}$ can be regarded as a sublink $X$ of $L$ containing $L_1$ and $L_2$ but with $L_1$ and $L_2$ fused together by annulling the crossing at $c$.
Let $X^\prime$ be the sublink of $L^\prime$ obtained from $X$ by changing the crossing at $c$. Then
$$
T(X) =  T(X^\prime) +\epsilon w( T(X^0) - T(X^{\infty})).
$$
This means that the second (big) term in (\ref{dstar}) is
\begin{equation}\label{dbig_term}
\sum_{k=1}^{n-1} \delta^{k-1}\widehat{E_k} \sum_{\rho} \bigl( T(\rho L) - T(\rho^\prime L^\prime  ) \bigr),
\end{equation}
where the summation is over all partitions $\rho$ of the components of $L$ for which $L_1$ and $L_2$ are in the same subset and $\rho^\prime$ is the corresponding partition of the components of $L^\prime$.

\noindent Thus, substituting (\ref{dbig_term}) in (\ref{dstar}) we obtain:
\begin{equation}\label{dalmostdone}
D[T](L)=  \sum_{k=1}^n \delta^{k-1}\widehat{E_k} \biggl( \sum_{\pi^\prime}  T(\pi^\prime L^\prime) + \sum_{\rho} \bigl( T(\rho L) - T(\rho^\prime L^\prime)\bigr) \biggr),
\end{equation}
where $\pi^\prime$ runs through all partitions of $L^\prime$ and $\rho$ through partitions of $L$ for which $L_1$ and $L_2$ are in the same subset. Note that, for $k=n$ the second sum is zero. 

\noindent Therefore
\begin{equation}\label{ddone}
D[T] = \sum_{k=1}^n \delta^{k-1}\widehat{E_k} \biggl( \sum_{\pi^\prime} T(\pi^\prime L^\prime) + \sum_{\rho}  T(\rho L) \biggr),
\end{equation}
where  $\pi^\prime$  runs through only partitions of $L^\prime$ for which $L_1$ and $L_2$ are in different subsets and $\rho$ through all partitions of $L$ for which $L_1$ and $L_2$ are in the same subset. 

Note that, for any partition $\pi$ of the components of $L$ inducing partition $\pi^\prime$ of $L^\prime$, if $L_1$ and $L_2$ are in the same subset then 
we can have a difference between $D(\pi L)$ and $D(\pi^\prime L^\prime)$, but when $L_1$ and $L_2$ are in different subsets then
 \begin{equation}\label{ddiff_subsets}
T(\pi^\prime L^\prime) = T(\pi L). 
\end{equation}
 
\noindent Hence, using (\ref{ddone}) and also (\ref{ddiff_subsets}), we obtain:
$$
D[T] = \sum_{k=1}^n \delta^{k-1}\widehat{E_k} \sum_\pi T(\pi L)
$$
and the induction is complete.

Now we observe that the formula for $X[T](L)$ satisfies properties (1) and (2) of Theorem~\ref{doft}. The proof of this is actually contained in the formalism of the proof that we have already made above. The reader will see that the argument from (\ref{dstar}) to the end of the proof can be read as a verification that the formula $X[T](L)$ satisfies the skein relation (1)  of Theorem~\ref{doft} and the argument at the beginning of the proof involving formulas (\ref{dunlink}) and (\ref{dstirling}) can be regarded as a verification of property (2)  of Theorem~\ref{doft} for the invariant $X[T]$. The proof is now complete.
\end{proof}

\begin{rem} \rm 
Note that the formula (\ref{xi}) can be regarded by itself as a definition of the invariant $D[T]$, since the right-hand side of the formula is an invariant of regular isotopy, as $D$ is invariant of regular isotopy. 
\end{rem}

Furthermore,  Remark~\ref{sublinks} 
 applies also for the invariant $D[T]$. Finally, Remark~\ref{fiandh} applies here too, so, adapting  Proposition~\ref{topequivh}, we have in analogy:

\begin{prop}\label{topequivd}
The invariants $D[T]$ and $D[D]$ are topologically equivalent. Specifically, it holds that:
\begin{equation}
D[T](L)(z,w,a,E) = \left( \frac{z}{w} \right)^{n-1} D[D](L)(w,a,\widehat{E}).
\end{equation}
\end{prop}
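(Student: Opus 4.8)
The plan is to follow verbatim the strategy used for Proposition~\ref{topequivh}, using the closed combinatorial formula of Theorem~\ref{thmxi} as the sole input; beyond that formula no further skein-theoretic work will be needed, since both sides are already known to be regular isotopy invariants. First I would record the formula of Theorem~\ref{thmxi} for the four-variable invariant,
$$
D[T](L)(z,w,a,E) = \left( \frac{z}{w} \right)^{n-1} \sum_{k=1}^n \delta^{k-1}\widehat{E}_k \sum_\pi T(\pi L),
$$
where $\delta = \frac{a-a^{-1}}{w}+1$, $\widehat{E} = E\frac{z}{w}$, and $T$ is the Dubrovnik polynomial taken in the skein variable $w$.

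Next I would specialize Theorem~\ref{thmxi} to the three-variable invariant $D[D]$, which by definition is $D[T]$ with $w=z$. For a generic argument triple $(s,a,e)$ the resulting formula reads
$$
D[D](L)(s,a,e) = \sum_{k=1}^n \left( \frac{a-a^{-1}}{s}+1 \right)^{k-1} e_k \sum_\pi D_s(\pi L),
$$
where $e_k = (e^{-1}-1)(e^{-1}-2)\cdots(e^{-1}-k+1)$ and $D_s$ denotes the Dubrovnik polynomial evaluated at skein variable $s$; here the prefactor $(z/w)^{n-1}$ has disappeared and $T$ has collapsed to $D$ because $w=z$.

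The crux is then a single bookkeeping substitution: evaluate the last display at $(s,a,e)=(w,a,\widehat{E})$. Under this replacement the factor $\frac{a-a^{-1}}{s}+1$ becomes $\frac{a-a^{-1}}{w}+1=\delta$, the coefficient $e_k$ becomes $\widehat{E}_k$, and—most importantly—the Dubrovnik polynomial at skein variable $w$ is precisely $T$, so $D_w(\pi L)=T(\pi L)$. Hence
$$
D[D](L)(w,a,\widehat{E}) = \sum_{k=1}^n \delta^{k-1}\widehat{E}_k \sum_\pi T(\pi L),
$$
which is exactly the $k$-summation appearing in the first display. Multiplying through by $\left( \frac{z}{w} \right)^{n-1}$ recovers $D[T](L)(z,w,a,E)$, giving the claimed identity.

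The argument is pure algebra once Theorem~\ref{thmxi} is in hand, so there is no serious conceptual obstacle; the one point demanding care is tracking the skein variable consistently through every occurrence—both inside $\delta$ and inside each factor $D_w(\pi L)=T(\pi L)$—so that the identification of the Dubrovnik polynomial at parameter $w$ with $T$ is applied uniformly. This is precisely the verification already carried out for Proposition~\ref{topequivh}, and the same reasoning transfers verbatim with $\eta$ replaced by $\delta$ throughout. (Note in particular that the Stirling-number identity used in proving Theorem~\ref{thmxi} plays no role here, as the present statement is a direct comparison of two instances of the same combinatorial formula.)
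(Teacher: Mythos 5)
Your proposal is correct and is exactly the argument the paper intends: the paper gives no separate proof for Proposition~\ref{topequivd}, merely stating that it follows by adapting Proposition~\ref{topequivh}, and that adaptation is precisely your substitution of $(w,a,\widehat{E})$ into the $z=w$ specialization of the combinatorial formula of Theorem~\ref{thmxi}. Your bookkeeping of $\delta$, $\widehat{E}_k$, and the identification $D_w=T$ is accurate, so nothing further is needed.
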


Proposition~\ref{topequivd} implies that the four variables in the original setting of the invariant $D[T]$ can be reduced to three without any influence on the topological strength of the invariant. However, we opted for keeping $z$ and $w$ as  independent variables, in order to develop the theory in its full generality. Recall also Remark~\ref{depth}, which applies for $D[T]$  as well.

\subsection{A closed combinatorial formula for $K[Q]$}\label{secpsi}

Recall the basic skein formulas (Q1)--(Q5) for the regular isotopy 
invariant $Q$, used here  as a fully independent version of the Kauffman polynomial, and the rules for our generalization $K[Q]$ of Theorem~\ref{kofq}. Then we have the following:

\begin{thm} \label{thmpsi}
Let $L$ be an unoriented link with $n$ components and let 
\begin{equation} \label{psi}
\Psi[Q] (L) = i^{wr(L)}  (\frac{z}{w})^{n-1} \sum_{k=1}^n \gamma^{k-1}\widehat{E_k} \sum_\pi i^{-wr(\pi L)}Q(\pi L). 
\end{equation}
where the second summation is over all partitions $\pi$ of the components of $L$ into $k$ (unordered) subsets and $K(\pi L)$ denotes the product of the Kauffman polynomials of the $k$ sublinks of $L$ defined by $\pi$. 
The term $wr(\pi L)$ denotes the sum of the writhes of the parts of the partitioned link $\pi L$.
Furthermore, $\widehat{E_k} = (\widehat{E}^{-1} - 1)(\widehat{E}^{-1} - 2) \cdots (\widehat{E}^{-1} - k + 1)$, with $\widehat{E_1} =1$, $\widehat{E} = \frac{z}{w}E$ and 
$\gamma = \frac{a + a^{-1}}{w} -1$. Then $\Psi[Q]$ is a well-defined regular isotopy invariant of oriented links and $\Psi[Q]$ coincides with $K[Q]$.
\end{thm}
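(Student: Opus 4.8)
The plan is to follow the inductive scheme used for the Dubrovnik formula (Theorem~\ref{thmxi}), with $T$ replaced by $Q$ and $\delta$ by $\gamma$, after first disposing of the powers of $i$ by a parity observation. Writing $\nu(\pi)$ for the sum of the linking numbers of pairs of components of $L$ lying in distinct blocks of $\pi$ (as in the Lickorish formula (\ref{lickorish})), I would begin by showing that
\[
i^{wr(L)}\, i^{-wr(\pi L)} = (-1)^{\nu(\pi)}.
\]
Indeed $wr(L)-wr(\pi L)$ is the signed count of the crossings joining components in different blocks, and these contribute $2\,\mathrm{lk}(c,d)$ for each cross-block pair $\{c,d\}$, so the difference equals $2\nu(\pi)$ and $i^{2\nu(\pi)}=(-1)^{\nu(\pi)}$. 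Since $\mathrm{lk}$ modulo $2$ is independent of the orientation of each component, $(-1)^{\nu(\pi)}$ is orientation-independent, so the right-hand side of (\ref{psi}) is a genuine unoriented regular isotopy invariant (each $Q(\pi L)$ is, and writhe is preserved by Reidemeister~II and~III). Thus it suffices to prove the equivalent form
\[
K[Q](L)=\left(\tfrac{z}{w}\right)^{n-1}\sum_{k=1}^n \gamma^{k-1}\widehat{E}_k \sum_\pi (-1)^{\nu(\pi)}\, Q(\pi L),
\]
by double induction on $n$ and on the number $u$ of mixed crossing changes needed to split $L$ into $n$ unlinked knots.

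For the base cases I would argue exactly as in Theorem~\ref{thmxi}. When $n=1$ the only partition is trivial, $\nu(\pi)=0$, and the statement collapses to $K[Q](L)=Q(L)$. When $u=0$ the link is a union of $n$ unlinked knots, all linking numbers vanish so every sign $(-1)^{\nu(\pi)}$ equals $1$, and using $Q(L)=\gamma^{n-1}\prod_i Q(L_i)$ together with $Q(\pi L)=\gamma^{n-k}\prod_i Q(L_i)$ the claim reduces to the Stirling-number identity (\ref{dstirling}) already invoked for $D[T]$.

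For the inductive step ($u>0$) I would change the first mixed crossing $c$, between components $L_1$ and $L_2$, and apply the symmetric Kauffman relation (\ref{kmixedskein}) in the form
\[
K[Q](L) = -\,K[Q](L') + z\bigl(K[Q](L^0)+K[Q](L^\infty)\bigr),
\]
where $L'$ is $L$ with $c$ switched and $L^0,L^\infty$ are the two smoothings, each of which fuses $L_1$ and $L_2$ and so has $n-1$ components; the induction hypothesis applies to all three. For a partition $\pi$ separating $L_1$ and $L_2$ one has $Q(\pi'L')=Q(\pi L)$, since the sublinks do not see $c$, while switching $c$ reverses the parity of $\mathrm{lk}(L_1,L_2)$ and hence $(-1)^{\nu(\pi')}=-(-1)^{\nu(\pi)}$; combined with the leading minus sign this reproduces exactly the separating-partition terms of the target, the $k=n$ contribution again vanishing. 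For a partition $\rho$ keeping $L_1,L_2$ together I would feed the smoothing terms into the $Q$-skein relation $Q(X)=-Q(X')+w\bigl(Q(X^0)+Q(X^\infty)\bigr)$ on each sublink $X\supseteq L_1\cup L_2$, absorbing the extra factor against $z(\tfrac{z}{w})^{n-2}=w(\tfrac{z}{w})^{n-1}$ and using that $(-1)^{\nu}$ is unchanged when $L_1$ and $L_2$ are merged. Collecting terms yields the required formula, precisely mirroring (\ref{dbig_term})--(\ref{ddone}).

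The main obstacle is the sign bookkeeping rather than the combinatorics, which is a faithful transcription of Theorem~\ref{thmxi}. The delicate point is that the Kauffman relation carries a plus sign, whereas the Dubrovnik-type recursion driving the induction carries a minus sign; these are reconciled precisely by the factor $-1$ produced when $c$ is switched (either as $i^{-2\varepsilon}$ or, equivalently, as the parity flip of $\nu$). The most error-prone spot is the disoriented smoothing $L^\infty$, where the merged component cannot be coherently oriented and a naive writhe computation is ambiguous; this is exactly where one must use that $(-1)^{\nu(\pi)}$ depends only on linking numbers modulo $2$ and is therefore insensitive to that ambiguity. One could alternatively seek (\ref{psi}) from (\ref{xi}) through a Kauffman--Dubrovnik-type translation generalizing (\ref{ktod}) --- note that under $(a,w)\mapsto(ia,-iw)$ the Dubrovnik loop value $\delta$ becomes $-\gamma$, which is the same sign reconciliation --- though carrying this out reintroduces its own factors $(-1)^{c+1}$ on each sublink, so the direct parity reduction above seems cleanest.
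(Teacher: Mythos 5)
Your argument is correct, but it follows a genuinely different route from the paper's. The paper obtains (\ref{psi}) as a corollary of the Dubrovnik formula (\ref{xi}) already proved in Theorem~\ref{thmxi}: it invokes the Lickorish translation $K[Q](L)(a,z,w)=(-1)^{c(L)+1}\,i^{wr(L)}D[T](L)(-ia,iz,iw)$ together with $T(\pi L)(a,w)=(-1)^{c(\pi L)}\,i^{-wr(\pi L)}Q(\pi L)(ia,-iw)$, then uses $c(\pi L)=c(L)+k$ and $\delta(-ia,iw)=-\gamma(a,w)$ to cancel all the signs and land on (\ref{psi}); no new induction is performed. You instead rerun the induction of Theorem~\ref{thmxi} directly on the Kauffman side after the preliminary observation that $i^{wr(L)-wr(\pi L)}=i^{2\nu(\pi)}=(-1)^{\nu(\pi)}$, which is both correct and a useful clarification, since it shows the $i$-powers in (\ref{psi}) are really orientation-independent signs and puts the statement in a form exactly parallel to Lickorish's formula (\ref{lickorish}). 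Your inductive step checks out: for partitions separating $L_1$ and $L_2$ the leading minus sign in $K[Q](L)=-K[Q](L')+z\bigl(K[Q](L^0)+K[Q](L^\infty)\bigr)$ is absorbed by the parity flip of $\nu$ under the crossing switch, while for partitions keeping them together the relation $Q(X)=-Q(X')+w\bigl(Q(X^0)+Q(X^\infty)\bigr)$ cancels the $-Q(\rho'L')$ terms precisely as in (\ref{dbig_term})--(\ref{ddone}), and your appeal to linking numbers modulo $2$ correctly disposes of the orientation ambiguity at the disoriented smoothing $L^\infty$ (the only caveat is that your parenthetical ``the $k=n$ contribution again vanishing'' belongs to the non-separating sum, not the separating one). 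What the paper's route buys is economy --- no fresh sign bookkeeping once (\ref{xi}) and the translation formula are in hand; what your route buys is independence from the translation formula for the generalized invariants (which the paper only sketches by saying the ``same method of proof'' applies), plus a self-contained and more transparent explanation of where the factors $i^{\pm wr}$ come from.
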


\begin{proof}
In order to prove this Theorem, we first discuss a translation between the Kauffman and Dubrovnik polynomials. We then use this translation to deduce a combinatorial formula for $K[Q]$ from the combinatorial formula we have already proved for  the Dubrovnik polynomial extension $D[T]$. The following equation is the translation formula from the  Dubrovnik to Kauffman polynomial, observed by W.B.R. Lickorish \cite{kau4}:
$$
D(L)(a,z) = (-1)^{c(L)+1} \, i^{-wr(L)}K(L)(ia,-iz).
$$ 
Here, $c(L)$ denotes the number of components of $L$, $i^2 = -1$, and $wr(L)$ is the writhe of $L$ for some choice of orientation of $L$. The translation formula is independent of the particular choice of orientation for $L$. By the same token, we have the following formula translating the Kauffman polynomial to the Dubrovnik polynomial.
$$
K(L)(a,z) = (-1)^{c(L)+1} \,  i^{wr(L)} D(L)(-ia, iz).
$$
These formulas are proved by checking them on basic loop values and then using induction via the skein formulas for the two polynomials. This same method of proof shows that the same translation 
occurs between our generalizations of the Kauffman polynomial $K[Q]$  and the Dubrovnik polynomial $D[T]$.  
In particular, we have
\begin{equation} \label{psitoxi}
D[T](L)(a,z,w) = (-1)^{c(L)+1} \, i^{-wr(L)}K[Q](L)(ia,-iz,-iw)
\end{equation} 
and
\begin{equation} \label{xitopsi}
K[Q](L)(a,z,w) = (-1)^{c(L)+1} \, i^{wr(L)}D[T](L)(-ia,iz,iw).
\end{equation}

 We will show that $K[Q]$ of Theorem~\ref{kofq} can be given by the formula (\ref{psi}). We know that
$$
D[T](L) = (\frac{z}{w})^{n-1}\sum_{k=1}^n \delta^{k-1}\widehat{E_k} \sum_\pi T(\pi L),
$$
and 
$$
T(\pi L)(a,w) = (-1)^{c(\pi L)} \, i^{-wr(\pi L)}Q(\pi L)(ia,-iw).
$$ 
Here it is understood that $wr(\pi L)$ is the sum of the writhes of the parts of the partition of $L$ corresponding to $\pi$.
Note that $T(\pi L)(a,w)$ is a product of the Dubrovnik evaluations of the parts of the partition $\pi L$.
The term $c(\pi L)$ is equal to the sum 
$$
c(\pi L) = \sum_{\sigma} (c(\sigma) + 1) = \sum_{\sigma} c(\sigma)  + k = c(L) + k
$$  
where $\sigma$ runs over the parts of the partition $\pi L$.
Here $k$ is the number of parts in $\pi L$.
Thus
$$
D[T](L)(a,z,w) = (\frac{z}{w})^{n-1} \sum_{k=1}^n \delta(a,w)^{k-1}\widehat{E_k} \sum_\pi T(\pi L)(a,w)
$$
\begin{center}
$
=  (\frac{z}{w})^{n-1}  \sum_{k=1}^n \delta(a,w)^{k-1}\widehat{E_k} \sum_\pi (-1)^{c(\pi L)} \, i^{-wr(\pi L)}Q(\pi L)(ia,-iw).
$
\end{center}
We also know that 
\begin{center}
$
K[Q] (L)(a,z,w) = (-1)^{c(L)+1} \, i^{wr(L)}D[T](L)(-ia,iz,iw),
$
\end{center} 
and so we have
$$K[Q] (L)(a,z,w) = $$
$$(-1)^{c(L)+1} \, i^{wr(L)}  (\frac{z}{w})^{n-1} \sum_{k=1}^n \delta(-ia,iw)^{k-1}\widehat{E_k} \sum_\pi (-1)^{c(\pi L)} \, i^{-wr(\pi L)}Q(\pi L)(a,w).$$
Now we have that 
\begin{center}
$ 
\delta(-ia,iw) = \left((-ia) - (-ia)^{-1}\right)/(iw) + 1 = -\left( (a+a^{-1})/w - 1\right) = - \gamma(a,w).
$
\end{center}  
Therefore
$$K[Q] (L)(a,z,w) =$$
$$ (-1)^{c(L)+1} \, i^{wr(L)}  (\frac{z}{w})^{n-1} \sum_{k=1}^n (-1)^{k-1}\gamma(a,w)^{k-1}\widehat{E_k} \sum_\pi (-1)^{c(L) + k} \, i^{-wr(\pi L)}Q(\pi L)(a,w).$$
Thus
\begin{center}
$
K[Q] (L)(a,z,w) = i^{wr(L)}  (\frac{z}{w})^{n-1} \sum_{k=1}^n \gamma(a,w)^{k-1}\widehat{E_k} \sum_\pi i^{-wr(\pi L)}Q(\pi L)(a,w).
$
\end{center}
Hence
\begin{center}
$
K[Q] (L) = i^{wr(L)}  (\frac{z}{w})^{n-1} \sum_{k=1}^n \gamma^{k-1}\widehat{E_k} \sum_\pi i^{-wr(\pi L)}Q(\pi L).
$
\end{center} 
This completes the proof.
 \end{proof}

Note that the formula (\ref{psi}) can be regarded by itself as a definition of the invariant $K[Q]$, since the right-hand side of the formula is an invariant of regular isotopy, since $D$ is invariant of regular isotopy. Furthermore, Remark~\ref{sublinks} applies also for the invariant $K[Q]$. The same for Remark~\ref{fiandh}. So, adapting  Proposition~\ref{topequivh}, we have in analogy:

\begin{prop}\label{topequivk}
The invariants $K[Q]$ and $K[K]$ are topologically equivalent. Specifically, it holds that:
\begin{equation}
K[Q](L)(z,w,a,E) = \left( \frac{z}{w} \right)^{n-1} K[K](L)(w,a,\widehat{E}).
\end{equation}
\end{prop}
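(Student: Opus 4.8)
The plan is to deduce the identity directly from the closed combinatorial formula~(\ref{psi}) of Theorem~\ref{thmpsi}, in exactly the same way that Proposition~\ref{topequivh} is read off from~(\ref{hr}) and Proposition~\ref{topequivd} from~(\ref{xi}). Since $K[K]$ is by definition the specialization of $K[Q]$ at $w=z$, the first step is to record what~(\ref{psi}) becomes under $w=z$: there $\widehat{E}=\frac{z}{w}E$ collapses to $E$, the prefactor $\left(\frac{z}{w}\right)^{n-1}$ becomes $1$, the loop value $\gamma=\frac{a+a^{-1}}{w}-1$ becomes $\frac{a+a^{-1}}{z}-1$, and $Q$ (the Kauffman polynomial in the variable $w$) becomes $K$ (the same polynomial in the variable $z$). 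This yields the combinatorial formula for $K[K]$, namely
$$
K[K](L)(z,a,E) = i^{wr(L)} \sum_{k=1}^n \Bigl(\tfrac{a+a^{-1}}{z}-1\Bigr)^{k-1} E_k \sum_\pi i^{-wr(\pi L)} K(\pi L),
$$
with $E_k=(E^{-1}-1)(E^{-1}-2)\cdots(E^{-1}-k+1)$.

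The second step is to evaluate the right-hand side of this formula at the triple $(w,a,\widehat{E})$, i.e.\ to substitute $z\mapsto w$ and $E\mapsto\widehat{E}$. Under $E\mapsto\widehat{E}$ each $E_k$ becomes $\widehat{E}_k=(\widehat{E}^{-1}-1)\cdots(\widehat{E}^{-1}-k+1)$, matching the coefficients appearing in~(\ref{psi}). Under $z\mapsto w$ the loop value $\frac{a+a^{-1}}{z}-1$ becomes $\frac{a+a^{-1}}{w}-1=\gamma$, and each Kauffman factor $K(\pi L)$ in the variable $z$ becomes the Kauffman polynomial in the variable $w$, which is precisely $Q(\pi L)$. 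Hence
$$
K[K](L)(w,a,\widehat{E}) = i^{wr(L)} \sum_{k=1}^n \gamma^{k-1} \widehat{E}_k \sum_\pi i^{-wr(\pi L)} Q(\pi L).
$$
Comparing this with~(\ref{psi}) shows that $K[Q](L)(z,w,a,E)$ differs from $K[K](L)(w,a,\widehat{E})$ exactly by the factor $\left(\frac{z}{w}\right)^{n-1}$, which is the assertion of the Proposition.

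The step requiring the most care, and which distinguishes this case from Propositions~\ref{topequivh} and~\ref{topequivd}, is the handling of the writhe-normalization factors $i^{wr(L)}$ and $i^{-wr(\pi L)}$ that enter~(\ref{psi}) through the Kauffman--Dubrovnik translation formula~(\ref{psitoxi}). These prefactors depend only on the diagram $L$ and its partitions $\pi L$, not on the indeterminates $z,w,E$, so they are untouched by the substitution $(z,E)\mapsto(w,\widehat{E})$ and pass through the comparison unchanged; the entire $z$-dependence that produces the topological equivalence is carried by the prefactor $\left(\frac{z}{w}\right)^{n-1}$ together with the relation $\widehat{E}=\frac{z}{w}E$, while $\gamma$ already absorbs the $w$-dependence. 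I expect no genuine obstacle beyond this bookkeeping, since the identity is a formal consequence of the already-established formula~(\ref{psi}); indeed, as in Remark~\ref{fiandh}, one may alternatively note that formula~(\ref{psi}) is itself a regular isotopy invariant and that the claimed equality is the evaluation of the same universal expression at two related parameter triples.
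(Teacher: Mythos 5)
Your argument is correct and is precisely the route the paper intends: Proposition~\ref{topequivk} is stated as an immediate consequence of the closed combinatorial formula~(\ref{psi}), read off by comparing that formula with its own specialization at $w=z$, exactly as Proposition~\ref{topequivh} follows from~(\ref{hr}). Your additional observation that the writhe factors $i^{wr(L)}$ and $i^{-wr(\pi L)}$ are independent of $z,w,E$ and therefore pass through the substitution unchanged is the only point where this case differs from the Homflypt and Dubrovnik cases, and you handle it correctly.
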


Proposition~\ref{topequivk} implies that the four variables in the original setting of the invariant $K[Q]$ can be reduced to three without any influence on the topological strength of the invariant. Recall also Remark~\ref{depth}, which is valid for $K[Q]$ too.

\begin{rem} \rm
As noted in the Introduction, in Theorems~\ref{doft} and~\ref{kofq} the basic invariants $T(w,a)$ and $Q(w,a)$ could be replaced by specializations of the Dubrovnik and the Kauffman polynomial respectively and, then, the invariants $D[T]$ and $K[Q]$ can be regarded as generalizations of these specialized polynomials.  For example, if $a=1$ then $Q(w,1)$ is the Brandt--Lickorish--Millett--Ho polynomial and if $w= A+A^{-1}$ and $a= -A^3$ then $Q ( A+A^{-1}, -A^3)$ is the Kauffman bracket polynomial. In both cases the invariant $K[Q]$ generalizes these polynomials.
\end{rem}

\section{Conclusions}\label{conclusions}

In this paper we first gave a direct skein-theoretic proof of the existence and uniqueness of the generalization of the regular isotopy version of the Homflypt polynomial and its ambient isotopy counterpart, the new invariant $\Theta (q,\lambda,E)$ \cite{chjukala}. We then generalized to new skein link invariants the Dubrovnik and the Kauffman polynomials and we provided closed defining combinatorial formulae for all these new invariants.

\section{Discussing mathematical directions and applications}\label{directions}

We shall now discuss some possible research directions emanating from our results.

\begin{enumerate}
\item  Computer calculations of the new skein link invariants need to be done in order to check their topological strength.

\item  Just as the invariant $\Theta(q,\lambda,E) = P[P]$ is related to the Yokonuma--Hecke algebra and to the algebra of braids and ties \cite{AJ1}, it would be interesting to see the invariants $D[D]$ and $K[K]$, or rather $Y[Y]$ and $F[F]$ defined in (\ref{yzfromdt}) and (\ref{fsfromkq}), related to some knot algebras, such as the framization of the BMW algebra proposed in \cite{jula4,jula5}. See also \cite{AJ3}.

\item The categorification of the new skein invariants is another interesting problem and, for the invariant $\theta(q,E)$, which generalizes the Jones polynomial and is a specialization of $\Theta(q,z,E)$ \cite{goula2}, it is the object of the ongoing research project \cite{cgkl}.

\item The invariants in the present paper can be formulated in terms of the invariants of tied links, using the methods of Aicardi and Juyumaya \cite{AJ2,AJ3}. This will be investigated in a subsequent paper. 

\item For a given link diagram $L$ and choice of skeining $S$ we arrive at collection of stacks of knot diagrams. Let $N(L,S)$ denote the total number of knot diagrams running over all the stacks for this choice of skeining. Let $N(L,S)$  denote the minimum of $N(L,S)$  over all choices of skeining for the diagram L. Let $N[L]$ denote the minimum of $N(L,S)$  over all diagrams regularly isotopic to $L$ ( or ambient isotopic if we want). Then $N(L,S)$  is an invariant (of regular isotopy) of $L$ associated with this skein process. We would like to know more about this invariant. Another interesting feature is the least number of mixed crossing switches occuring in a given skein tree, minimized over all diagrams. This is also an invariant of the link.

\item  One avenue for further research is to extend the approach of the present paper to the area of skein modules and invariants of links in three-manifolds, and invariants of three-manifolds (see for example \cite{Tu,P,HK,La1,La2,GM,DL3,chpa2,fljula}). 

\end{enumerate} 

We further contemplate how these new ideas can be applied to physical situations.
We present these indications of possible applications here with the full intent to pursue them in subsequent publications.

\begin{enumerate}
\item   
In \cite{kaula2} we give state summation models for the new invariants, based on the skein process for computing them. That is, we use the skein process that first unlinks links and then computes invariants on stacks of knots. We systematize this process and write it as a state summation that is a version of the skein template algorithm explained in \cite{kau6}. The interest in rewriting as a state summation is that we can then interface our work with ideas from both statistical mechanics and from state models for knot invariants. We also can examine how this state summation process with its multiple levels may be analogous to the way certain physical systems have structural levels. These matters are discussed in detail in \cite{kaula2}.
We will return to these themes in subsequent papers on the subject. 

\item Reconnection (in vortices). In a knotted vortex in a fluid or plasma (for example in solar flares)  \cite{Irv} one has a cascade of changes in the vortex topology as strands of the vortex undergo reconnection (see \cite{RL}). The process goes on until the vortex has degenerated into a disjoint union of unknotted simpler vortices. This cascade or hierarchy of interactions is reminiscent of the way the skein template algorithm proceeds to produce unlinks.
Studying reconnection in vortices may be facilitated by making a statistical mechanics summation related to the cascade. Such a summation will be analogous the state summations we have described here.

\item In DNA, strand switching using topoisomerase of types I and II is vital for the structure of DNA recombination and DNA replication \cite{Sumners}. The mixed interaction of topological change and physical evolution of the molecules in vitro may benefit from a mixed state summation that averages quantities respecting the hierarchy of interactions.

\item Remarkably, the process of separation and evaluation that we have described here is analogous to proposed processing of Kinetoplast DNA \cite{Kinetoplast} where there are huge links of DNA circles and these must undergo processes that both unlink them from one another and produce new copies for each circle of DNA. The double-tiered structure of DNA replication for the Kinetoplast appears to be related
to the mathematical patterns of our double state summations. For chainmail DNA, if the reader examines the Wiki on Kinetoplast DNA, he/she will 
note that  Topoisomerase II figures crucially in the self-replication \cite{Kineto}.

\item We would like to know whether we could have physical situations that would have the kind of a mixture that is implicit in the state summation for $H[R]$, where the initial skein template state sum yields a sum over
$R$-evaluations, and $R$ may itself have a state summation structure \cite{kaula2}. One possible example in the physical world is a normal statistical mechanical situation, where one can have multiple types of materials, all present together, each having different energetic properties. This can lead to a mixed partition function, possibly not quite ordered 
in the fashion of our algorithm. This would involve a physical hierarchy of interactions so that there would be a double (or multiple)  tier resulting from that hierarchy.

\item Mixed state models can occur in physical situations when we work with systems of systems. There are many examples of this multiple-tier situation in systems physical and biological. We look for situations where a double state sum would yield new information. For example, in a quantum Hall system \cite{Haldane}, the state of the system is in its quasi-particles, but each quasi-particle is itself a vortex of electrons related to a magnetic field line. So the quasi-particles are themselves localized physical systems. Some of this is summarized in the Laughlin wave function for quantum Hall \cite{Haldane}. Not a simple situation, but a very significant one. There should be other important examples.
\end{enumerate}

\end{document}